\newtheorem{theorem}{Theorem}[section]
\newtheorem{conjecture}[theorem]{Conjecture}
\newtheorem{statement}[theorem]{Statement}
\newtheorem{Theorem}{Theorem}[section]
\newtheorem{Claim}[Theorem]{Claim}
\newtheorem{Lemma}[Theorem]{Lemma}
\newtheorem{Proposition}[Theorem]{Proposition}
\newtheorem{Corollary}[Theorem]{Corollary}
\newtheorem{definition}{Definition}
\DeclareMathOperator{\Spec}{Spec}
\DeclareMathOperator{\End}{End}
\DeclareMathOperator{\Der}{Der}
\DeclareMathOperator{\hol}{hol}
\DeclareMathOperator{\an}{an}
\DeclareMathOperator{\rig}{rig}
\DeclareMathOperator{\irr}{irr}
\newcommand{\Cbar}{\overline{\mathcal{C}}}
\newcommand{\Co}{\mathcal{C}^o}
\newcommand{\N}{\mathbb{N}}
\newcommand{\Q}{\mathbb{Q}}
\newcommand{\Z}{\mathbb{Z}}
\newcommand{\C}{\mathbb{C}}
\newcommand{\F}{\mathbb{F}}
\newcommand{\Xr}{\mathcal{X}_{r_1,r_2}}
\newcommand{\Yr}{\mathcal{X}_{r'_1,r'_2}}
\newcommand{\X}{\mathcal{X}}
\newcommand{\Or}{\mathcal{O}^{\rig}}
\newcommand{\bOr}{\mathcal{O}^{\rig}_{\le 1}}
\newcommand{\GL}{\rm{GL}}
\newcommand{\p}{\mathbb{P}}
\newcommand{\M}{\mathcal{M}}
\newcommand{\A}{\mathcal{A}}
\newcommand{\Mg}{\mathcal{M}_{g}}
\newcommand{\Mgd}{\mathcal{M}_{g,d}}
\newcommand{\Mgbar}{\overline{\mathcal{M}_{g}}}
\newcommand{\Mgdbar}{\overline{\mathcal{M}_{g,d}}}
\newcommand{\Modbar}{\overline{\mathcal{M}_{0,d}}}
\newcommand{\Mod}{\mathcal{M}_{0,d}}
\title{The $p$-curvature conjecture and monodromy about simple closed loops}
\author{Ananth N. Shankar}
\date{\today}
\begin{document}
\maketitle
\begin{abstract}
The Grothendieck-Katz $p$-curvature conjecture is an analogue of the Hasse Principle for differential equations. It states that a set of arithmetic differential equations on a variety has finite monodromy if its $p$-curvature vanishes modulo $p$, for almost all primes $p$. We prove that if the variety is a generic curve, then every simple closed loop on the curve has finite monodromy. 
\end{abstract}
\tableofcontents

\section{Introduction}
The Grothendieck-Katz $p$-curvature conjecture posits the existence of a full set of algebraic solutions to arithmetic differential equations in characteristic 0, given the existence of solutions after reducing modulo a prime for almost all primes. 

More precisely, let $R \subset \C$ be a finitely generated $\Z$-algebra. Suppose that $X$ is a smooth connected variety over $\Spec R$ and let $(V,\nabla)$ be a vector bundle on $X$ equipped with a flat connection. For almost every prime number $p$, we can reduce $X, V$ and $\nabla$ to obtain a vector bundle with connection on the $\Spec R/p$ scheme, $X/p$. Associated to such a system is an invariant $\psi_p$, its $p$-curvature, whose vanishing is equivalent to the existence of a full set of solutions to $\nabla$ modulo $p$. 

\begin{conjecture}[Grothendieck-Katz $p$-curvature conjecture]
Suppose that for almost all primes $p$, the $p$-curvature of $(V/p,\nabla/p)$ vanishes. Then the differential equation $(V,\nabla)$ has a full set of algebraic solutions, i.e. it becomes trivial on a finite etale cover of $X$. 
\end{conjecture}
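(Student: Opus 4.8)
The plan is to combine two inputs, one arithmetic and one geometric. The arithmetic input is that vanishing of the $p$-curvature for almost all $p$ forces $(V,\nabla)$ to be globally nilpotent, hence (Katz, with Chudnovsky--Andr\'e) regular singular with rational exponents and quasi-unipotent local monodromy at each of its singular points. The geometric input is that on a generic curve any essential simple closed loop $\gamma$ (the nullhomotopic case being trivial) is a vanishing cycle: degenerating the curve so as to pinch $\gamma$ trades $\rho(\gamma)$ --- which lives on a smooth curve, where the previous sentence says nothing --- for the \emph{local} monodromy of a limiting connection about a node, where it says quite a lot. The remaining task is then to upgrade ``quasi-unipotent'' to ``finite''.

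In detail: after replacing $R$ by a suitable finitely generated subring, take $X$ to be the generic fibre of the universal curve over $\Mgn$ for a fine level structure, and spread $(V,\nabla)$ out to a connection flat relative to a dense open $U\subseteq\Mgn$ on the universal curve over $U$. Given $\gamma$, pick a one-parameter family of curves whose generic member is $X$ and whose special member is nodal, the node being the image of $\gamma$; the normalisation of the special fibre has strictly smaller complexity --- genus $g-1$ with two marked points when $\gamma$ is non-separating, a disjoint union of two pointed curves of positive genus $<g$ when $\gamma$ separates --- and (granting that the connection extends over this degeneration) it carries a limiting $p$-curvature-zero connection whose monodromy about a preimage of the node is $\rho(\gamma)$. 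Inducting on the pair (genus, number of marked points), one reduces to the punctured projective line, where pinching a simple loop splits off two punctured lines with fewer punctures; the induction terminates at $\p^1$ minus two points --- the $\mathbb{G}_m$ case of the conjecture, which is known since the monodromy is abelian --- while any intermediate piece that is rigid or a subquotient of a Gauss--Manin connection is handled outright by Katz's theorem. At each stage $\rho(\gamma)$ appears as the local monodromy of an arithmetic, globally nilpotent connection, hence is quasi-unipotent, and running the induction down to the abelian base case (or to a Katz case) forces it to be of finite order.

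The clause I expect to be the real obstacle is ``granting that the connection extends over this degeneration'': one needs $(V,\nabla)$ to be defined, with vanishing $p$-curvature, over a neighbourhood of a generic point of the boundary stratum in play, not merely over the interior of moduli. This is precisely a statement about the isomonodromic deformation of $(V,\nabla)$ as the curve varies --- a deformation that exists a priori only analytically and may be singular along a Malgrange-type theta divisor --- so one must show it is algebraic over a dense open of $\Mgbar$ and propagates the $p$-curvature hypothesis to the boundary. This is where genericity of $X$ is spent: the locus where the extension fails is a proper closed subset, which a very general curve avoids. I would try to make this effective using the level structures (passing to deeper \'etale covers on which the mod-$p$ reductions trivialise) together with the bounded denominators of horizontal sections ($G$-function estimates) supplied by the hypothesis; and I anticipate that a genuinely new idea is required at the final step, where the quasi-unipotence handed to us by global nilpotence must be promoted to finiteness exactly when a surviving logarithmic term would otherwise obstruct the vanishing of the $p$-curvature of the limit.
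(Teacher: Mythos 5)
The statement you are proving is the Grothendieck--Katz conjecture itself, which this paper does \emph{not} prove: it is stated only as background, and it remains open. What the paper actually establishes (Theorem \ref{gen}) is the strictly weaker assertion that on a generic curve every \emph{simple closed loop} has finite image in the monodromy representation. Your proposal, even granting every step you flag as an obstacle, lands on exactly that weaker statement and no more. The gap is group-theoretic and unavoidable by this route: $\pi_1$ of a genus-$g$ curve is generated by simple closed loops, but a subgroup of $\GL_n(\C)$ generated by finitely many finite-order elements need not be finite, and a general element of $\pi_1$ is not represented by a simple loop, so it cannot be pinched to a node. Your induction also misstates its own base case: pinching simple loops on a punctured $\p^1$ never reduces the \emph{conjecture} to $\mathbb{G}_m$ --- the conjecture is open already for $\p^1\setminus\{0,1,\infty\}$ --- it only reduces the finiteness of the particular class $\rho(\gamma)$ to a local-monodromy statement at the resulting node. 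Likewise, ``quasi-unipotent'' from global nilpotence plus an induction does not upgrade to ``finite''; the paper gets finiteness by a genuinely different mechanism.

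Where your proposal does track the paper is the Emerton--Kisin idea of trading global monodromy of $\gamma$ for local monodromy at a node of a degeneration, and you correctly identify the extension of $(V,\nabla)$ across the boundary of moduli as the technical crux. But the paper does not approach this through isomonodromic deformation or Malgrange's theta divisor. Instead it completes along the boundary divisor, obtaining a connection on a rigid-analytic annulus over $K((q))$, and proves a rigid-analytic $p$-curvature theorem there (Theorem \ref{thm:use}): the vanishing of almost all $p$-curvatures forces the connection matrix into the bounded subring $\bOr(\X)$, i.e.\ the connection extends to an integral model whose special fibre is $\mathbb{G}_m$, and after finite pullback it admits a full basis of rigid solutions, built by an inductive $q$-adic approximation whose convergence is controlled by a non-archimedean three-circle estimate. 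A separate holomorphic argument (Proposition \ref{csol}) then converts these rigid solutions into actual flatness of the monodromy of the vanishing loop in the complex family. If you want to salvage your write-up, you should restate your goal as Theorem \ref{gen} rather than the conjecture, and replace the isomonodromy step by an argument of this rigid-analytic type (or supply one of your own that actually produces the extension and the finiteness, not merely quasi-unipotence).
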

Katz in \cite{Katz} proved that if the $p$-curvatures vanish for almost all $p$, then $(V,\nabla)$ has regular singular points, so that the conclusion of the conjecture is equivalent to asking that the monodromy representation of $(V,\nabla)$ is finite. Moreover, he showed that $(V,\nabla)$ has finite local mondromy. More precisely, if $X \hookrightarrow \overline{X}$ is a smooth compactification of $X$ with $\overline{X} \setminus X$ a normal crossings divisor, Katz proved that the monodromy about a boundary component is finite. 

The main theorem of this paper is:  
\begin{Theorem}\label{gen}
Let $(V,\nabla)$ be a vector bundle with connection on a generic proper smooth curve of genus $g$, such that almost all $p$-curvatures vanish. Then the monodromy about any simple closed loop is finite. 
\end{Theorem}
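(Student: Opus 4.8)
The plan is to use the genericity of the curve to spread the connection out to a family over the moduli space $\Mg$, then degenerate the curve so that the prescribed simple closed loop $\gamma$ becomes the vanishing cycle at a boundary divisor of $\Mgbar$. This will identify the monodromy about $\gamma$ with a \emph{local} monodromy of the degenerate fibre, which is finite by Katz's theorem recalled above. Concretely: since $C$ is generic it is the pullback of the universal curve under a morphism $\Spec K \to \Mg$ onto the generic point, for $K$ a finitely generated field of characteristic $0$; after adding level structure we may take $\Mgn$ to be a fine moduli space with universal curve $\pi\colon \mathcal{C}\to\Mgn$, so $C=\mathcal{C}_\eta$. As $(V,\nabla)$ is defined over $K = K(\Mgn)$ it spreads out: over a dense open $U\subseteq\Mgn$ there is a vector bundle with \emph{relative} flat connection $(V,\nabla)$ on $\pi^{-1}(U)$ restricting to the original connection on $\mathcal{C}_\eta$, and — vanishing of $p$-curvature being a closed condition that holds over the generic point for almost all $p$ — after shrinking $U$ the relative $p$-curvature vanishes modulo $p$ for almost all $p$.

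Next comes the degeneration. Up to a self-homeomorphism of the underlying topological surface $\Sigma_g$, every isotopy class of simple closed curve is the vanishing cycle of a standard one-parameter nodal degeneration: the nonseparating case corresponds to the boundary divisor $\delta_0\subset\Mgbar$, and a curve separating $\Sigma_g$ into genera $h$ and $g-h$ corresponds to $\delta_h$. Fix the relevant divisor $\delta$ and a general point $s_0\in\delta$ — in particular one lying over a locus where, possibly after a finite base change near $s_0$ and a resolution, $\mathcal{C}$ and $(V,\nabla)$ extend; such $s_0$ exists because extension fails only over a proper closed subset of $\Mgbar$, which cannot contain a general point of the divisor $\delta$. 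The fibre $C_0$ over $s_0$ is stable with a single node; let $\nu\colon \tilde C_0\to C_0$ be its normalization and $P,Q$ the two preimages of the node. Taking Deligne's canonical extension of $(V,\nabla)$ near the node, restricting the extended relative connection to $C_0$, and pulling back along $\nu$ yields a vector bundle with an \emph{absolute} connection $(\overline V,\overline\nabla)$ on $\tilde C_0$ with logarithmic poles along $\{P,Q\}$.

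Now apply Katz. Granting that $(\overline V,\overline\nabla)$ still has vanishing $p$-curvature modulo $p$ for almost all $p$, Katz's theorem applied to $\tilde C_0\setminus\{P,Q\}\hookrightarrow\tilde C_0$ with its normal crossings boundary shows that the local monodromy of $\overline\nabla$ about $P$ is finite. It remains to identify this local monodromy with the monodromy $\rho(\gamma)$ about $\gamma$ in the original smooth fibre: in a bidisc $\{xy=t\}$ transverse to the node, the fibre over $t\neq 0$ is an annulus whose core represents the vanishing cycle $\gamma$; the conjugacy class of the monodromy about this core is independent of $t$ (the Picard–Lefschetz transformation $T_\gamma$ fixes $[\gamma]$) and equals, in the limit $t\to 0$, the monodromy about $P$ of the branch $\{y=0\}$ of $C_0$. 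Hence $\rho(\gamma)$ has finite order, and replacing $\gamma$ by its homeomorphic image handles an arbitrary simple closed loop.

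The main obstacle is the claim used in the previous paragraph: that passing to the nodal fibre does not destroy the vanishing of the $p$-curvature in a form usable by Katz. One must control the interaction of Deligne's canonical extension over $\Mgbar$ with reduction modulo $p$, and in particular verify that the residue of the extended connection along the node is nilpotent modulo $p$ for almost all $p$, so that $(\overline V,\overline\nabla)$ is globally nilpotent and Katz's finite local monodromy conclusion genuinely applies. This cannot be circumvented by instead extending $(V,\nabla)$ to an \emph{absolute} flat connection on $\pi^{-1}(U)$: an isomonodromic extension over open subsets of $\Mg$ exists only when the conjugacy class of $\rho$ is invariant under a finite index subgroup of the mapping class group, which for $g\geq 2$ would already force $\rho$ to have finite image — far more than the theorem asserts. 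The point of working near the boundary is precisely that there the obstruction to a compatible extension disappears, while the vanishing of the relative $p$-curvature survives restriction to the special fibre.
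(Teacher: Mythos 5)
Your global strategy --- spread the connection over a fine cover of $\Mg$, use transitivity of the mapping class group on isotopy classes of simple closed curves to realize $\gamma$ as the vanishing cycle of a one-parameter degeneration to a boundary divisor of $\Mgbar$, and then read off the monodromy about $\gamma$ as a ``local'' monodromy at the node --- is exactly the paper's reduction of Theorem \ref{gen} to its main technical result (Theorem \ref{thm:tech}). But the step you flag as ``the main obstacle'' is not a technical verification to be granted; it is the entire content of the paper, and the route you propose for it (Deligne's canonical extension of the relative connection across the nodal fibre, restriction to the normalization, then Katz's local monodromy theorem at the preimages of the node) does not go through. Two things fail. First, the existence of a logarithmic extension of the \emph{relative} connection across the nodal fibre is not free: near the node the family is $xy=q^a$, and in the natural coordinate $t$ on the vanishing annulus the connection matrix is a priori an infinite-tailed Laurent series in $t$ (an element of the affinoid ring of a rigid annulus over $K((q))$), not a finite-order pole. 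The paper explicitly notes that Katz's local monodromy theorem cannot be applied for precisely this reason, and proving that the connection does extend to an integral model with special fibre $\mathbb{G}_m$ (Theorem \ref{thm:use}, part 1) is itself a nontrivial argument that \emph{uses} the vanishing of the $p$-curvatures, via a norm-growth/eigenvalue estimate on the rigid annulus. Second, even granting an extension, you would still need the restricted connection on $\tilde C_0\setminus\{P,Q\}$ to be globally nilpotent, and no mechanism is offered for transferring the $p$-curvature hypothesis from the smooth fibres to that limit object.

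The paper fills this gap by a genuinely different mechanism: it proves a rigid-analytic $p$-curvature theorem over the annulus $\X$ over $K((q))$ (Theorem \ref{thm:use}), producing, after finite pullback, a full basis of flat sections over the bounded-by-one subring $\bOr(\X)$ by an inductive trivialization modulo powers of $q$; it then shows (Proposition \ref{csol}) that such rigid solutions force the holomorphic family of annuli degenerating to the node to have trivial monodromy, by comparing monodromy matrices modulo $q^m$ for all $m$. So the conclusion is not obtained from Katz's theorem on the special fibre at all, but from the fact that the monodromy about the vanishing cycle is \emph{trivial} after finite pullback. As written, your proposal correctly reproduces the reduction steps but leaves the theorem's essential difficulty unresolved.
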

Here, by generic, we mean that the image of the map from $\Spec R$ to $\Mg$ (the moduli space of genus $g$ curves) contains the generic point of $\Mg$. We are also able to prove results for open curves:

\begin{Theorem}\label{P1}
Let $(V,\nabla)$ be a vector bundle with connection on $\mathbb{P}^1 \setminus \{Q_1, \hdots Q_d\}$ where $\{Q_1 \hdots Q_d\}$ is a generic set of $d$ points, $d >2$. If the $p$-curvatures vanish for almost all primes $p$, then the monodromy about any simple closed loop is finite.  
\end{Theorem}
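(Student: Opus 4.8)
The plan is to degenerate the configuration of the $d$ points so that the given simple closed loop is realized, up to conjugacy, as a product of commuting meridians of boundary components of a suitable compactification, and then to invoke Katz's finiteness theorem for local monodromy. Fix a simple closed loop $\gamma$ on a fibre $X=\p^1\setminus\{Q_1,\dots,Q_d\}$. Since $X$ is planar, $\gamma$ separates the punctures into two subsets $S$ and $S^c$, and on a punctured sphere the isotopy class of a simple closed loop is determined by the partition of the punctures it induces. If $|S|\le 1$ then $\gamma$ is nullhomotopic or freely homotopic to a small loop around a single puncture, and its monodromy is finite directly by Katz's theorem (recalled in the introduction); hence we may assume $2\le|S|\le d-2$, which forces $d\ge 4$, the case $d=3$ being covered by the previous sentence. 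The partition $\{S,S^c\}$ determines the boundary divisor $D_S\subset\Modbar$ whose generic point parametrizes the nodal genus-$0$ curve $C_S\cup_{n_0}C_{S^c}$ --- the points of $S$ on $C_S$, those of $S^c$ on $C_{S^c}$, the two components meeting at a node $n_0$ --- and the vanishing cycle of this degeneration is exactly the isotopy class of $\gamma$.

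The second step is to realize this degeneration over a base on which $(V,\nabla)$ still lives. Since the $Q_i$ are generic the classifying morphism $\Spec R\to\Mod$ is dominant, so after resolving indeterminacy a projective model $\overline{\Spec R}$ of $\Spec R$ carries a surjective morphism to $\Modbar$; choosing a general complete curve $B\subseteq\overline{\Spec R}$ meeting the preimage of $D_S$ transversally at a general point $b_0$ of it, we get $B^\circ:=B\cap\Spec R$, a dense open curve of $B$ that is contained in $\Spec R$, with $b_0\notin B^\circ$ mapping to a general point of $D_S$. After a finite base change, which does not affect the monodromy around $\gamma$, we may assume the pullback along $B\to\Modbar$ of the universal stable $d$-pointed genus-$0$ curve has smooth total space near $n_0$, with local equation $xy=t$ there. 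Deleting the $d$ marked sections, resolving, and compactifying yields a smooth variety $\mathcal Z$ with normal crossings boundary, fibred over $B$, whose fibre over $b_0$ is the normal crossings curve $C_0=C_S\cup C_{S^c}$; writing $U\subseteq\mathcal Z$ for the complement of the sections, of $C_0$, and of all non-smooth fibres, the connection $(V,\nabla)$ restricts to $U$ --- which over $B^\circ$ agrees with the given family --- and still has vanishing $p$-curvature for almost all $p$, while $C_S$ and $C_{S^c}$ are among the boundary components $\mathcal Z\setminus U$ and meet transversally at $n_0$.

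Now I would apply Katz's theorem to $U\hookrightarrow\mathcal Z$: the monodromy $T_S$ of $(V,\nabla)|_U$ about the boundary component $C_S$ and the monodromy $T_{S^c}$ about $C_{S^c}$ are each of finite order. For $b\in B^\circ$ close to $b_0$ the loop $\gamma\subset X_b$ lies in the thin neck $\{xy=t(b)\}$ near $n_0$, and is therefore freely homotopic in $U$ to a loop in the local complement $\{xy\neq 0\}$ of $C_0$ about the node, whose fundamental group is $\Z^2$, generated by the meridians of $C_S$ and of $C_{S^c}$; parametrizing the neck by $x$ shows that the class of $\gamma$ there is the difference of the two meridian classes. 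Hence the monodromy of $(V,\nabla)$ around $\gamma$ is conjugate to $T_{S^c}T_S^{-1}$; since $T_S$ and $T_{S^c}$ are images of elements of the abelian group $\Z^2$ they commute, and a product of two commuting elements of finite order has finite order. Therefore the monodromy around $\gamma$ is finite, which is the claim of Theorem~\ref{P1}.

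The step I expect to be the main obstacle is the geometric one of the second paragraph: producing the compactification $\mathcal Z$ so that it is smooth with normal crossings boundary, so that $(V,\nabla)$ still lives on $U$ with vanishing $p$-curvature, and --- crucially --- so that $\gamma$ is correctly identified with the vanishing cycle at $n_0$, the accompanying resolutions and base changes being routine but needing care. Once this geometric input is secured, the finiteness of the monodromy around $\gamma$ is forced by Katz's theorem together with the commutativity of the local fundamental group $\Z^2$ of the complement of a node. This is exactly the point at which the genus-$0$ situation is more tractable than the setting of Theorem~\ref{gen}: on a punctured sphere every simple closed loop is separating, its isotopy class is determined by the partition it induces, and every such partition is realized by a boundary divisor of $\Modbar$, so one never has to confront the non-separating loops, whose vanishing cycles sit in an irreducible nodal fibre and which agree with a prescribed loop only after transporting it around $\Mg$.
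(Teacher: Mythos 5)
Your first paragraph follows the same opening move as the paper: a simple closed loop on a punctured sphere determines a partition of the punctures, and the corresponding boundary divisor of $\Modbar$ is where the loop gets pinched. But the third paragraph has a fatal gap. The connection $(V,\nabla)$ in Theorem \ref{P1} is a connection \emph{relative to the base} $\Spec R$ (equivalently, relative to $B$ after your reductions): it only differentiates along the fibres, and the hypothesis is vanishing of the \emph{relative} $p$-curvatures. Katz's local monodromy theorem applied to $U\hookrightarrow\mathcal Z$ requires an integrable connection on the total space $U$ (and vanishing of the $p$-curvatures of that absolute connection), which is not part of the data and cannot be manufactured from it. Concretely, the meridians of $C_S$ and $C_{S^c}$ are loops in the total space that necessarily move in the base direction (those divisors lie over the boundary point $b_0$, so no loop inside a single smooth fibre links them); a relative connection assigns no parallel transport to such loops, so the operators $T_S$ and $T_{S^c}$ are not defined, and the factorization of the monodromy of $\gamma$ as $T_{S^c}T_S^{-1}$ has no meaning. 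Likewise, the free homotopy in $U$ from $\gamma$ to a loop in the punctured neighbourhood of the node leaves the fibre, so it does not preserve fibrewise monodromy. This is exactly the difficulty the paper is built to circumvent: Sections 3--5 prove Theorem \ref{thm:tech} by first establishing a rigid-analytic $p$-curvature theorem on the annulus over $K((q))$ (Theorem \ref{thm:use}), showing the connection extends to the integral model with special fibre $\mathbb{G}_m$ and acquires rigid solutions there, and then transferring these to holomorphic solutions on the degenerating family of complex annuli (Proposition \ref{csol}). The introduction explicitly notes that Katz's local theorem does not apply in this setting, since the limiting object involves series with infinitely many negative powers of $t$ rather than poles of finite order.

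A secondary error: your claim that ``on a punctured sphere the isotopy class of a simple closed loop is determined by the partition of the punctures it induces'' is false (already on the four-punctured sphere there are infinitely many isotopy classes of essential simple closed curves, distinguished by their slopes, inducing only three partitions). What is true is that the pure mapping class group acts transitively on isotopy classes inducing a fixed partition, and that this group is a quotient of $\pi_1(\Mod)$; so one corrects the statement by modifying the path to the boundary divisor by a suitable loop in the base, which is how the paper arranges for the given $\gamma$ (rather than merely some loop with the same partition) to contract to the node. This part is repairable; the misuse of Katz's theorem on the total space is not, and repairing it amounts to proving Theorem \ref{thm:tech}.
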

We note that Theorem \ref{P1}, when applicable, generalises Katz's local monodromy theorem. Katz proves that any simple closed loop which encloses exactly one point has finite order. However, nothing is said about loops which enclose more than just a single point. Theorem \ref{P1} shows that any simple closed loop, though it may bound several points, has finite order in the monodromy representation, so long as the points are generic. 

Another result, which combines Theorems \ref{gen} and \ref{P1} is

\begin{Theorem}\label{genpts}
Let $(V, \nabla)$ be a vector bundle with connection on the generic $d$-punctured curve. If the $p$-curvatures vanish for almost all primes $p$, then the monodromy about every simple closed loop is finite. 
\end{Theorem}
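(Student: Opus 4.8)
The plan is to run the arguments of Theorems~\ref{gen} and \ref{P1} simultaneously over the moduli space $\Mgd$ of $d$-pointed genus-$g$ curves. By the genericity hypothesis we may assume that $X$ is the generic fibre of the universal $d$-punctured curve $\Co \to \Mgd$, with $\nabla$ a flat relative connection whose $p$-curvatures vanish for almost all $p$; it then suffices to prove that for a very general $\C$-point $s$ and every simple closed loop $\gamma$ in the fibre $C^{o}_{s}$, the monodromy of $(V,\nabla)$ about $\gamma$ has finite order.

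The crucial step is to extend $(V,\nabla)$ to an \emph{algebraic} isomonodromic deformation: a flat connection $\widetilde\nabla$ on $\Co|_{U}$, for some dense open $U\subseteq\Mgd$, restricting to $\nabla$ on the generic fibre, with $p$-curvatures that still vanish for almost all $p$, and isomonodromic in the sense that the monodromy of $\widetilde\nabla$ about any loop contained in a single fibre coincides with that of $\nabla$. This is where the two earlier theorems are genuinely combined: deformations moving the complex structure of the curve are handled as in Theorem~\ref{gen}, and deformations moving the punctures as in Theorem~\ref{P1}, the induction being organised along the tower $\Mgd\to\mathcal{M}_{g,d-1}\to\cdots\to\Mg$. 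Since the isomonodromic deformation of a connection is transcendental in general (Painlev\'e), the vanishing of the $p$-curvatures is essential here, and I expect this to be the main obstacle.

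Granting $\widetilde\nabla$, fix a very general $s$ and a simple closed loop $\gamma\subset C^{o}_{s}$. If $\gamma$ is null-homotopic there is nothing to prove, and if $\gamma$ is freely homotopic to a small loop around a puncture then Katz's local monodromy theorem applies directly. Otherwise pinching $\gamma$ yields a stable $d$-pointed nodal curve, so there is a holomorphic disc $\Delta\to\Mgdbar$ meeting the boundary transversally at $0$, with smooth general fibre carrying a curve isotopic to $C_{s}$ together with $\gamma$, and with $\gamma$ the vanishing cycle at $0$; lifting to $\overline{\mathcal{M}_{g,d+1}}\supseteq\overline{\Co}$ and choosing the extra marked point away from $\gamma$ gives a corresponding degeneration of the total space on which $\widetilde\nabla$ lives. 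Near the node the total space is smooth, with local model $\{xy=t\}$ where $t$ is the coordinate on $\Delta$, and the boundary pulls back to $\{x=0\}\cup\{y=0\}$ --- two local branches of a boundary divisor of $\overline{\mathcal{M}_{g,d+1}}$, which become smooth after a log-resolution. In $\pi_{1}$ of the local smooth-fibre complement $(\C^{*})^{2}$ the vanishing cycle $\gamma$ is the class $\alpha\beta^{-1}$, with $\alpha,\beta$ the small loops around $\{x=0\}$ and $\{y=0\}$. As the $p$-curvatures of $\widetilde\nabla$ vanish, Katz's local monodromy theorem makes its monodromies $A$ and $B$ about these two branches of finite order; since $\{x=0\}$ and $\{y=0\}$ cross normally, $A$ and $B$ commute, so the monodromy of $\widetilde\nabla$ about $\gamma$, namely $AB^{-1}$, lies in the finite group $\langle A,B\rangle$. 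By isomonodromy this equals the monodromy of $(V,\nabla)$ about $\gamma$, which is therefore finite.

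As noted, the serious difficulty is the first step --- constructing the isomonodromic deformation algebraically and over a Zariski-dense open of $\Mgd$ (not merely analytically over a disc), and verifying that the $p$-curvatures are inherited; this is precisely where the methods of Theorems~\ref{gen} and \ref{P1} must be fused. Alternatively, one can try to avoid a global extension and instead, for each topological type of $\gamma$, degenerate so that one component of the nodal fibre is either a lower-genus punctured curve or a copy of $\p^{1}$ with a generic configuration of points, and feed the restriction of the connection into an inductive form of the present theorem or into Theorem~\ref{P1}; the bookkeeping is then the content of the proof. The remaining care is purely topological: stability of the pinched curve, the degenerate cases where $\gamma$ bounds zero or one punctures, transversality to the correct boundary stratum, and the log-resolution needed to apply Katz's theorem at the self-intersections of the boundary of $\overline{\mathcal{M}_{g,d+1}}$.
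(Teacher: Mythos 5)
Your topological reduction (classify $\gamma$ up to the mapping class group, handle the null-homotopic and single-puncture cases separately, and pinch $\gamma$ along a disc meeting the right boundary stratum of $\Mgdbar$) matches the paper's in spirit. But the analytic core of your argument has a genuine gap, and it is exactly the step you flag as ``the main obstacle.'' Your plan is to apply Katz's local monodromy theorem to the connection on the \emph{total space} of the universal curve near the node $\{xy=t\}$, writing the vanishing cycle as $\alpha\beta^{-1}$ and using finiteness and commutativity of the local monodromies $A,B$ about the two boundary branches. For this to make sense you need an \emph{integrable} (absolute) connection on the total space: the hypothesis only gives a connection relative to the base, whose monodromy is defined on $\pi_1$ of the fibres, not on $\pi_1$ of the local complement $(\Delta^*)^2$. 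Supplying that integrable extension is precisely your ``algebraic isomonodromic deformation with vanishing $p$-curvatures,'' which you do not construct; isomonodromic deformations are governed by Schlesinger/Painlev\'e-type equations and are transcendental in general, and producing one that is algebraic, spread out over a finitely generated $\Z$-algebra, and still has vanishing $p$-curvatures is at least as hard as the theorem itself (indeed, if the monodromy is finite such an extension exists, so the step is close to circular). Without it, the appeal to Katz does not get off the ground.

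The paper avoids this entirely by working with the relative connection as given. Near the node it completes along the special fibre to get a rigid-analytic annulus over $K((q))$, proves a rigid-analytic $p$-curvature theorem (Theorem \ref{thm:use}: the connection extends to the integral model with special fibre $\mathbb{G}_m$ and acquires a full basis of bounded rigid solutions after finite pullback), transfers these rigid solutions to the holomorphic family of annuli via Proposition \ref{csol} to kill the monodromy of the vanishing cycle near the boundary, and then propagates finiteness to an arbitrary smooth fibre by holomorphicity (Lemma \ref{forfutureuse} and Proposition \ref{futureuse}) rather than by isomonodromy. Note also the paper's explicit warning that Katz's local theorem cannot be substituted for this rigid computation, since the relevant singularities involve Laurent series with infinitely many negative powers of $t$ rather than poles of finite order. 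If you want to salvage your argument, you should replace the isomonodromy step with this relative, rigid-analytic analysis; as written, the proof is incomplete at its central point.
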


We prove Theorems \ref{gen}, \ref{P1} and \ref{genpts} by using a beautiful suggestion of Emerton and Kisin, which allows us to view global monodromy as local monodromy. Given a family of smooth algebraic curves degenerating to a nodal curve, the singular curve can be realised by contracting an appropriately chosen simple closed loop in one of the smooth fibers. Indeed, locally about the node, we get a family of holomorphic annuli degenerating to the node. The annuli approximate a punctured disc, and this approximation gets better and better as one approaches the node. 

With this idea as our guiding principle, we prove that when given a vector bundle on the smooth part of such a family with a connection relative to the base, the $p$-curvatures vanishing imply the finiteness of monodromy about the vanishing loop. The precise result is stated as Theorem \ref{thm:tech}. The key step is to prove a rigid-analytic version of the $p$-curvature conjecture: 
suppose $S$ is a finitely-generated $\Z$-algebra, and let $(V,\nabla)$ be a vector bundle with connection on a closed rigid-analytic annulus $\X$ (with coordinate $t$) over $S((q))$. We also assume that $(V,\nabla)$ has a cyclic vector. If the $p$-curvatures vanish for almost all primes $p$, we prove that $(V,\nabla)$ pulled back to $\X \times_{S((q))} K((q))$ has finite finite monodromy, where $K$ is the fraction field of $S$. 
The exact result we prove is stated as Theorem \ref{thm:use}. 

This allows us to prove that $(V,\nabla)$ pulled back to the family of holomorphic annuli has finite monodromy, thereby completing the proof of Theorem \ref{thm:tech}. We remark that this is an instance where the theory of rigid-analytic varieties over an equicharacteristic-zero base is used to prove a result in arithmetic. 

We note that Katz's theorem on local monodromy cannot be applied to prove Theorem \ref{thm:use}. Indeed, in Katz's setting, all the singularities are poles of finite order (in $t$), whereas we have to deal with series in $t$ with infinitely many negative powers. A key step is to prove that $(V,\nabla)$ extends to an integral model of $\X$ which has special fiber $\mathbb{G}_m$. 

The paper is organised as follows. In \S 2, we state Theorem \ref{thm:tech} precisely and infer from it Theorem \ref{gen}. We spend Sections 3--5 proving Theorem \ref{thm:tech}. In \S3, we formulate and prove Theorem \ref{thm:use}, and a more general result. In \S4, we give a rigid criterion for a family of connections on holomorphic annuli to have solutions. In \S5, we put together the results in the previous two sections to finish the proof of Theorem \ref{thm:tech}. Finally in \S6, we demonstrate the proof of the other applications of Theorem \ref{thm:tech}.

\subsection*{Acknowledgements}
It is a pleasure to thank my advisor Mark Kisin, for suggesting that I work on this problem, and also for many conversations and ideas. I also thank George Boxer, Anand Patel and Yunqing Tang for several helpful discussions. Finally, I thank Shiva Shankar and Yunqing Tang for reading previous versions of this paper, and for offering very helpful comments. 

\section{Statement of the main result}
\subsection{$p$-curvature}
Let $R_p$ be an $\F_p$-algebra, and let $X$ be a scheme smooth over $\Spec R_p$. Suppose that $(V,\nabla)$ is a vector bundle on $X$ with flat connection relative to $R_p$, and let $\mathcal{N}$ be its sheaf of sections. Suppose that $D$ is a section of $\Der(X/R_p)$, the sheaf of $R_p$-linear differentials on $X_p$. The data of $\nabla$ gives, for every $D$, a map 
$$\nabla(D): \mathcal{N} \rightarrow \mathcal{N}.$$
This map is $R_p$-linear, and obeys the Liebnitz rule. The $p$-curvature $\psi_p$ is defined to be the mapping of sheaves
$$\Der(X/R_p) \rightarrow \End_{\mathcal{O}_X}(\mathcal{N})$$
by setting 
$$\psi_p(D) = \nabla(D)^p - \nabla(D^p).$$
Note that $D^p$ is a derivation, as we are in characteristic $p >0$. It is easy to see that $\psi_p(D)$, which apriori lies only in $\End_{R_p}(\mathcal{N})$, is actually an element of $\End_{\mathcal{O}_X}(\mathcal{N})$. It is a well known fact due to Cartier (section 5.2 in \cite{Katz}) that $\psi_p$ being identically zero is equivalent to $\nabla$ having a full set of solutions.

\subsection{Simple closed loops}
Let $C$ be a smooth algebraic curve (not necessarily projective) over $\Spec \C$, and let $C^{\an}$ denote the associated complex analytic space. 
\begin{definition}
We say that $\gamma \subset C^{\an}$ is a simple closed loop if it is the image of an injective continuous map $f_{\gamma}: S^1 \rightarrow C^{\an}$. 
\end{definition}
Given a vector bundle $V$ with connection $\nabla$ on $C^{\an}$, we say that $\gamma$ has finite order in the monodromy representation if for some $b \in \gamma$, $f_{\gamma} \in \pi_1(C^{\an},b)$ has finite order in the monodromy representation based at $b$. Note that this definition is independent of $b \in \gamma$, and the parameterization map $f_{\gamma}$. 

\begin{definition}
Two simple closed loops $\gamma$ and $\gamma'$ of $C^{\an}$ are isotopic if there is a map $I:S^1 \times [0,1]\rightarrow C^{\an}$ with the following properties: 
\begin{enumerate}
\item $I(S^1 \times \{0\}) = \gamma$ and $I(S^1 \times \{1\}) = \gamma'$
\item $I (S^1 \times \{a\}) \rightarrow C^{\an}$ is an embedding for $a \in [0,1]$. 
\end{enumerate}
\end{definition} 

We now make precise what we mean for a simple closed loop to contract to a node in a family of curves. To that end, suppose that $\overline{B}$ is a variety over $\Spec \C$. Let $\mathcal{C} \rightarrow \overline{B}$ be a family of algebraic curves over $\overline{B}$ which is generically smooth.  Let $B \subset \overline{B}$ be the open subvariety over which $\mathcal{C}$ is smooth. Suppose that there exists a closed point $s_0 \in \overline{B} \setminus B$ such that the fiber $\mathcal{C}_{s_0}$ over $s_0$ is a nodal curve, with $P \in \mathcal{C}_{s_0}$ a node. Let $s \in B(\C)$ and suppose that $\gamma \subset \mathcal{C}^{\an}_s$ is a simple closed loop. 

\begin{definition}
We say that $\gamma$ contracts to the node $P$ if there exists a continuous map $J: S^1 \times [0,1] \rightarrow \mathcal{C}^{\an}$ such that 
\begin{enumerate}
\item For $a<1$, $J(S^1 \times a)$ is a simple closed loop contained entirely within a single smooth fiber of $\mathcal{C}^{\an}$. 
\item $J(S^1 \times \{0\}) = \gamma$ and $J(S^1 \times \{1\}) = \{P\}$.
\end{enumerate}
\end{definition}
Clearly $J$ induces a map $\beta: [0,1] \rightarrow \overline{B}^{\an}$, such that $[0,1)$ is contained in the interior $B^{\an}$. We say that $\gamma$ contracts to the node via $\beta$. It follows from the definitions that if $\gamma$ and $\gamma'$ are isotopic simple closed loops contained in some smooth fiber, $\gamma$ contracts to the node if and only if $\gamma'$ does. 

\subsection{The generic curve}
Suppose that $\overline{B}$ is a variety over a number field. Let $\mathcal{C} \rightarrow \overline{B}$ be as above. We now state the main technical result, from which Theorem \ref{gen} follows. 

\begin{Theorem}\label{thm:tech}
Notation as above. Suppose that $(V, \nabla)$ is a vector bundle on $\mathcal{C}_{|_B}$ with connection relative to $B$, such that almost all $p$-curvatures vanish. Let $s \in B(\C)$. Suppose that $\gamma\subset \mathcal{C}_s^{\an}$ is a simple closed loop which contracts to the node. Then the image of $\gamma$ in the monodromy representation has finite order. 
\end{Theorem}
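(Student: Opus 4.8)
The plan is to localize the problem near the node $P$ and reduce the finiteness of global monodromy about $\gamma$ to a question about monodromy of a connection on a rigid-analytic annulus, where Theorem \ref{thm:use} applies. Concretely, I would first spread out $\overline{B}$, $\mathcal{C}$ and $(V,\nabla)$ over a finitely generated $\Z$-algebra $R$, shrinking $\overline{B}$ if necessary so that the $p$-curvatures still vanish for almost all $p$ and so that the path $\beta:[0,1]\to\overline{B}^{\an}$ given by the contraction, together with $s_0$ and the node $P$, are all defined over $R$. Then I would pass to the formal (or rigid-analytic) neighborhood of $s_0$ in $\overline{B}$ along $\beta$: choosing a local parameter $q$ on a curve in $\overline{B}$ through $s_0$ that the path $\beta$ approaches, the family $\mathcal{C}$ near the node $P$ looks étale-locally like $xy = q$, so that pulling back to $\Spec R((q))$ (or an appropriate rigid-analytic disc punctured at $q=0$) yields a family whose total space near $P$ is a rigid-analytic annulus $\X$ with coordinate $t = x$ over $S((q))$, for $S$ a suitable localization of $R$.

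The second step is to observe that the vanishing loop $\gamma$ becomes, in this picture, (isotopic to) the "core" circle of the holomorphic annulus that degenerates to the node. This is precisely the Emerton--Kisin dictionary alluded to in the introduction: the annuli $\{|t|=|q|^{1/2}\}$ approximate a punctured disc as $q\to 0$, and $\gamma$ contracting to $P$ along $\beta$ means exactly that $\gamma$ is carried to the generator of $\pi_1$ of this annulus. So the monodromy of $(V,\nabla)$ about $\gamma$, computed in a nearby smooth fiber $\mathcal{C}_s$, agrees (up to conjugacy, hence up to having finite order) with the monodromy of the restriction of $(V,\nabla)$ to $\X \times_{S((q))} K((q))$ around the annulus, where $K = \operatorname{Frac}(S)$. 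I should be careful here to check two things: that the connection relative to $B$ restricts to a connection relative to the one-dimensional base $\Spec S((q))$ (this is automatic, since relative differentials surject), and that a cyclic vector exists after possibly further localizing $S$ or passing to a generically étale cover — since Theorem \ref{thm:use} requires one, and cyclic vectors exist generically for connections on curves over a field.

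The third step is to invoke Theorem \ref{thm:use}: the $p$-curvatures of $(V,\nabla)$ on $\X$ over $S((q))$ vanish for almost all $p$ (they are the restrictions of the vanishing $p$-curvatures upstairs, or rather are controlled by them — I'd need to check that reducing mod $p$ and then restricting to the annulus commutes appropriately with the formal completion, which is the kind of compatibility that "spreading out" is designed to give), hence the pullback to $\X \times_{S((q))} K((q))$ has finite monodromy. Transporting this back via the identification of loops above gives that $\gamma$ has finite order in the monodromy representation, as desired.

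I expect the main obstacle to be the passage between the complex-analytic degeneration (a genuine one-parameter family of smooth curves over a punctured disc in $\C$, along the real path $\beta$) and the algebraic/rigid-analytic picture over $S((q))$ — in particular, verifying that the monodromy of the complex local system about $\gamma$ is correctly computed by the rigid-analytic/formal monodromy of the connection, and that the hypothesis "$p$-curvature vanishes for almost all $p$" is genuinely inherited by the restricted connection on the annulus over the new base $\Spec R((q))$. This requires a careful choice of spreading-out data and a comparison of the de Rham cohomology / horizontal sections in the two settings; the rest (existence of cyclic vectors, the étale-local normal form $xy=q$ near a node, surjectivity of relative differentials) is standard. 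This comparison is presumably what Sections 4 and 5 of the paper are devoted to, so here I would simply cite Theorem \ref{thm:tech}'s components as black boxes and assemble them.
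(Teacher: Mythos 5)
Your high-level strategy is the same as the paper's: complete along the boundary divisor, realize a neighborhood of the node as a rigid annulus over $S((q))$ via $xy=q^a$, choose a cyclic vector (the paper uses the cyclic basis theorem over $\mathcal{C}^{h,P}[1/q]$ and then shrinks to a subannulus $\X$ avoiding the zeros of the resulting denominator $f$, so that everything reduces mod $\mathfrak{p}$), and invoke Theorem \ref{thm:use}. But there is a genuine gap at exactly the point you flag and then defer: the claim that the complex-analytic monodromy of $\gamma$ ``agrees with'' the monodromy of the connection on $\X\times_{S((q))}K((q))$ is not something that can be black-boxed, because a connection on a rigid annulus over $K((q))$ has no monodromy representation in any a priori sense comparable to the topological one. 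What Theorem \ref{thm:use} actually delivers is (1) that the connection matrix lies in $\bOr(\X)$, hence extends across $q=0$ to $X\times E$, and (2) a full basis of solutions over $\bOr(\X)$ after finite pullback. Converting (2) into triviality of the holomorphic monodromy is the content of Proposition \ref{csol}: one truncates the rigid solution matrix mod $q^m$ to get a holomorphic change of basis making the connection matrix $\equiv 0 \pmod{q^m}$, shows (Lemma \ref{cong}) that congruent connection matrices have congruent monodromy matrices $\mathcal{M}\in\GL_n(\mathcal{O}^{\hol}(E))$, and concludes $\mathcal{M}\equiv \mathbf{I}\pmod{q^m}$ for all $m$, hence $\mathcal{M}=\mathbf{I}$. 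This is the bridge your proposal asserts but does not build, and ``citing Theorem \ref{thm:tech}'s components as black boxes'' is circular here since those components constitute the proof being asked for.

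A second missing step: the rigid-analytic picture lives over the function field $K$ of the boundary divisor $Y$, so the argument above only applies to the one-parameter families $\mathcal{C}_{s_q}$ attached to \emph{generic enough} complex points of $Y$ (those induced by embeddings $K\hookrightarrow\C$). The given boundary point $s_0$ need not be generic enough, and the given fiber $\mathcal{C}_s$ lies over a point of $B$, not of $Y$. The paper handles this by a separate propagation argument: generic enough points are analytically dense in $Y$, the local monodromy matrix varies holomorphically in the base (Lemma \ref{forfutureuse}), so finiteness of fixed order on a dense set forces it everywhere near $s_0$ (Lemma \ref{lem:nbd}), and then Proposition \ref{futureuse} transports finiteness along the family of annuli over a contractible neighborhood of the path $\beta$ back to $s$. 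Your proposal skips this entirely by implicitly assuming the annulus over $S((q))$ directly computes the monodromy at the chosen $s$, which it does not.
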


We end this section by demonstrating how Theorem \ref{thm:tech} is used to prove Theorem \ref{gen}. We first prove the following lemma, which will allow us to reduce Theorem \ref{gen} to the case of {\it the} generic curve. Let $X$ and $Y$ be irreducible varieties over $\C$, and let $f: X \rightarrow Y$ be a map with generically connected fibers. Suppose that $U \subset X$ and $V \subset Y$ denote smooth open subvarieties of $X$ and $Y$, such that $f$ restricted to $U$ is a smooth morphism. Suppose that $x \in U^{\an}$, and $y = f(x)$. 

\begin{Lemma}\label{pathlifting}
Let $X, Y,f$ be as above, and let $\beta: [0,1] \rightarrow X^{\an}$ be a path, such that $\beta(0) = x$ and $\beta([0,1)) \subset U^{\an}$. Let $\alpha: [0,1] \rightarrow V^{\an}$ be any loop based at $y$. Then, a suitable reparameterization of the path $\alpha * (f \circ \beta)$ can be lifted to $X$.
\end{Lemma}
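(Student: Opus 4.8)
The plan is to reduce the problem to a path-lifting statement for the fibration $f|_U : U^{\an} \to V^{\an}$, where standard homotopy-theoretic tools apply, and then to handle the one endpoint $\beta(0) = x$ that may fail to lie in $U^{\an}$ by a small-perturbation argument. First I would observe that since $f$ restricted to $U$ is a smooth morphism of complex varieties with (generically) connected fibers, the map $f|_U : U^{\an} \to V^{\an}$, after possibly shrinking $V$ so that all fibers over $V$ are connected and $f|_U$ is a submersion, is a topological fiber bundle onto its image by Ehresmann's theorem — or at the very least a Serre fibration, which is all we need. Consequently $f|_U$ has the homotopy lifting property, and in particular paths in $V^{\an}$ lift to paths in $U^{\an}$ once a starting point in the fiber is specified.

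Next I would set up the concatenation carefully. The path $f \circ \beta : [0,1] \to Y^{\an}$ starts at $y = f(x)$; after reparameterizing so that $\alpha * (f\circ\beta)$ means ``first traverse $\alpha$, then traverse $f\circ\beta$'', this concatenated path is continuous and begins at $y$. I would lift it in two stages. For the initial segment $\alpha$ (a loop in $V^{\an}$ based at $y$): choosing a lift is where the endpoint issue bites, because the natural basepoint of the lift should be $x$, but $x$ need not lie in $U^{\an}$. To get around this, I would use that $\beta([0,1)) \subset U^{\an}$ and $\beta$ is continuous at $0$: pick a small $\epsilon > 0$, replace $x$ by $x_\epsilon := \beta(\epsilon) \in U^{\an}$, and correspondingly replace $\alpha$ by the loop $\alpha_\epsilon := (f\circ\beta|_{[0,\epsilon]})^{-1} * \alpha * (f\circ\beta|_{[0,\epsilon]})$ based at $f(x_\epsilon)$ — a loop that is freely homotopic to $\alpha$ and, more to the point, conjugate to it, so that replacing $\alpha$ by $\alpha_\epsilon$ changes $\alpha * (f\circ\beta)$ only by a reparameterization and by prepending/cancelling a back-and-forth traversal of $f\circ\beta|_{[0,\epsilon]}$, which is null-homotopic rel endpoints. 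Thus it suffices to lift $\alpha_\epsilon * (f\circ\beta|_{[\epsilon,1]})$ starting at $x_\epsilon$.

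Now both pieces live in the fibration: $\alpha_\epsilon$ is a loop in $V^{\an}$ based at $f(x_\epsilon)$, so by the path-lifting property of the fibration $f|_U$ it lifts to a path $\tilde\alpha_\epsilon$ in $U^{\an}$ starting at $x_\epsilon$ and ending at some point $x'$ in the fiber $(f|_U)^{-1}(f(x_\epsilon))$. Then $f \circ \beta|_{[\epsilon,1]}$ is a path in $Y^{\an}$ starting at $f(x_\epsilon)$; its restriction to $[\epsilon, 1)$ lands in $V^{\an}$ and lifts (starting at $x'$) to a path in $U^{\an}$, and at the final parameter value $1$ we simply take the lift to be $\beta(1) \in X^{\an}$ — continuity at the endpoint follows because $\beta$ itself is a continuous lift of $f\circ\beta$ on all of $[0,1]$, and we may in fact take the lift of $f\circ\beta|_{[\epsilon,1]}$ to be $\beta|_{[\epsilon,1]}$ itself after noticing that any two lifts of a path agreeing at a point of $U^{\an}$ agree wherever both stay in $U^{\an}$ (uniqueness of lifts in a covering-space-type argument along the fiber). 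Concatenating $\tilde\alpha_\epsilon$ with $\beta|_{[\epsilon,1]}$ gives the desired lift of (a reparameterization of) $\alpha * (f\circ\beta)$.

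The main obstacle I anticipate is purely in the bookkeeping at the bad endpoint: making precise that the lift of $f \circ \beta|_{[\epsilon,1]}$ produced by the fibration's homotopy lifting property can be taken to coincide with $\beta|_{[\epsilon,1]}$, so that the concatenated lift is genuinely continuous at $t=1$ with value $\beta(1)$. This requires a uniqueness-of-lifts statement — two lifts of the same path in $V^{\an}$ that agree at one time and both remain in $U^{\an}$ must agree — which holds because the fibers of $f|_U$ are discrete over no nontrivial interval; more robustly, one argues that $\beta|_{[\epsilon,1]}$ and the fibration-lift differ by a path in a single fiber that is constant at $t=\epsilon$, and since we only care about the lift existing (not being canonical), we are free to declare $\beta|_{[\epsilon,1]}$ to be our chosen lift on that segment, having already matched starting points by the isotopy/conjugation trick. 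Everything else — Ehresmann, homotopy lifting, the conjugation of $\alpha$ to $\alpha_\epsilon$ — is standard.
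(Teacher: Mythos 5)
Your proposal has a genuine gap at the splice point, and it stems from a false uniqueness-of-lifts claim. After lifting the loop $\alpha_\epsilon$ through the fibration $f|_U$ starting at $x_\epsilon=\beta(\epsilon)$, the lift ends at some point $x'$ of the fiber over $f(x_\epsilon)$, and there is no reason whatsoever that $x'=x_\epsilon$: lifts of loops through a fibration with positive-dimensional fibers need not close up (this failure \emph{is} the monodromy of the fibration), and two lifts of the same path agreeing at one time need \emph{not} agree elsewhere --- the fibers here are connected complex varieties, not discrete sets, so there is no covering-space uniqueness. Consequently you cannot ``declare $\beta|_{[\epsilon,1]}$ to be the chosen lift'' of the second segment, because $\beta(\epsilon)=x_\epsilon\neq x'$ and the concatenation $\tilde\alpha_\epsilon * \beta|_{[\epsilon,1]}$ is discontinuous at the junction; nor does a lift of $f\circ\beta|_{[\epsilon,1)}$ starting at $x'$ have any reason to extend continuously to $t=1$. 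The missing idea is precisely the hypothesis you never use: the fibers of $f$ are connected. The paper's proof lifts $\alpha$ to some $\tilde\alpha$ in $U^{\an}$, then uses connectedness of $f^{-1}(y)$ to choose a path $\alpha'$ \emph{inside the fiber} $f^{-1}(y)^{\an}$ joining the endpoint of $\tilde\alpha$ to $x$; since $f\circ\alpha'$ is constant at $y$, the concatenation $\tilde\alpha * \alpha' * \beta$ projects to $\alpha * \mathrm{const}_y * (f\circ\beta)$, a reparameterization of $\alpha*(f\circ\beta)$. That fiber-path is exactly the bridge your argument lacks.

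A secondary point: your $\epsilon$-perturbation is both unnecessary and harmful. It is unnecessary because the standing hypotheses put $x\in U^{\an}$ (only $\beta(1)$ may leave $U^{\an}$), so there is no ``bad endpoint'' at $t=0$. It is harmful because replacing $\alpha$ by the conjugate $\alpha_\epsilon$ and prepending a back-and-forth traversal of $f\circ\beta|_{[0,\epsilon]}$ changes the path up to \emph{homotopy rel endpoints}, not up to \emph{reparameterization}, which is what the lemma asserts. Dropping the perturbation entirely and inserting the fiber path as above repairs the argument and recovers the paper's proof.
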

\begin{proof}
The path $\alpha$ can be lifted to a path $\tilde{\alpha}: [0,1] \rightarrow X$ because the map $f: U \rightarrow V$ is smooth. Because the fibers of $f$ are connected, there exists a path $\alpha'$ contained inside $f^{-1}(y)^{\an}$ which connects $\tilde{\alpha}(0)$ and $x$. The path $\tilde{\alpha}*\alpha'*\beta$ is clearly a lift of $\alpha * (f \circ \beta)$ (up to reparameterization).
\end{proof}

\begin{proof}[Proof of Theorem \ref{gen}]
If $g = 0$, then the curve is simply connnected, and so there is no monodromy. If $g = 1$, then fundamental group is abelian, and by \cite{Bost} the monodromy represenation has finite image. Therefore, we assume that $g \geq 2$. We deal with the cases $g =2$ and $g > 2$ separately, because the generic point of the moduli space of genus two curves is not fine.
\subsubsection*{Case 1: $g > 2$}

Let $\Mg \rightarrow \Q$ be the moduli space of genus $g$ curves. Let $\Mgbar$ be its compactification (as constructed by Knudsen in \cite{Knudsen}. Also see \cite[Chapter 4]{Harris}). We recall the following facts about $\Mg, \ \Mgbar$ and mapping class groups of curves. 
\begin{enumerate}
\item The generic genus $g$ curve has no automorphisms, for $g>2$. \label{itm:one}
\item There exists a boundary divisor $D_{\irr} \subset \Mgbar$, consisting of irreducible nodal curves. The generic irreducible nodal curve has no automorphisms.  \label{itm:two}
\item For every $g_1 + g_2 = g$ with $g_i \geq 1$, there exists a boundary divisor $D_{g_1,g_2}$ consisting of a reducible curve, with two smooth irreducible components with genera $g_1$ and $g_2$ meeting at a node. If the $g_i >1$, the generic such curve has no automorphisms. \label{itm:three}
\item By Facts \ref{itm:one}, \ref{itm:two} and \ref{itm:three}, there exists an open subvariety $\overline{U} \subset \Mgbar$ which contains the generic point of $D_{\irr}$ and $D_{g_1,g_2}$ for $g_i >1$ over which the moduli problem is fine. Let $\mathcal{C} \rightarrow \overline{U}$ be the corresponding family of curves. Let $\overline{U} \cap \Mg = U$ ($U$ can be assumed to be affine). \label{itm:four}
\item For $s \in U(\C)$ let $\Sigma_s$ denote the mapping class group of $\mathcal{C}_s^{\an}$. There is a surjective map from $\pi_1(U(\C),s)$ to $\Sigma_s$. The action of $\pi_1(U(\C),s)$ on the isotopy classes of simple closed loops (by parallel transport) of $\mathcal{C}_s$ factors through this map. \label{itm:five}
\item The mapping class group $\Sigma_s$ acts transitively on the isotopy classes of non-separating simple closed loops. If two separating simple closed loops break $\mathcal{C}_s$ into pieces with same genera, then the isotopy classes of these two loops lie in the same $\Sigma_s$ orbit. (See \cite{Farb})\label{itm:six}
\end{enumerate}
We will first prove the theorem for {\it the} generic curve, i.e. we will prove that every simple closed loop of $\mathcal{C}_s$ has finite order in the monodromy representation. There exists a proper normal irreducible variety $\overline{\M'}$ which is a double-cover of $\Mgbar$ branched over the generic point of $D_{1.g-1}$, and which is \`etale over the generic points of all the other boundary divisors. Let $D' \subset \overline{\M'}$  denote the pullback of $D_{1,g-1}$. The family of curves over $\overline{\M'}$ extends to the generic point of $D'$. In order to prove the result for the generic curve, it suffices to prove it over the family over $\overline{\M'}$. Let $U' \subset \overline{\M'}$ denote the pullback of $U \subset \Mgbar$. 
We now apply Theorem \ref{thm:tech} as follows. 

Suppose that $s' \in U'(\C)$. Let $s \in U(\C)$ be its image. Let $\beta$ be a path joining $s \in U(\C)$ and $s_0 \in D_{\irr}(\C) \cap \overline{U}(\C)$. This corresponds to a non-separating simple closed loop $\gamma \subset \mathcal{C}_s$ contracting to the nodal point of $\mathcal{C}_{s_0}^{\an}$ along $\beta$. By applying Theorem \ref{thm:tech}, we see that $\gamma$ acts with finite order. By Facts \ref{itm:five} and \ref{itm:six}, the action of $\pi_1(U(\C),s)$ on the set of isotopy classes of non-separating simple closed loops is transitive. Therefore, by modifying $\beta$ by a loop $\alpha$ based at $s$, every non-separating simple closed loop $\gamma' \subset \mathcal{C}_s^{\an}$ can be contracted to the node of $\mathcal{C}_{s_0}^{\an}$. Lifting this path to $\overline{\M'}$, we see that every simple closed loop in $\mathcal{C}_{s'}$ can also be contracted to the node of an boundary point
of $\overline{\M'}$. The result follows. 

For a simple closed loop which breaks $\mathcal{C}_{s'}^{\an}$ into two pieces both of which have genus greater than $1$, the same argument applies. 

Finally, we claim that the fundamental group of $U'(\C)$ acts transtively on the isotopy-classes of simple closed loops on $\mathcal{C}_{s'}^{\an}$, which break $\mathcal{C}_{s'}^{\an}$ into two pieces, one of which has genus $1$. Indeed, the element of $\pi_1(U(\C),s)$ which corresponds to local monodromy about $D_{1,g-1}$ acts trivially on such loops, and along with the identity, forms a full set of representatives for the quotient $\pi_1(U'(\C),s')$, establishing the claim. Therefore, the above argument applies to prove the result in the case of the family over $\overline{\M'}$, and therefore for the generic family. 

Now, let $\mathcal{C} \rightarrow \Spec R$ be any any generic proper smooth genus $g$ curve. We replace $\Spec R$ with $\Spec R \times_{\Mgbar} U'$ (note that $U'$ is affine, and therefore, $\Spec R \times_{\Mgbar} U'$ is affine. For ease of notation, we refer to this as $\Spec R$). There exists a proper normal irreducible variety $X$ along with a map $f: X \rightarrow \overline{\M'}$ with the following properties: 
\begin{enumerate}
\item The map $\Spec R \rightarrow \overline{\M'}$ factors through $X \rightarrow \overline{\M'}$. 
\item The map $\Spec R \rightarrow X$ is generically finite.
\item The map $X \rightarrow \overline{\M'}$ has generically connected fibers. 
\end{enumerate}
By replacing $\Spec R$ with a birational variety, we may assume that $X$ is a proper normal variety. Let $x \in X(\C)$ denote a point at which $f$ is smooth (such an $x$ exists because $f$ is generically smooth), and let $C$ denote the fiber of $\mathcal{C}$ over $x$. Given any simple closed curve, we may apply Lemma \ref{pathlifting} to find a path $\beta \subset X^{\an}$ which contracts the chosen simple closed curve to a point. Finally, we replace $\Spec R$ with a normal proper variety $X'$ birational to it. Therefore, the map $X' \rightarrow X$ is generically \`etale, and is branched over the generic points of the boundary divisors of $X$. By using the path lifting property for branched covers, we see that every simple closed curve as above on a generic fiber over $X'$ can be contracted to a point. We apply Theorem \ref{thm:tech} to conclude the finiteness of monodromy. 

\subsubsection*{Case 2: $g = 2$}
Consider the family of projective curves $\mathcal{C} \rightarrow \mathcal{M}_{0,6}$ with affine equations 
$$y^2 = x(x-1)(x-\lambda_1)(x-\lambda_2)(x-\lambda_3).$$
The same arguments used in Case 1 for the family $\overline{\M'}$ apply to prove Theorem \ref{gen} for this family. Suppose that $\mathcal{C} \rightarrow H$ was some other family of smooth genus 2 curves over an irreducible base $H$. We claim that there exists a generically \`etale map $H' \rightarrow H$, such that the family $\mathcal{C} \times_H H'$ is pulled back from $\mathcal{M}_{0,6}$. The arguments of Case 1 again apply to prove our result if this were the case. 

We now prove the claim. Because $\mathcal{C} \rightarrow H$ is a family of genus two curves, it is hyperelliptic Zariski-locally over $H$, i.e. there exists a degree-2 map $\mathcal{C} \rightarrow H \times \mathbb{P}^1$, once we replace $H$ by a Zariski-open subset. The inverse image of the ramification locus in $H \times \mathbb{P}^1$ gives a degree-six \`etale cover of $H$. By choosing $H'' \rightarrow H$ to be an appropriate \`etale cover and pulling the family back to $H''$, we may assume that the degree-six \`etale cover has six connected components. This yields a canonical map $H'' \rightarrow \mathcal{M}_{0,6}$, and the family $\mathcal{C} \rightarrow H''$ is locally a quadratic twist of the pullback family from $\mathcal{M}_{0,6}$. By shrinking $H''$ if necessary, and taking a double-cover $H'$ of $H''$, we can get rid of the quadratic twist to assume that $\mathcal{C} \times_{H''} H'$ is indeed the pullback family from $\mathcal{M}_{0,6}$, as required. The theorem follows. 

\end{proof}

\section{Rigid computations}
In this section, we work with rigid-analytic varieties over equicharacteristic bases. All our spaces are of dimension 1, and a good reference for this case is \cite{Kedlaya}. For a treatment of higher dimensinal varieties, see \cite{Remmert} and \cite{Tate}.

Let $S \subset \C$ be a $\Z$-algebra of finite type and let $K$ be its field of fractions. By inverting finitely many elements of $S$, we may assume that $S \cap \overline{\Q} = \mathcal{O}$. Here, $\mathcal{O}$ is the ring of integers of some number field, sufficiently localized. Then for almost all primes $\mathfrak{p} \subset \mathcal{O}$, $S/\mathfrak{p}$ is also an integral domain, with fraction field $K_{\mathfrak{p}}$ of same transcendence degree over $\mathbb{F}_p$ as $K$ over $\mathbb{Q}$. Given an element of $K$, its $\mathfrak{p}$-adic valuation still makes sense, and if this valuation isn't negative, it makes sense to reduce the element modulo $\mathfrak{p}$ to obtain an element of $K_{\mathfrak{p}}$. 

Consider the ring of power series over $K$ (\textit{resp.} $K_{\mathfrak{p}}$) in the variable $q$, $K[|q|]$ (\textit{resp.} $K_{\mathfrak{p}}[|q|]$), and its field of fractions $K((q))$ (\textit{resp.} $K_{\mathfrak{p}}((q))$). These fields are complete with respect to the discrete $q$-adic valuation (which is non-archemedian). We consider vector bundles with connection on rigid-analytic annuli over $K((q))$ and $K_{\mathfrak{p}}$. We make the following definitions: 

\begin{definition}
\begin{enumerate}
\item $\Xr\ ($\textit{resp.} $\mathcal{X}_{r_1,r_2,\mathfrak{p}})$ is defined to be the closed rigid annulus over $K((q))\ ($\textit{resp.} $K_{\mathfrak{p}}((q)))$ with inner radius $r_1$ and outer radius $r_2$. 
\item $\X\ ($\textit{resp.} $\mathcal{X}_{\mathfrak{p}})$ is defined to be the closed rigid annulus over $K((q))\ ($\textit{resp.} $K_{\mathfrak{p}}((q)))$ with inner and outer radii 1.
\item For any rigid-analytic annulus $\mathcal{Y}$ as above, $\Or(\mathcal{Y})$ is defined to be the corresponding affnoid ring (of rigid functions).
\item For $\mathcal{Y}$ as above, $\bOr(\mathcal{Y})$ is defined to be that subring of $\Or(\mathcal{Y})$ consisting of functions bounded by $1$

\end{enumerate}
\end{definition}

For example, $\Or(\X) = K((q))\langle t^{-1},t \rangle$ is the ring of infinitely tailed laurent series with coefficients in $K((q))$ in the variable $t$ with the property that the coefficients at either end converge to zero. The ring of functions bounded by $1$ is $\bOr(X) = K[|q|] \langle t^{-1},t\rangle$, the ring of inifinitely tailed laurent series with coefficients in $K[|q|]$ in the variable $t$ with the property that the coefficients at either end converge to zero. Similarly, $\Or(\Xr) \subset K((q))[|t^{-1},t|]$ consists of elements of the form $\sum a_kt^k$ with the property that for all $k$, $|a_k|r_i^k \rightarrow 0$ as $|k| \rightarrow \infty$ for $i = 1,2$. A function $f$ is in $\bOr(\Xr)$ if it is in $\Or(\Xr)$ and satisfes the following (for all $k$, and for $i=1,2$): 
 \begin{equation}\label{growthcond}
|a_{k}|r_i^k \le 1
\end{equation}
More often than not, we will use the language of the above inequalities, instead of saying that $f \in \bOr$. 

In this section, the vector bundles with connection we consider will be those ``which can be reduced modulo $\mathfrak{p}$" (we will soon make precise what we mean by this) for almost all primes $\mathfrak{p} \subset \mathcal{O}$, and prove a rigid-analytic version of the Grothendieck-Katz $p$-curvature conjecture.

Before stating the main theorem of this section, we remark that we will exclusively work with annuli with ``rational" radii, i.e. with inner and outer radius a rational power of $|q|$. By extending scalars to fields obtained by adjoining rational powers of $q$, and by substituting $q^{\alpha}t$ for $t$ (for rational $\alpha$), we will be able to move back and forth between different annuli, without disrupting any of the integrality (with respect to primes $\mathfrak{p}$) properties we might have assumed. Henceforth, by annulus, we will always mean annulus with ``rational" inner and outer radii. Further, whenever we move back and forth between different annuli, we will leave implicit the operations of adjoining appropriate roots of $q$, and the replacing of $t$ with the appropriate $q^{\alpha}t$. 

We now state the main theorems of this section: 

\begin{Theorem}\label{thm:main}
Let $(V,\nabla)$ be a vector bundle with connection on $\Xr$, and suppose that there exists a cyclic vector with respect to which all data can be reduced modulo $\mathfrak{p}$ for almost all primes $\mathfrak{p} \subset \mathcal{O}$. If the $p$-curvatures vanish for almost all $p$, then there exists a basis of flat sections after finite pullback.
\end{Theorem}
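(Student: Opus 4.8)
The strategy is to use the hypothesis to descend the whole problem to the torus $\mathbb{G}_m$, where the $p$-curvature conjecture is already known: the geometric fundamental group of $\mathbb{G}_m$ is $\widehat{\mathbb{Z}}$, so its monodromy is abelian, and for abelian --- indeed solvable --- monodromy the conjecture is a theorem (see \cite{Bost}; and \cite{Katz}). Concretely, I would show that $(V,\nabla)$ extends, up to a rigid-analytic gauge transformation, to a connection on an integral model of $\X$ whose special fibre is $\mathbb{G}_m$; apply the known case over $\mathbb{G}_m$ to trivialise the special-fibre connection after a Kummer cover; and then lift the resulting flat basis back to the annulus.

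First I would use the cyclic vector to replace $(V,\nabla)$ by the differential module of a monic operator $L=\partial^{n}+a_{n-1}\partial^{n-1}+\dots+a_{0}$, with $\partial$ the derivation $t\,\tfrac{d}{dt}$ and $a_{i}\in\Or(\Xr)$, the hypothesis ensuring that --- after the implicit operations of rescaling $t$ by a rational power of $q$ and adjoining roots of $q$ --- the $a_i$ lie in $\bOr$ and reduce modulo almost every $\mathfrak{p}\subset\mathcal{O}$ to an operator $\overline{L}$ of vanishing $p$-curvature over $K_{\mathfrak{p}}((q))$. The heart of the matter, and the step I expect to be the real obstacle, is then to show that after a further gauge transformation --- itself a matrix of rigid functions --- the connection matrix acquires entries in $\bOr(\X)=K[|q|]\langle t^{-1},t\rangle$ which moreover reduce modulo $q$ to a matrix of \emph{Laurent polynomials}, so that $(V,\nabla)$ extends to an integral model $\mathfrak{X}$ of $\X$ over $K[|q|]$ with special fibre $\mathbb{G}_m$ over $K$, to which it reduces modulo $q$ as an honest connection. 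This is genuinely delicate precisely because the series in $t$ that occur have infinitely many terms in both directions and carry $q$-denominators: a priori there is no reduction modulo $q$ at all, and even once the denominators are cleared the reduction could fail to be supported in finitely many powers of $t$. This is exactly the point at which Katz's local monodromy theorem is unavailable --- there the irregularity is bounded (poles of finite order in $t$) --- and where one must feed in both the growth inequalities $|a_k|r_i^k\le 1$ and the vanishing of the $p$-curvatures to build and control the gauge transformation.

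Granting this, restriction to the special fibre produces a vector bundle with connection $(\overline{V},\overline{\nabla})$ on $\mathbb{G}_m$ over $K$, relative to $K$. Spreading out over a finitely generated $\mathbb{Z}$-subalgebra $S'\subset K$, and observing that the formation of the $p$-curvature commutes with the restriction $q\mapsto 0$, the $p$-curvatures of $\overline{\nabla}$ vanish for almost all primes; hence, by the solvable (here abelian) case of the $p$-curvature conjecture over $\mathbb{G}_m$, the monodromy of $\overline{\nabla}$ is finite. Since every finite connected cover of $\mathbb{G}_m$ is Kummer, there are an integer $m$ and a finite extension of $K$ after which $\overline{\nabla}$ becomes trivial: it admits a basis of flat sections.

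It remains to lift. Pulling $(V,\nabla)$ back along $t\mapsto t^{1/m}$ (and the finite base extension), and choosing a basis of the integral model reducing modulo $q$ to a flat basis of $\overline{\nabla}$, the connection matrix $A$ becomes $\equiv 0\bmod q$; one then solves $dg=-Ag$ for a gauge transformation $g\equiv 1\bmod q$ by successive approximation in powers of $q$, the obstruction at each stage lying in $M_{n}\!\bigl(H^{1}_{\mathrm{dR}}(\mathbb{G}_m/K)\bigr)\cong M_{n}(K)$ and having, after normalisation, entries in a finitely generated $\mathbb{Z}$-algebra. Feeding in once more the vanishing $p$-curvatures of the pulled-back connection on the annulus --- in characteristic $p$ Cartier's theorem exhibits such a connection as a Frobenius pullback, hence trivial on the Picard-trivial integral model, so that nothing obstructs the mod-$\mathfrak{p}$ lift --- these obstruction classes reduce to zero modulo almost all $\mathfrak{p}$, and therefore vanish. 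The resulting $g$ lies in $\Or$ because, again by the growth estimates and because the base has equal characteristic zero, the exponential-type series that appears converges $q$-adically; this gives the desired basis of flat sections after finite pullback. To summarise: the step I expect to be hard is the extension to an integral model with special fibre $\mathbb{G}_m$; the reduction of the conjecture to $\mathbb{G}_m$ is a citation, and the convergence estimates in the lifting step are bookkeeping.
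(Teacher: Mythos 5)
Your architecture matches the paper's: extend the connection to the integral model of $\X$ with special fibre $\mathbb{G}_m$, invoke the known case of the conjecture on $\mathbb{G}_m$ to trivialize modulo $q$ after a Kummer pullback, then kill the connection matrix order by order in $q$, using the vanishing $p$-curvatures to dispose of the constant (in $t$) obstruction at each stage, and finally check convergence. But the step you yourself flag as ``the real obstacle'' and then dispatch with ``Granting this'' is precisely the content of the paper's key Proposition (Corollary \ref{grow}), and it is not supplied by anything you write. Worse, your opening sentence claims the hypothesis ``ensures'' that the $a_i$ lie in $\bOr$: it does not. Reducibility modulo almost all $\mathfrak{p}$ is a $\mathfrak{p}$-adic integrality condition on the coefficients $a_{k,i}\in K((q))$, whereas membership in $\bOr(\Xr)$ is the $q$-adic growth bound \eqref{growthcond}, $|a_{k,i}|r_j^k\le 1$; the two are logically independent, and the growth bound is exactly what must be extracted from the vanishing of the $p$-curvatures. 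The paper's argument is by contradiction: if some $|a_{k,i}|>1$, set $\ell=\max_{k,i}|a_{k,i}|^{1/(n-k)}>1$, gauge by $\mathrm{diag}(1,t^{-\nu},\dots,t^{-(n-1)\nu})$ for a suitably extremal slope $\nu$, reduce modulo a large $\mathfrak{p}$, take an eigenvector $w$ of the leading matrix $B_\nu$ for an eigenvalue of norm $\ell$, and show by a word-by-word norm estimate that the coefficient of $t^{\nu p}$ in $\nabla(D)^p w$ has norm $\ell^p$, while $\nabla(D^p)w=\nabla(D)w$ is bounded by $\ell^{2n}$ --- contradicting $\psi_p=0$. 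Without this (or a substitute), the reduction to $\mathbb{G}_m$ never gets off the ground, so as written the proof has a genuine gap at its central point.

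Two lesser issues. First, your justification for the vanishing of the stage-$m$ obstruction class in $M_n(K)$ (``Cartier's theorem exhibits such a connection as a Frobenius pullback, hence trivial on the Picard-trivial integral model'') is muddled; the correct and elementary reason is the one in Lemma \ref{A_0}: for the accumulated constant matrix $A_0\equiv 0\bmod q$ one has $D^p=D$, hence $\psi_p=A_0^p-A_0\equiv -A_0 \bmod (\mathfrak{p},q^m)$ for $p$ large, forcing $A_0\equiv 0\bmod q^m$ for every $m$. Second, the convergence is not mere bookkeeping: the successive gauge transformations $I-q^mC_m(t)$ are controlled a priori only on $|t|=1$, and the theorem is about all of $\Xr$. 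One needs the non-archimedean three-circle inequality to show the product converges on an \emph{open} neighbourhood of the unit circle, and then one must repeat the construction at each rational radius and glue solutions over a finite cover of $\Xr$ by overlapping closed subannuli; your sketch addresses neither the overlap nor the gluing.
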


\begin{Theorem}\label{thm:use}
Let $(V,\nabla)$ be a vector bundle with connection on $\X$, and suppose that there exists a cyclic vector with respect to which all data can be reduced modulo $\mathfrak{p}$ for almost all primes $\mathfrak{p} \subset \mathcal{O}$. Suppose that the $p$-curvatures vanish for almost all $p$. Then: 
\begin{enumerate}
\item The coefficient matrix of the connection with respect to the cyclic basis has entries in $\bOr(\X)$.
\item After finite pullback, there exists a basis of solutions over $\bOr(\X)$. 
\end{enumerate}
\end{Theorem}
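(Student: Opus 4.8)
Let $L=\partial^{n}+a_{n-1}\partial^{n-1}+\cdots+a_{0}$, with $\partial=t\,d/dt$ and $a_{i}\in\Or(\X)$, be the scalar operator attached to the given cyclic vector; in the cyclic basis the connection matrix $A$ is the companion matrix of $L$, so statement (1) is exactly the assertion that $a_{0},\dots,a_{n-1}\in\bOr(\X)$, i.e.\ that no $t$-coefficient of any $a_{i}$ has a pole in $q$, equivalently that $(V,\nabla)$ extends to a module with connection over the integral model $\bOr(\X)$ with special fibre $\mathbb{G}_{m}$ over $K$. Theorem \ref{thm:main} applied to $\X=\mathcal{X}_{1,1}$ supplies a finite pullback $\pi\colon\X'\to\X$ --- which, by the structure of finite \'etale covers of a thin annulus, we may take to be a Kummer cover $t=(t')^{m}$ followed by a finite extension of $K((q))$ of the form $K'((q^{1/e}))$ --- together with a fundamental solution matrix $Y=W(f_{1},\dots,f_{n})\in\GL_{n}(\Or(\X'))$, the Wronskian of a basis $f_{1},\dots,f_{n}\in\Or(\X')$ of solutions of $\pi^{*}L$.

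For (1) I would first observe that $W=\det Y$ is a unit of $\Or(\X')$, hence of the form $c\,(t')^{k}(1+u)$ with $c\in K'((q^{1/e}))^{\times}$, $k\in\Z$ and every $t'$-coefficient of $u$ of strictly positive $q$-valuation; the Abel--Liouville identity $\partial W=-a_{n-1}W$ then yields $a_{n-1}=-k-(\partial u)(1+u)^{-1}\in\Z+q\,\Or(\X')$, so $a_{n-1}\in\bOr(\X')$ and therefore $a_{n-1}\in\bOr(\X)$. The remaining coefficients $a_{0},\dots,a_{n-2}$ --- equivalently, the construction of the integral model with special fibre $\mathbb{G}_{m}$ --- form the heart of the argument and the step I expect to be the main obstacle. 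Expressing each $a_{i}$ as the ratio of a sub-Wronskian of the $f_{j}$ to $W$ reduces the problem to bounding the $q$-valuation of the solution basis itself, and here the finiteness of the monodromy alone does not suffice: I would invoke the full $p$-curvature hypothesis, reducing the cyclic presentation modulo $\mathfrak{p}$ for almost all $\mathfrak{p}$ and using that vanishing $p$-curvature forces $\pi^{*}L\bmod\mathfrak{p}$ to have a full set of solutions algebraic over $K_{\mathfrak{p}}((q))\langle t,t^{-1}\rangle$ (Cartier), so that a genuine pole of order $\ge1$ in $q$ in some $a_{i}$ would persist after reduction and obstruct such solvability. An alternative route to the same conclusion is to re-run, in the radius-one case, the construction of solutions used to prove Theorem \ref{thm:main} while carrying $\mathfrak{p}$-integrality throughout.

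Granting (1), statement (2) follows by reduction modulo $q$ and $q$-adic lifting. Over $\bOr(\X)$ the connection reduces modulo $q$ to a connection $(\overline{V},\overline{\nabla})$ on $\mathbb{G}_{m}$ over $K$, whose monodromy is inherited from the (finite) monodromy of $(V,\nabla)$; a connection on $\mathbb{G}_{m}$ with finite monodromy is trivialised by a Kummer cover, so after enlarging $\X'$ by a further Kummer cover and a finite extension of $K$, and applying a bounded change of basis over $\bOr(\X')$, we may assume $A\in q\,\bOr(\X')$, i.e.\ $(V,\nabla)$ is a deformation of the trivial connection with trivial monodromy. For such a connection the fundamental solution matrix $Y\in\GL_{n}(\Or(\X'))$ of Theorem \ref{thm:main}, after dividing off its leading $q$-adic constant matrix $Y_{0}$ (and correspondingly conjugating $A$), is normalised to be congruent to $1$ modulo a positive power of $q$; it then involves no negative power of $q$, and since membership in $\Or(\X')$ forces each of its $q$-adic layers to be a Laurent polynomial in $t$ over $K'$, it lies in $\GL_{n}(\bOr(\X'))$ --- together with its inverse, because $\det Y\in 1+q\,\bOr(\X')$. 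This produces the required basis of flat sections over $\bOr(\X)$ after finite pullback, proving (2).
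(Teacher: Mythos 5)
There is a genuine gap, and it sits exactly where you predicted. Part (1) is not a consequence of anything else in the paper: it is the foundational step of the whole section, proved first (as the Proposition on extension to an integral model) and then fed, via Corollary \ref{grow}, into the proof of Theorem \ref{thm:main}. Your proposal inverts this order --- you invoke Theorem \ref{thm:main} to obtain a fundamental solution matrix over $\Or(\X')$ and then try to deduce (1) from it --- which is circular relative to the paper's logic, and you offer no independent proof of \ref{thm:main}. More importantly, even granting such a solution matrix, your argument for $a_{0},\dots,a_{n-2}$ is only the assertion that a pole in $q$ ``would persist after reduction and obstruct such solvability.'' That assertion \emph{is} the theorem, and it is not obvious: a pole in $q$ is a statement about $q$-adic valuation and is perfectly compatible with $\mathfrak{p}$-integrality, so reduction modulo $\mathfrak{p}$ goes through and one must exhibit a concrete failure of $\nabla(D)^{p}=\nabla(D^{p})$. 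The paper does this by a dominant-term analysis: assuming some $|a_{k,i}|>1$, it sets $\ell=\max_{k,i}|a_{k,i}|^{1/(n-k)}$, rescales the cyclic basis by $\mathrm{diag}(1,t^{-\nu},\dots,t^{-(n-1)\nu})$ for a carefully chosen $\nu$ so that the leading matrix $B_{\nu}$ of the new connection matrix has an eigenvalue $\lambda$ of norm exactly $\ell$, and then, by tracking norms of all length-$p$ words in the letters $B_{i}$ and $D$ applied to a $\lambda$-eigenvector $w$, shows that the $t^{\nu p}$-coefficient of $\nabla(D)^{p}w$ has norm $\ell^{p}$ while $\nabla(D^{p})w=\nabla(D)w$ is bounded by $\ell^{2n}$ --- a contradiction for large $p$. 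Your Abel--Liouville observation does handle $a_{n-1}$ (given a solution matrix), but that is the easy coefficient; nothing analogous controls the sub-Wronskians.

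For (2) your outline is close in spirit to the paper's ($q$-adic lifting over the integral model), but two steps need repair. The finiteness of monodromy of the mod-$q$ connection on $\mathbb{G}_{m}$ is not ``inherited from the (finite) monodromy of $(V,\nabla)$'' --- that finiteness is the conclusion being sought, not a hypothesis; the paper instead applies the known $p$-curvature conjecture for $\mathbb{G}_{m}$ (Katz, Andr\'e, Bost) to the reduction modulo $q$, whose $p$-curvatures vanish. And the $q$-adic lifting is not automatic once $A\in qM_{n}(\bOr(\X'))$: solving $D(C_{m})=B_{m}$ layer by layer requires the $t$-constant term of each layer to vanish, and what survives the induction is a constant matrix $A_{0}\in qM_{n}(K[|q|])$; the paper needs Lemma \ref{A_0} --- a further application of vanishing $p$-curvature, now modulo $(\mathfrak{p},q^{m})$, exploiting the topological nilpotence of $A_{0}$ --- to conclude $A_{0}=\mathbf{0}$. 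Your normalization of $Y$ by its ``leading $q$-adic constant matrix'' silently assumes that matrix is invertible and that the normalized $Y$ carries no negative powers of $q$; neither is justified without that lemma or without Theorem \ref{thm:main}, which again rests on (1).
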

Notice that $\bOr(\X)/(q) = K[t,t^{-1}]$, which is the coordinate ring of $\mathbb{G}_m$ over $K$. Part 1 of Theorem \ref{thm:use} proves that $(V, \nabla)$ actually extends to a very special integral model, namely the one with special fiber $\mathbb{G}_m$. Part 2 posits the existence of solutions over this integral model. 

We now say a word about what we mean by reducing reducing data modulo primes. Given a vector bundle with connection over a rigid $K((q))$-annulus, we would like to produce vector bundles with connections over the corresponding rigid $K_{\mathfrak{p}}((q))$-annuli, for almost all $\mathfrak{p}$. First of all, note that vector bundles over closed rigid-analytic annuli are always trivial, because the ring of functions is a principal ideal domain (for instance, see \cite[Proposition 7.3.2]{Kedlaya}). Therefore, given a basis, it makes sense to reduce the vector bundle modulo $\mathfrak{p}$ by simply looking at trivial vector bundles on the appropriate annulus over $K_{\mathfrak{p}}((q))$, with a basis of sections indexed by the same set. However, if we now add a connection, matters are not as clear. Fixing a basis and writing out the connection in terms of that basis (and the ubiquitous coordinate $t$), it might not be possible to reduce the connection modulo these primes.

 We say that $a \in K((q))$ can be reduced modulo almost all primes if its $\mathfrak{p}$-adic valuation is non-negative for almost all primes. We say that $f \in \Or(\Xr)$ can be reduced modulo almost all primes, if the set of primes $\mathfrak{p}$ with the property that some coefficient $a$ of $f$ has negative $\mathfrak{p}$-adic valuation, is finite. Here, we think of $f$ as an infinitely tailed Laurent series in the coordinate $t$ with coefficients in $K((q))$. In this case, $f$ can be reduced modulo almost all primes $\mathfrak{p}$ to give functions on $\X_{r_1,r_2,\mathfrak{p}}$. Also, note that a function $f$ which is bounded by $1$ will reduce to a bounded-by-$1$ function on the  corresponding mod-$\mathfrak{p}$ annulus. In fact, if the reduction of $f$ modulo almost all primes is bounded by $1$, then $f$ will also be bounded by $1$. 

Henceforth, we will always work with the data of a vector bundle with connection along with a basis of sections of the vector bundle, such that the connection matrix with respect to that basis can be reduced modulo almost all primes. Once we have a vector bundle with connection on a $K_{\mathfrak{p}}((q))$-annulus, the notion of its $p$-curvature makes complete sense. When we talk of a vector bundle with connection having vanishing $p$-curvatures, we implicitly mean that the basis which we're working with has the property that the connection matrix can be reduced modulo almost all primes $\mathfrak{p}$, and that the induced vector bundle with connection over $K_{\mathfrak{p}}$-annuli have vanishing $p$-curvatures. 

\subsection{Extension to an integral model}
For the rest of this section, denote by $D$ the derivation $t\frac{d}{dt}$. In this subsection, we prove a statement slightly more general than the first statement in Theorem \ref{thm:use}. 
\begin{Proposition}
Let $(W, \nabla)$ be a vector bundle with connection along with a cyclic vector $v$ on $\X$, such that the $p$-curvatures vanish for infinitely many $p$. Then, the entries of the connection matrix (with respect to the above cyclic basis) lie in $\bOr(\X)$. 
\end{Proposition}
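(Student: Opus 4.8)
The plan is to exploit the cyclic vector to replace $(W,\nabla)$ by a scalar twisted differential operator, and then to extract the boundedness of that operator's coefficients from a Gauss--valuation computation in the ring of twisted polynomials. Write $\partial=\nabla(D)$ with $D=t\frac{d}{dt}$, let $v,\partial v,\dots,\partial^{n-1}v$ be the cyclic basis, and let $L=\partial^{n}-a_{n-1}\partial^{n-1}-\dots-a_{0}$ be the unique monic operator with $\partial^{n}v=\sum_{i}a_{i}\partial^{i}v$; then the connection matrix in the cyclic basis is the companion matrix of $L$, so it is enough to show that each $a_{i}$ lies in $\bOr(\X)$. Now fix a prime $\mathfrak{p}$ of $\mathcal{O}$ at which all the data reduces and at which the $p$-curvature vanishes. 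On $\mathcal{X}_{\mathfrak{p}}$ one has $D^{p}=D$ as a derivation (because $k^{p}=k$ in $\F_{p}$ for every integer $k$, so $D^{p}$ and $D$ agree on every $t^{k}$), hence $\psi_{p}(D)=\nabla(D)^{p}-\nabla(D^{p})=\partial^{p}-\partial$ as an $\Or(\mathcal{X}_{\mathfrak{p}})$-linear endomorphism of $W_{\mathfrak{p}}$. Since $[D,D^{p}]=0$ this endomorphism commutes with $\nabla(D)$, and since $W_{\mathfrak{p}}$ is generated by its cyclic vector $\bar v$ over the twisted polynomial ring $\Or(\mathcal{X}_{\mathfrak{p}})[\partial;D]$, vanishing of $\psi_{p}$ is equivalent to $(\partial^{p}-\partial)\bar v=0$ in $W_{\mathfrak{p}}\cong \Or(\mathcal{X}_{\mathfrak{p}})[\partial;D]/\Or(\mathcal{X}_{\mathfrak{p}})[\partial;D]\cdot L_{\mathfrak{p}}$. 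Dividing $\partial^{p}-\partial$ on the right by the monic $L_{\mathfrak{p}}$, this says exactly that $\partial^{p}-\partial=B_{\mathfrak{p}}L_{\mathfrak{p}}$ for some $B_{\mathfrak{p}}$, necessarily monic of degree $p-n$.

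The second ingredient is a valuation on twisted polynomials. Let $v_{q}$ denote the $q$-adic (Gauss) valuation on $\Or(\mathcal{X}_{\mathfrak{p}})$, so that the condition $v_{q}\ge 0$ cuts out precisely $\bOr(\mathcal{X}_{\mathfrak{p}})$, and extend $v_{q}$ to $\Or(\mathcal{X}_{\mathfrak{p}})[\partial;D]$ by taking the minimum of $v_{q}$ over the coefficients of the powers of $\partial$. Because $D=t\frac{d}{dt}$ never decreases $v_{q}$, and because the associated graded ring of this filtration is the skew polynomial ring $K_{\mathfrak{p}}[t^{\pm1},\overline{q}^{\,\pm1}][\,\overline{\partial};\,t\frac{d}{dt}\,]$ over a domain --- hence itself a domain --- the extended $v_{q}$ is multiplicative. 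Applying it to $\partial^{p}-\partial=B_{\mathfrak{p}}L_{\mathfrak{p}}$ gives $v_{q}(B_{\mathfrak{p}})+v_{q}(L_{\mathfrak{p}})=v_{q}(\partial^{p}-\partial)=0$; but $L_{\mathfrak{p}}$ and $B_{\mathfrak{p}}$ are both monic, so $v_{q}(L_{\mathfrak{p}})\le 0$ and $v_{q}(B_{\mathfrak{p}})\le 0$, and therefore $v_{q}(L_{\mathfrak{p}})=0$. In other words every coefficient $a_{i,\mathfrak{p}}$ of $L_{\mathfrak{p}}$ lies in $\bOr(\mathcal{X}_{\mathfrak{p}})$, and this holds for every prime $\mathfrak{p}$ that reduces the data and at which the $p$-curvature vanishes.

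Finally I would spread this back to characteristic zero. Suppose some coefficient of some $a_{i}$ --- viewed as an infinitely-tailed Laurent series in $t$ with coefficients in $K((q))$ --- had negative $q$-adic valuation. Then its leading $q$-adic coefficient is a nonzero element of $K$, hence survives modulo $\mathfrak{p}$ for all but finitely many $\mathfrak{p}$, so $a_{i,\mathfrak{p}}\notin\bOr(\mathcal{X}_{\mathfrak{p}})$ for all but finitely many $\mathfrak{p}$. Since the $p$-curvature vanishes for infinitely many $\mathfrak{p}$, this contradicts the previous paragraph. Hence every $a_{i}$, and therefore every entry of the connection matrix, lies in $\bOr(\X)$.

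The one genuinely nonformal point is the multiplicativity of $v_{q}$ on the noncommutative ring $\Or(\mathcal{X}_{\mathfrak{p}})[\partial;D]$: I would establish it by identifying the associated graded ring explicitly and observing that it is an integral domain --- essentially the same computation that underlies the theory of generic radii of convergence for $p$-adic differential operators. The remaining points --- that $\psi_{p}(D)=\partial^{p}-\partial$ here, that vanishing of $p$-curvature is detected on the cyclic vector, and that the companion operator and the presentation of $W_{\mathfrak{p}}$ as a quotient of the twisted polynomial ring are compatible with reduction modulo $\mathfrak{p}$ --- are precisely what the standing hypotheses of the section are set up to provide, and amount to careful bookkeeping rather than new input.
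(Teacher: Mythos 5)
Your proof is correct, but it is a genuinely different argument from the one in the paper. The paper works directly with the companion matrix: it rescales the cyclic basis by powers of $t^{-\nu}$ to isolate a dominant coefficient matrix $B_{\nu}$, passes to $\overline{K_{\mathfrak{p}}}((q^{1/m}))$ to extract an eigenvector $w$ for an eigenvalue of maximal norm $\ell>1$, and then shows by a word-by-word estimate on $\nabla(D)^p$ that $|\nabla(D)^p w|$ grows like $\ell^p$ while $|\nabla(D^p)w|=|\nabla(D)w|$ stays bounded by $\ell^{2n}$, contradicting $\psi_p(D)=0$. You instead encode $\psi_p(D)=0$ as the right-divisibility $\partial^p-\partial=B_{\mathfrak{p}}L_{\mathfrak{p}}$ in the twisted polynomial ring $\Or(\mathcal{X}_{\mathfrak{p}})[\partial;D]$ and conclude from multiplicativity of the $q$-adic Gauss valuation that $v_q(L_{\mathfrak{p}})=0$, which is exactly the desired integrality. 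Both arguments pivot on the same characteristic-$p$ identity $D^p=D$ (the paper uses it implicitly when it bounds $\nabla(D^p)w=\nabla(D)w$), and both need only infinitely many good primes. The one point you rightly flag --- multiplicativity of the coefficientwise Gauss valuation on $\Or(\mathcal{X}_{\mathfrak{p}})[\partial;D]$ --- does hold for the reasons you give: $D=t\tfrac{d}{dt}$ satisfies $v_q(D(f))\ge v_q(f)$, so the valuation filtration is a ring filtration whose associated graded is the twisted polynomial ring $K_{\mathfrak{p}}[t^{\pm1},\bar q^{\pm1}][\bar\partial;\bar D]$ over a domain, hence a domain; this is standard in the $p$-adic differential equations literature (cf.\ the norms on twisted polynomials in \cite{Kedlaya}). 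What your route buys is the elimination of the field extension, the eigenvector, and the delicate coordinatewise Claim of the paper; what the paper's route buys is complete self-containedness (no appeal to the noncommutative Gauss-norm machinery) and, as a byproduct of the rescaling, some finer information about where a hypothetical unbounded coefficient would sit. Your final spreading-out step (a fixed nonzero leading $q$-coefficient in $K$ survives reduction modulo almost all $\mathfrak{p}$) matches the paper's standing conventions on reducing data modulo primes and is fine.
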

Note that we only need infinitely many $p$-curvatures to vanish. 
\begin{proof}
Let the $n$ be the dimension of $V$. Suppose that the connection matrix (with respect to our cyclic basis $\bf{e}$) is 
\begin{equation*}
A=
\left(
\begin{array}{cccc}
&&& f_0\\
1& &&f_1\\
&\ddots & &\vdots\\
&&1&f_{n-1}
\end{array}
\right)
\end{equation*}

Let $f_k(t) = \sum a_{k,i}t^i$. By replacing $t$ by $t^{1/n!}$, we may assume that $a_{k,i} = 0$ unless $n!\vert i$. We also assume that some $a_{k,i}$ has size bigger than 1. Let $\ell = \max_{k,i}{ |a_{k,i}|^{\frac{1}{n-k}}}$, and let $\nu = \max_{|a_{k,i}| = \ell^{n-k}}{ (\frac{i}{n-k})}$. 
Consider the following basis ${\bf f}$ of the vector bundle
\begin{equation*}
{\bf f} = 
\left(
\begin{array}{cccc}
1&&&\\
&t^{-\nu}&&\\
&&\ddots&\\
&&&t^{-(n-1)\nu}
\end{array} 
\right)
{\bf e} = T {\bf e}
\end{equation*}

A direct computation shows that 
\begin{equation*}
\nabla(D){\bf f} = B{\bf f}
\end{equation*}
where 
\begin{equation*}
B=
\left(
\begin{array}{cccc}
0&&&\\
&-\nu&&\\
&&\ddots&\\
&&&-(n-1)\nu
\end{array}
\right)
+
\left(
\begin{array}{cccc}
&&& t^{-(n-1)\nu}f_0\\
t^{\nu}&&&t^{-(n-2)\nu}f_1\\
&\ddots & &\vdots\\
&&t^{\nu}&f_{n-1}
\end{array}
\right)
\end{equation*}

The entries of $B$ can still be reduced modulo almost all primes $\mathfrak{p} \subset \mathcal{O}$. Let $t^{-(n-k -1)\nu}f_k(t) = \sum b_{k,i}t^i$. The definition of $\nu$ implies that $|b_{k,i}| < \ell^{n-k}$ for $i > \nu$, and that $|b_{k,i}| \le \ell^{n-k}$, with equality holding for at least some pair $(k, -\nu)$. Write the connection matrix $B$ as $\sum\nolimits_{i \in \mathbb{Z}}t^i B_i$. The $B_i$ have the property that as $|i| \rightarrow \infty$, $B_i \rightarrow \bf{0}$. For $i$ different from $0, \nu$, $B_i$ has non-zero entires only in its last column. We have that \\
\begin{equation*}
B_0=
\left(
\begin{array}{cccc}
0&&&b_{0,0}\\
&-\nu&&b_{1,0}\\
&&\ddots&\\
&&&-(n-1)\nu + b_{n-1,0}
\end{array}
\right)
\end{equation*}
\\and \\
\begin{equation*}
B_{\nu}=
\left(
\begin{array}{cccc}
&&& b_{0,\nu}\\
1& &&b_{1,\nu}\\
&\ddots & &\vdots\\
&&1&b_{n-1,\nu}
\end{array}
\right)
\end{equation*}
\\

We now reduce modulo a prime $\mathfrak{p} \subset \mathcal{O}$, large enough so that the sizes of the $b_{k,\nu}$ don't change when reduced modulo $\mathfrak{p}$. Suppose also, that the $p$-curvature vanishes, and that $p$ is larger than $2n$. For ease of notation, we use the same terms to denote the connection matrix and its entries modulo $\mathfrak{p}$. 

The characteristic polyomial of $B_{\nu}$ splits over a separable extension of $K_\mathfrak{p}((q))$ (because $p>2n$). Therefore, by \cite[Lemma 3]{Kedlaya1}, there exists an integer $m$ such that the field $L = \overline{K_\mathfrak{p}}((q^{1/m}))$ contains all the eigenvalues of $B_{\nu}$. 

In the remainder of the proof, we establish the following statement, which supplies the necessary contradiction. 

\begin{statement}
There exists a vector $w$ with entries in L, such that $\nabla(D^p)(w) \neq \nabla(D)^p(w)$. 
\end{statement}

 Let $\lambda$ be an eigenvalue with the largest norm. Clearly, $|\lambda| = \ell$. Let $w = [\theta_0, ... , \theta_{n-1}]^t$ be an eigenvector with eigenvalue $\lambda$. We may assume that $|\theta_{n-1}|=1$, beacuse the last coordinate of an eigenvector of $B_{\nu}$ with non-zero eigenvalue cannot be $0$. It is easy to check that $|\theta_k| \le \ell^{n-1-k}$. 

Let $\nabla(D)^p w = \sum w_i t^i$. We describe $w_{\nu p}$ as a sum of terms as follows: 
\begin{equation*}
w_{\nu p}t^{\nu p} = \sum_{W \in \mathcal{I}} Ww
\end{equation*}
where $\mathcal{I}$ is the set of all length $p$ words in the letters $B_i$ and $D$, with the property that the sum of all the subscripts $i$ occuring in $W$ is $\nu p$. The word $W$ acts on $w$ as follows: $B_i$ multiplies vectors by $B_i t^i$, and $D$ just acts on the power of $t$ in the usual way.

For a vector $v$, let $v[k]$ denote its $(k+1)^{th}$ entry (therefore, the last entry of $v$ would be denoted by $v[n-1]$). We will show that the word $W_0 = B_{\nu}B_{\nu} \hdots B_{\nu}$ has the property that $W_0w[n-1]$ is strictly larger than $Ww[n-1]$, $W_0 \neq W\in \mathcal{I}$. To that end, we make the following claim, which we prove after finishing the proof of this propostion.

\begin{Claim}
Suppose that $vt^a = [\phi_0, ... ,\phi_{n-1}]^t$ is a vector with the property that for all $k$, $|\phi_k| \le$ $(resp.$ $<)$ $\ell^{n-k-1} |\theta_{n-1}|$. Then, the coordinates of $\lambda w$  and $C v$ satisfy the same inequalities, where $C$ stands for either $B_i$, or $D$. 
\end{Claim}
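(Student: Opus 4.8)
The plan is to prove the Claim by a direct ultrametric computation, splitting into cases according to whether the letter $C$ is $D$, is $B_i$ with $i\neq 0,\nu$, is $B_0$, or is $B_\nu$. First I would record the two elementary size estimates that the definitions of $\ell$ and $\nu$ force. Writing $t^{-(n-k-1)\nu}f_k=\sum_i b_{k,i}t^i$, the definition of $\nu$ gives $|b_{k,i}|\le \ell^{n-k}$ for every $i$, with strict inequality $|b_{k,i}|<\ell^{n-k}$ whenever $i>\nu$; also the subdiagonal entries of $B_\nu$ are $1$, of size $1\le \ell^{n-k}$. Second, every integer -- in particular each diagonal entry $-k\nu$ of $B_0$ and the constant $a$ by which $D$ multiplies a monomial $t^a$ -- has $\mathfrak{p}$-adic absolute value $\le 1$ after reduction. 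I would also use throughout that the hypothesis on $vt^a$ at $k=n-1$ reads $|\phi_{n-1}|\le|\theta_{n-1}|$ (resp. $<$), since the bound $\ell^{n-k-1}|\theta_{n-1}|$ equals $|\theta_{n-1}|$ there.

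Granting these, each case is one application of the ultrametric inequality. For $C=D$ one has $(Dv)_k=a\phi_k$ with $|a|\le1$, so $|(Dv)_k|\le|\phi_k|\le\ell^{n-k-1}|\theta_{n-1}|<\ell^{n-k}|\theta_{n-1}|$, the last step using $\ell>1$ (this is precisely where ``some $a_{k,i}$ has size $>1$'' enters, and it shows $D$ is \emph{unconditionally} strictly contracting). For $C=B_i$ with $i\neq0$, only the last column is nonzero, so $(B_iv)_k=b_{k,i}\phi_{n-1}$ and $|(B_iv)_k|=|b_{k,i}|\,|\phi_{n-1}|\le\ell^{n-k}|\theta_{n-1}|$ (resp. $<$), with the inequality strict \emph{unconditionally} when $i>\nu$. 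For $C=B_0$, $(B_0v)_k=-k\nu\,\phi_k+b_{k,0}\phi_{n-1}$, and the ultrametric inequality together with $|k\nu|\le1$ gives $|(B_0v)_k|\le\max(|\phi_k|,|b_{k,0}|\,|\phi_{n-1}|)\le\ell^{n-k}|\theta_{n-1}|$ (resp. $<$). For $C=B_\nu$, the zeroth coordinate is $b_{0,\nu}\phi_{n-1}$ of size $\le\ell^n|\theta_{n-1}|$ (resp. $<$), and for $k\ge1$ it is $\phi_{k-1}+b_{k,\nu}\phi_{n-1}$, whose two terms have size $\le\ell^{n-(k-1)-1}|\theta_{n-1}|=\ell^{n-k}|\theta_{n-1}|$ and $\le\ell^{n-k}|\theta_{n-1}|$ respectively (resp. $<$), so the sum is $\le\ell^{n-k}|\theta_{n-1}|$ (resp. $<$). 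Finally $\lambda w$ has $k$-th coordinate $\lambda\theta_k$ of size $\ell|\theta_k|\le\ell\cdot\ell^{n-k-1}|\theta_{n-1}|=\ell^{n-k}|\theta_{n-1}|$ (with equality precisely at $k=n-1$), which is the same bound; the identical computation applies with $w$ replaced by $vt^a$ when the hypothesis is strict.

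I do not expect a genuine obstacle: the whole content is bookkeeping of $q$-adic sizes. The two points needing care are, first, keeping the ``resp. strict'' alternative straight -- in particular isolating the observation that $D$ and the letters $B_i$ with $i>\nu$ contract strictly even from the non-strict hypothesis -- since this is exactly what will later force $W_0=B_\nu^{\,p}$ to be the unique word in $\mathcal{I}$ whose contribution to the last coordinate of $w_{\nu p}$ attains size $\ell^{\,p}$, every other word contributing strictly less; and second, checking that the twist by $(n-k-1)\nu$ is calibrated so that in $B_\nu$ the subdiagonal $1$ and the size-$\ell^{n-k}$ tail coefficient both sit in degree $t^{\nu}$, which is what singles $B_\nu$ out as the critical letter.
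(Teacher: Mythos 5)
Your proposal is correct and follows essentially the same route as the paper: a case-by-case ultrametric estimate for $C=D$, $C=B_i$ ($i\neq 0,\nu$), $C=B_0$, and $C=B_\nu$, using $\ell>1$, the bound $|b_{k,i}|\le\ell^{n-k}$ (strict for $i>\nu$), and the shift of index in the subdiagonal of $B_\nu$. Your explicit tracking of which letters contract strictly is a useful addition for the subsequent application, but the argument itself is the paper's.
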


The proposition follows from the claim as follows: let  $w^j$ and $w_0^j$ be the vectors obtained by applying the first $j$ letters of $W$ and $W_0$ respectively on $w$. The claim implies $|w^j[k]| \le \ell^{n-k-1}|w_0^j[k]|$ for any $0\le k \le n-1$. However, $W$ differing from $W_0$, must contain the letter $D$, or a letter of the form $B_i$ where $i>\nu$. In either case, such a letter would render the inequality strict, i.e. if such a letter first occured at the $j_0^{th}$ stage, then $|w^{j_0}[k]| < \ell^{n-k-1}|w_0^{j_0}[k]|$. According to the claim, this said strictness would persist through the application of the rest of the word, i.e. $|w^{j}[k]| < \ell^{n-k-1}|w_0^{j}[k]|$, for $j \ge j_0$. Therefore, $|w_{\nu p}[n-1]| = |B_{nu}^p w [n-1]| = \ell^p$. All the entries of $\nabla(D)w$ are bounded by $\ell^{2n}$, which establishes the statement, as required.
\end{proof}

\begin{proof}[Proof of Claim]
The claim is clearly true if $C$ stands for $D$, because $\ell > 1$ and applying $D$ doesn't increase the size of any of the coordinates. For the rest of the proof, we ignore the factor of $t$, as it changes nothing. If $C$ stands for $B_i$ for $i$ different from $0$ and $-\nu$, then $|Cv[k]| = |b_{i,k}\phi_{n-1}| \le$ $(resp. $ $<) \ell^{n-k}|\theta_{n-1}|$, as required. If $L = B_0$, $Cv[k] = -k\nu\phi_k + b_{k,0}\phi_{n-1}$, which is less than $\ell^{n-k}|\theta_{n-1}|$ because $|\phi_{n-1}| \le$ $(resp.$ $<) |\theta_{n-1}|$ and $|b_{k,0}| \le \ell^{n-k}$, as required.

Finally, suppose that $C = B_{\nu}$. For brevity, let $b_{k}=b_{k,-\nu}$. We have that 
\begin{equation*}
Cv = [b_0\phi_{n-1}, \phi_0 + b_1\phi_{n-1}, ... ,\phi_{n-2} + b_{n-1}\phi_{n-1}]^t.
\end{equation*}
We must show that $|Cv[k]| \le$ $(resp.$ $<)$ $ \ell^{n-k}|\theta_{n-1}|$. This is clear for $Cv[0]$. We have $Cv[k] = \phi_{k-1} + b_k\phi_{n-1}$. The bound on $b_k$ shows that $|b_k\phi_{n-1}| \le$ $(<)$ $\ell^{n-k}|\theta_{n-1}|$. Therefore it suffices to show that $|\phi_{k-1}| \le$ $(<)$ $\ell^{n-k}|\theta_{n-1}|$. This proves the claim, because  $\phi_{k-1}$ satisfies the required inequality by hypothesis.
\end{proof}

By appropriately substituting $q^{\alpha}t$ for $t$, the corollary below directly follows:
\begin{Corollary}\label{grow}
Let $(V, \nabla)$ be a vector bundle with connection along with a cyclic vector $v$ on $\Xr$, such that the $p$-curvatures vanish for infinitely many $p$. Then, the entries of the connection matrix (with respect to the above cyclic basis) satisfy $\eqref{growthcond}$ and therefore lie in $\bOr(\Xr)$. 
\end{Corollary}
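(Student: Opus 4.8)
The plan is to deduce the corollary from the Proposition, which already handles the radius-one annulus $\X$, by restricting the connection to each of the two boundary circles of $\Xr$ and rescaling each of them onto $\X$. The starting observation is that, for $f=\sum a_k t^k\in\Or(\Xr)$, the growth condition \eqref{growthcond} says exactly that the supremum norm of $f$ on the annulus is at most $1$, and that this supremum is the maximum of the supremum norms of $f$ over the two boundary circles $\{|t|=r_1\}$ and $\{|t|=r_2\}$ (for each exponent $k$ one has $\sup_{|t|\in[r_1,r_2]}|a_kt^k|=|a_k|\max(r_1^k,r_2^k)$). So it is enough to check, for $j=1,2$ separately, that the entries $f_0,\dots,f_{n-1}$ of the connection matrix have supremum norm $\le 1$ on the circle $\{|t|=r_j\}$.

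Fix $j\in\{1,2\}$ and write $r_j=|q|^{\alpha_j}$ with $\alpha_j\in\Q$, as permitted by the standing rational-radius convention. First I would restrict $(V,\nabla)$ together with its cyclic vector $v$ to the closed (degenerate) rational-radius sub-annulus $\{|t|=r_j\}\subset\Xr$: the restriction of $v$ stays a cyclic vector, the associated connection matrix is still the companion matrix with the \emph{same} entries $f_k$, the data still reduces modulo almost all $\mathfrak{p}\subset\mathcal{O}$, and the $p$-curvatures still vanish for the same infinite set of primes $p$. Next I would apply the substitution $t\mapsto q^{\alpha_j}t$ (adjoining a root of $q$ if $\alpha_j\notin\Z$), which by the remarks preceding Theorem \ref{thm:main} identifies $\{|t|=r_j\}$ with $\X$ and carries along all of the above properties. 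The Proposition now applies and shows that the entries of the connection matrix lie in $\bOr(\X)$, i.e. are bounded by $1$ on $\X$; transported back through the substitution this says precisely that $|a_{k,i}|\,r_j^i\le 1$ for every coefficient $a_{k,i}$ of $f_k$, which is the desired bound on the circle $\{|t|=r_j\}$.

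Running this for $j=1$ and $j=2$ and combining the two conclusions yields \eqref{growthcond} for all the $f_k$, so the entries of the connection matrix lie in $\bOr(\Xr)$, as claimed. There is essentially no new obstacle here, since all of the real work is done by the Proposition; the only point demanding care is the bookkeeping, namely verifying that passing to a boundary circle and then rescaling (including the implicit adjunction of a root of $q$) genuinely preserves the cyclic vector, the companion form of the connection matrix, the reducibility modulo almost all $\mathfrak{p}$, and the vanishing of infinitely many $p$-curvatures — all of which is already covered by the conventions set up at the start of the section.
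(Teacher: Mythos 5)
Your proposal is correct and is essentially the paper's own argument: the paper deduces the corollary from the Proposition precisely "by appropriately substituting $q^{\alpha}t$ for $t$," which is exactly your rescaling of each boundary circle $\{|t|=r_j\}=\{|t|=|q|^{\alpha_j}\}$ onto $\X$. Your additional bookkeeping (that \eqref{growthcond} is the conjunction of the two boundary-circle bounds, and that the substitution preserves the cyclic vector, the companion form under $D=t\frac{d}{dt}=s\frac{d}{ds}$, integrality, and the vanishing of the $p$-curvatures) just makes explicit what the paper leaves implicit.
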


\subsection{Existence of rigid solutions in a special case}
We devote this subsection to proving Theorem \ref{thm:use}. 
\begin{proof}[Proof of Theorem \ref{thm:use}]
Reducing $(V,\nabla)$ modulo $q$, we get a vector bundle with connection on $\mathbb{G}_m$ over $K$, with almost all $p$-curvatures vanishing. After sufficient pullback, we may assume that $(V,\nabla)$ has a full set of solutions modulo $q$ (the $p$-curvature conjecture for $\mathbb{G}_m$ is known -- see \cite{Katz}, or \cite{Andre}, or \cite{Bost}). Therefore, changing basis by an element of $GL_n(K)[t,t^{-1}]$, suppose that the connection equals $\nabla(D) \textbf{e} = A \textbf{e}$, where $A \in M_n(\bOr{\X})$ satisfies $A \equiv \textbf{0} \mod{q}$. We now use an inductive argument to find a basis with respect to which the connection matrix $\nabla(D)$ is constant (we will use the exact same argument in the proof of the more general theorem too): assume that modulo $q^m$, the connection matrix is constant, and trivial modulo $q$. Then, modulo $q^{m+1}$, the connection matrix is of the form $[A_{0} + q^mB_m(t)]dt$, where $A_{0}$ has constant entries (which are $0$ modulo $q$), and $B_m(t)$ is a matrix with entries in $R[t^{-1},t]$, with no constant term. We pick a new set of coordinates
\begin{equation*}
{\bf f_{m+1}} = (I - q^mC_m(t)){\bf f_m}
\end{equation*}
where $\bf{f_m}$ is the old set of coordinates and where $C_m(t)$ satisfies 

\begin{equation*}
D(C_m(t)) = B_m(t)
\end{equation*}
(note that we assumed $B_m(t)$ has no constant term). 

Modulo $q^{m+1}$,
\begin{equation*}
\begin{array}{rcl}
 \nabla(D)(\bf{f_{m+1}}) &=& ((A_{0}t^{-1} + q^nB_m(t))(I - q^mC_m(t)) - q^m\displaystyle D(C_m(t)) \bf{f_m}\\[.15in]
&=&A_{0} (I - q^mC_m(t))\displaystyle\bf{f_m} \\[.15in]
&=& (I - q^mC_m(t))^{-1}A_{0} (I - q^mC_m(t))\displaystyle \bf{f_{m+1}}.

\end{array}
\end{equation*}
Therefore, with respect to the coordinates $\bf{f_{m+1}}$, the connection matix modulo $q^{m+1}$ is just $A_{0}$. The product \begin{equation*}
\prod\limits_{m=1}^{\infty}(I - q^mC_m(t))
\end{equation*}
clearly converges on the annulus $|t| = 1$. Therefore, $\nabla(D)\textbf{f} = A_0 \textbf{f}$, where $A_0 \in qM_n(K[|q|])$ and  $\textbf{f} = \prod\limits_{m=1}^{\infty}(I - q^mC_m(t)) \textbf{e}$. While we might need to invert infinitely many integers in order to change basis from $\textbf{e}$ to $\textbf{f}$, note that at each stage (that is, modulo $q^m$) we only invert finitely many primes. Therefore, with respect to $\textbf{f}$, for almost every prime ideal $\mathfrak{p}$, the vector bundle with connection has vanishing $p$-curvatures (modulo $q^m$). This is true for every $m$, but the set of primes to be omitted might grow with $m$. The following lemma implies that $A_0 = \textbf{0}$. The theorem follows.
\end{proof}

\begin{Lemma}\label{A_0}
Let $A_0 \in qM_n(K[|q|])$, and let $V$ be a vector bundle along with a basis $\textbf{f}$ and a connection $\nabla$, such that the connection matrix of $\nabla$ is $A_0$. Suppose that for each $m \in \N$, the $p$-curvatures of $\nabla$ vanish modulo ${q^m}$ for all primes outside a finite set (which could depend on $m$). Then $A_0 =\textbf{0}$ and $\textbf{f}$ is a basis of horizontal sections. 
\end{Lemma}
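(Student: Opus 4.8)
The plan is to analyze the $p$-curvature of the constant-matrix connection $\nabla(D)\mathbf{f} = A_0 \mathbf{f}$ directly, working one $q$-adic layer at a time. Since the derivation is $D = t\frac{d}{dt}$ and the connection matrix $A_0 \in qM_n(K[|q|])$ has entries independent of $t$, we have $D^p = D$ on functions (more precisely, $t\frac{d}{dt}$ satisfies $D^p \equiv D \pmod p$ acting on $K_{\mathfrak p}[t,t^{-1}]$-valued functions, since $D(t^k) = kt^k$ and $k^p \equiv k \bmod p$). Therefore $\nabla(D^p) = \nabla(D) = A_0$ (reduced mod $\mathfrak p$), while $\nabla(D)^p$ is the $p$-fold iterate of the operator $v \mapsto A_0 v + D(v)$. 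Expanding, $\nabla(D)^p(\mathbf{f})$ is computed by the noncommutative expansion of $(A_0 + D)^p$ applied to the constant vector; since $D$ kills constants and $A_0$ is constant, one gets $\nabla(D)^p \mathbf{f} = A_0^p \mathbf{f}$. Hence the $p$-curvature is $\psi_p(D) = A_0^p - A_0$ (as a matrix over $K_{\mathfrak p}[|q|]$, once reduced).

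Next I would run the layer-by-layer argument. Write $A_0 = \sum_{m \ge 1} q^m A^{(m)}$ with $A^{(m)} \in M_n(K)$, and suppose for contradiction that $A_0 \neq 0$; let $m_0 \ge 1$ be minimal with $A^{(m_0)} \neq 0$. Then modulo $q^{m_0 + 1}$ we have $A_0 \equiv q^{m_0} A^{(m_0)}$, and since $m_0 \ge 1$ the product term $A_0^p \equiv 0 \pmod{q^{m_0+1}}$ (as $p \ge 2$, so $A_0^p$ has $q$-order at least $2m_0 > m_0$). Thus the vanishing of $\psi_p$ modulo $q^{m_0+1}$ forces $q^{m_0} A^{(m_0)} \equiv 0 \pmod{q^{m_0+1}}$ after reduction mod $\mathfrak p$, i.e. $A^{(m_0)} \equiv 0$ in $M_n(K_{\mathfrak p})$. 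By hypothesis this holds for all $\mathfrak p$ outside a finite set (depending only on $m = m_0 + 1$); but a fixed nonzero matrix $A^{(m_0)} \in M_n(K)$ cannot reduce to $0$ modulo almost all primes $\mathfrak p$ — some entry is a nonzero element of $K$, whose $\mathfrak p$-adic valuation is zero for all but finitely many $\mathfrak p$, so its reduction is nonzero. This contradiction shows $A^{(m_0)} = 0$, hence $A_0 = 0$, and then $\nabla(D)\mathbf{f} = 0$, so $\mathbf{f}$ is a basis of horizontal sections.

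I expect the only genuinely delicate point to be the clean identification $\psi_p(D) = A_0^p - A_0$ — specifically justifying $\nabla(D)^p\mathbf{f} = A_0^p\mathbf{f}$ and $\nabla(D^p) = \nabla(D)$ in the present setting, where "functions" are elements of $\bOr(\X)/(q^m)$ and $D = t\frac{d}{dt}$. The identity $D^p = D$ on $\F_p[t,t^{-1}]$ is standard (it is exactly the statement that the $p$-curvature of $d/dt$-type operators with constant coefficients is governed by Fermat), and the combinatorial expansion of $(A_0 + D)^p$ collapses because $A_0$ is a constant matrix and $D$ annihilates the constant vector $\mathbf{f}$; making this rigorous is a short computation rather than a real obstacle. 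Everything else is the trivial observation that a nonzero rational function (or element of $K$) is a $\mathfrak p$-adic unit for almost all $\mathfrak p$, together with the minimal-order bookkeeping in $q$.
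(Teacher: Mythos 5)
Your proposal is correct and follows essentially the same route as the paper: both rest on the identity $\psi_p(D) = A_0^p - A_0$ (via $D^p = D$ and the collapse of $\nabla(D)^p$ on the constant-coefficient basis) together with the fact that a nonzero element of $K$ is a unit modulo almost all $\mathfrak{p}$. The only cosmetic difference is that you isolate the lowest-order $q$-coefficient and work modulo $q^{m_0+1}$ where $A_0^p$ vanishes for every $p\ge 2$, whereas the paper fixes $m$ and takes $p$ large enough that the topologically nilpotent $A_0$ satisfies $A_0^p \equiv \mathbf{0} \bmod q^m$; these are interchangeable pieces of bookkeeping.
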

\begin{proof}
$A_0$ being $\textbf{0}$ modulo $q$, is topologically nilpotent, and therefore nilpotent modulo $q^m$. For large enough primes $p$, we have that the $p$-curvature (modulo $q^m$) vanishes, and that $A_0^p \equiv \textbf{0} \mod{q^m}$. We have 
\begin{equation*}
\begin{array}{rcl}
\displaystyle {\bf 0} &=& \nabla(D)^p - \nabla(D^p)\\[.12in]
\displaystyle & = & A_0^p - A_0 \\[.12in]
\displaystyle & = &A_0
\end{array}
\end{equation*}
with all equalities read modulo $(\mathfrak{p},q^m)$. This means that $A_0$ (mod $q^m$) is $\bf{0}$ modulo $\mathfrak{p}$ for almost all primes $\mathfrak{p} \subset \mathcal{O}$, and so $A_0 \equiv \textbf{0} \mod{q^m}$. Now letting $m$ tend to infinity, we see that $A_0 = \textbf{0}$. 

The claim about $\textbf{f}$ follows immediately. 
\end{proof}

\subsection{Proof of the $p$-curvature conjecture for rigid-analytic annuli}
We first prove a lemma. 

\begin{Lemma}\label{lem}
Let $(V,\nabla)$ be a vector bundle with connection on $\Xr$, such that $r_1 \le 1 \le r_2$. Suppose that the entries of the connection matrix (with respect to a chosen basis ${\bf e}$) $A = \sum A_n t^n$ satisfy $\eqref{growthcond}$. Suppose further, that $A$ modulo $q$ is constant and semisimple with integer eigenvalues. Then there exists
\begin{enumerate}
\item A basis $\textbf{f}$ of $V$ such that the connection matrix is trivial modulo $q$. 
\item A closed subannulus $\Yr$ of $\Xr$ with that $\Yr \supset \X$, such that the connection matrix with respect to $\textbf{f}$ satisfies \ref{growthcond} on $\Yr$. Further, $r'_1 < 1$ if $r_1 < 1$, and $r'_2>1$ if $r_2 >1$.
\end{enumerate}
\end{Lemma}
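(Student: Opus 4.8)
The plan is to reach $\textbf{f}$ by composing two explicit gauge changes --- a constant one diagonalizing the reduction of $A$ modulo $q$, and then a diagonal one built from the (integer) eigenvalues --- and afterwards to choose $\Yr$ just large enough to absorb the shift of powers of $t$ that the second gauge introduces off the diagonal.

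First I would diagonalize the reduction. Put $\bar A := A \bmod q$; by hypothesis $\bar A$ is a constant matrix in $M_n(K)$, semisimple, with eigenvalues $\lambda_1,\dots,\lambda_n \in \Z$. Since these eigenvalues lie in $K$, there is an invertible $P \in M_n(K)$ with $P\bar A P^{-1} = \Lambda := \mathrm{diag}(\lambda_1,\dots,\lambda_n)$. Changing basis by the constant matrix $P$ replaces $A$ by $A' := PAP^{-1}$ (no $D$-term appears since $P$ is constant), and because every nonzero element of $K$ has $q$-adic absolute value $1$, this operation preserves \eqref{growthcond} on $\Xr$. Now $A' \equiv \Lambda \pmod q$; in particular $(A')_{ij} \in q\,\bOr(\Xr)$ for all $i \neq j$. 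Next, since $D = t\tfrac{d}{dt}$ gives $D(t^{-\lambda_i}) = -\lambda_i t^{-\lambda_i}$, I would apply the diagonal gauge $T := \mathrm{diag}(t^{-\lambda_1},\dots,t^{-\lambda_n})$ --- legitimate precisely because the $\lambda_i$ are integers --- and set $\textbf{f} := TP\textbf{e}$. The connection matrix with respect to $\textbf{f}$ is $B := -\Lambda + TA'T^{-1}$, so
\begin{equation*}
B_{ii} = (A')_{ii} - \lambda_i, \qquad B_{ij} = t^{\lambda_j-\lambda_i}(A')_{ij}\ \ (i\neq j);
\end{equation*}
reducing modulo $q$ yields $B \equiv -\Lambda + \Lambda = \textbf{0}$, which is part (1).

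It remains to choose $\Yr$ so that $B$ satisfies \eqref{growthcond} on it. Set $M := \max_i\lambda_i - \min_i\lambda_i \ge 0$; if $M = 0$ then $TA'T^{-1} = A'$ and $\Yr = \Xr$ works, so assume $M > 0$. The diagonal entries already satisfy \eqref{growthcond} on all of $\Xr$, since $(A')_{ii} \in \bOr(\Xr)$ and $|\lambda_i|\le 1$. For $i \neq j$, from $(A')_{ij} \in q\,\bOr(\Xr)$ we get $|(A')_{ij}(x)| \le |q|$ at every point $x$ of $\Xr$, so on the circle $|t| = \rho$ (for $r_1 \le \rho \le r_2$),
\begin{equation*}
|B_{ij}| \;\le\; |q|\,\rho^{\lambda_j - \lambda_i} \;\le\; |q|\,\max(\rho^{M},\rho^{-M}).
\end{equation*}
I would then take the rational radii $r'_2 := \min\!\big(r_2,\,|q|^{-1/M}\big)$ and $r'_1 := \max\!\big(r_1,\,|q|^{1/M}\big)$, which force $|q|\,\rho^{\lambda_j-\lambda_i} \le 1$ for all $\rho \in [r'_1,r'_2]$, so $B_{ij}$ is bounded by $1$ on $\Yr$. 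Since for a function on $\Yr$ the validity of \eqref{growthcond} is equivalent, by the usual log-convexity of sup-norms on an annulus, to being bounded by $1$ on $\Yr$, the matrix $B$ satisfies \eqref{growthcond} on $\Yr$. Finally $r_1 \le r'_1 \le 1 \le r'_2 \le r_2$, so $\X \subset \Yr \subset \Xr$, and $|q|^{1/M}<1<|q|^{-1/M}$ gives $r'_1 < 1$ whenever $r_1 < 1$ and $r'_2 > 1$ whenever $r_2 > 1$. This is part (2).

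The one delicate point --- and the place where integrality of the eigenvalues really earns its keep --- is this last step: the monomials $t^{\lambda_j-\lambda_i}$ would by themselves ruin boundedness arbitrarily close to each boundary circle of $\Xr$, and what rescues the estimate is exactly the factor of $q$ that dividing out the diagonal reduction $\Lambda$ leaves in the off-diagonal entries, buying a definite collar $|q|^{\pm 1/M}$ around the unit annulus. I do not expect a conceptual obstacle here; the work lies in tracking the two radii simultaneously and in the (routine) equivalence between \eqref{growthcond} on $\Yr$ and a sup-norm bound on $\Yr$.
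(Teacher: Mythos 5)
Your overall strategy coincides with the paper's --- diagonalize $A \bmod q$ by a constant matrix over $K$, then apply the diagonal gauge $\mathrm{diag}(t^{-\lambda_1},\dots,t^{-\lambda_n})$ --- and your verification of part (1) is correct. The gap is in part (2), at the step where you assert that $(A')_{ij} \in q\,\bOr(\Xr)$ for $i \neq j$, and hence that $|(A')_{ij}| \le |q|$ at every point of $\Xr$. The congruence $A' \equiv \Lambda \pmod q$ is a statement about the coefficients of the Laurent expansion: each coefficient $b_k$ of an off-diagonal entry satisfies $|b_k| \le |q|$. This bounds $(A')_{ij}$ by $|q|$ only on the unit circle $|t| = 1$ (i.e. $(A')_{ij} \in q\,\bOr(\X)$); it does not bound it by $|q|$ on all of $\Xr$, because $|b_k|\,r_l^{\,k}$ may be as large as $1$ when $r_l \neq 1$. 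Concretely, take $r_1 = |q|$, $r_2 = 1$, $(A')_{ij} = q\,t^{-1}$ and $\lambda_j - \lambda_i = -2$ (so $M = 2$). Then $(A')_{ij}$ satisfies \eqref{growthcond} on $\Xr$ and vanishes mod $q$, but $B_{ij} = q\,t^{-3}$, and on your proposed inner radius $r'_1 = \max(r_1, |q|^{1/M}) = |q|^{1/2}$ one gets $|B_{ij}| = |q|\cdot|q|^{-3/2} = |q|^{-1/2} > 1$. So the universal radii $|q|^{\pm 1/M}$ do not work; in this example the first admissible inner radius is $|q|^{1/3}$.

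The repair is either the paper's route --- argue coefficient by coefficient, using that for $i \neq j$ every coefficient of $(A')_{ij}$ has absolute value strictly less than $1$ and that the quantities $|b_k| r_1^{\,k}$ tend to $0$, and then choose $r'_1$ close enough to $1$ (note this makes $r'_1$ depend on the actual entries, not only on $M$ and $|q|$) --- or a correct use of the three-circle theorem: interpolate between the bound $1$ on $|t| = r_1$ and the bound $|q|$ on $|t| = 1$ to get $|(A')_{ij}| \le |q|^{1-s}$ on $|t| = r_1^{\,s}$, which yields the collar $\rho \ge |q|^{\alpha_1/(1+\alpha_1 M)}$ when $r_1 = |q|^{\alpha_1}$. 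Your closing observation that \eqref{growthcond} on $\Yr$ is equivalent to a sup-norm bound of $1$ on $\Yr$ is fine; the error is isolated in the claimed pointwise bound $|q|$ on the large annulus.
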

\begin{proof}
We remark that if $r_1 <1$ and $r_2 >1$, then $\eqref{growthcond}$ implies that $A$ modulo $q$ is constant. Suppose that $X$ is an element of $GL_n(K)$ which diagonalises $A$ modulo $q$. Pick ${\bf f_0} = X \bf{e}$. The connection matrix with respect to ${\bf f_0}$ is $X^{-1}AX$. Note that the entries (which we denote by $a_{kj}(t)$) of the new connection matrix (which we still denote by the letter $A$) satisfy $\eqref{growthcond}$ on the annulus with radii $r_i$, and that $A$ is now diagonal with integer entries, modulo $q$. 
Suppose that \\
\begin{equation*}
A_0 \equiv
\left(
\begin{array}{cccc}
d_0&&&\\
&d_1&&\\
&&\ddots & \\
&&&d_{n-1}
\end{array}
\right)
(mod \hspace{1 mm} q)
\end{equation*}\\
where the $d_i$ are the integer eigenvalues alluded to just above. Let $\textbf{f}$ be defined to be 
\begin{equation*}
{\bf f} =
\left(
\begin{array}{cccc}
t^{-d_0}&&&\\
&t^{-d_1}&&\\
&&\ddots & \\
&&&t^{-d_{n-1}}
\end{array}
\right)
{\bf f_0}
\end{equation*}\\
The connection matrix with respect to $\textbf{f}$ is trivial modulo $q$, thus proving the first assertion. We now demonstrate the existence of $\Yr$. Denote by $a'_{kj}(t)$ the entries of the new connection matrix. For $k \neq j$, we have 
\begin{equation*}
a'_{kj}(t) = t^{d_k - d_j}a_{kj}(t),
\end{equation*}
 and we have that 
\begin{equation*}
a'_{kk}(t) = a_{kk}(t) - d_k.
\end{equation*}

The $r_i = 1$ case follows directly from the previous two equations, therefore suppose that $r_1 < 1$. The $a'_{kk}(t)$ satisfy $\eqref{growthcond}$ on $\Xr$, and the $a'_{kj}(t)$ satisfy the strict version of $\eqref{growthcond}$ on $\X$. Fixing $k \neq j$, suppose that $a_{kj}(t) = \sum b_i t^i$. Then, $a'_{kj}(t) = t^{k-j} \sum b_i t^i$. Let  $m=k-j$. We have
\begin{equation*}
|b_i|r_1^i \le 1, \\
|b_i| < 1
\end{equation*}
Both quantities tend to zero as $|i|$ tends to infinity. To demonstrate the existence of $\Yr$, we need to find some $r'_1 < 1$ so that the following equations hold for all values of $i$: 
\begin{equation*}
|b_i|{r'_1}^{i+m} \le 1, \\
|b_i| < 1
\end{equation*}

Note that if $m \ge 0$, then the inequalities automatically hold with $r'_1 = r_1$. Assume therefore, that $m < 0$. For $i \ge -m$, any $r'_1$ would work. As $|b_i| < 1$ for all $i$, it is possible to choose an $r'_1<1$ such that  $|b_i|{r'_1}^{i+m}<1$, for $0 \le i \le -m$. This would make sure that the required inequalities hold for $i \ge 0$. Further refining our choice so that ${r'_1}^{-1+m} \le r_1^{-1}$, it follows that the required inequalities hold for negative $i$ too, thereby demonstrating the existence of $r'_1$. The exact same argument works for $r_2$ as well, proving the lemma.

\end{proof}

\begin{proof}[Proof of Theorem \ref{thm:main}]
Let the connection matrix be $A = \sum A_it^i$. Choose $ r_1 \le r \le r_2$, for $r =|q|^{\alpha}$ ($\alpha \in \Q$). By replacing $t$ by $t/q^{\alpha}$, we may assume that $r=1$. The constant term of the new connection matrix stays the same. By Corollary \ref{grow}, the entries of the connection matrix have coefficients in $K[|q|]$. Reducing modulo $q$, we obtain a vector bundle with connection on the ring $K[t^{-1},t]$, the coordinate ring of $\mathbb{G}_m$ over $K$. 

According to Katz, the vector bundle with connection on $\mathbb{G}_m$ over $K$ has regular singularities at 0 and $\infty$. Therefore, with respect to a cyclic basis, the entries of the connection matrix $\nabla(D)$ have to be regular functions on $\mathbb{P}^1$ (this follows directly from \cite[Theorem 11.9]{Katz}), and so $\nabla(D)$ has to be a constant matrix. Therefore, $A \equiv A_0 \mod{q}$. Note that this does not depend on the initial choice of $r$. 

The $p$-curvatures vanishing imply that $A_0$ modulo $q$ is semisimple with rational eigenvalues. Pulling back from the rigid annulus by the map $t \mapsto s^m$ for some $m$, a common multiple of all the denominators,  we get a vector bundle with connection on the rigid annulus with inner and outer radii $r_1^{1/m}$ and $r_2^{1/m}$ respectively. With respect to the same basis, and the derivation $D = s\frac{d}{ds}$, we see that the connection matrix gets multiplied by a factor of $m$. For ease of notation, we replace $s$ by $t$ and $r_i^{1/m}$ by $r_i$. $A_0$ is therefore semisimple with integer eigenvalues, modulo $q$. We again remark hat the pullback does not depend on the $r$ chosen.

We now solve the differential equation in a subannulus containing $\X$. There exists a subannulus $\Yr$ of $\Xr$, satisfying the conclusions of Lemma \ref{lem}. Note that the $p$-curvatures still make sense and vanish for almost every $p$. This is because both the change of basis matrices used in the lemma make sense and are invertible modulo almost every $p$. 

We now use the same inductive argument as in the proof of Theorem \ref{thm:use}, the base case of which has just been demonstrated: letting $\textbf{f}_m$ and $C_m$ be as before, we change basis by the matrix $I - q^mC_m(t)$ to make $\nabla(D)$ constant modulo $q^{m+1}$. Therefore, with respect to the basis $\textbf{f} = \prod\limits_{m=1}^{\infty}(I - q^mC_m(t)) \textbf{e}$, the connection matrix is constant and satisfies the hypothesis of Lemma $\ref{A_0}$. The connection has a full set of solutions (by Lemma \ref{A_0}) wherever the product $\prod\limits_{m=1}^{\infty}(I - q^mC_m(t))$ converges.
According to the lemma below, this converges on a subannulus containing $|t|=1$, with inner and outer radii $r''_i$, where $r''_1 < 1$ if $r'_1$ is, and $r''_2 > 1$ if $r'_2$ is. Hence, it is possible to cover $\Xr$ by a finite number of closed subannuli such that on each subannulus there exists a full set of solutions. These solutions clearly glue on the intersections and therefore form global solutions on the entire rigid annulus. 
\end{proof}

\begin{Lemma}
Notation as in the proof of the theorem. The infinite product $\prod\limits_{m=1}^{\infty}(I - q^mC_m(t))$ converges on a subannulus $\mathcal{X}_{r''_1,r''_2} \supset \X$. Further, $r''_1 < 1$ if $r'_1$ is, and $r''_2 > 1$ if $r'_2$ is. 
\end{Lemma}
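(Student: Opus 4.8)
The plan is to show that the matrices $C_m(t)$ do not grow too fast in $m$, so that $q^m C_m(t)$ tends to zero uniformly on a slightly enlarged annulus. The key point is that $C_m$ is obtained from $B_m(t)$ by formally integrating: $D(C_m(t)) = B_m(t)$, i.e. if $B_m(t) = \sum_{i \neq 0} b_i t^i$ then $C_m(t) = \sum_{i \neq 0} (b_i/i) t^i$. Since $|1/i| \le 1$ in any non-archimedean valuation (the $q$-adic valuation on $K((q))$ is trivial on $K$, hence on the integers $i$), the formal antiderivative does not worsen any of the growth bounds: $C_m(t)$ satisfies on $\mathcal{X}_{r'_1,r'_2}$ exactly the same estimates \eqref{growthcond}-type bounds as $B_m(t)$, possibly after a harmless shrinking. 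So it suffices to control the growth of the $B_m(t)$.

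First I would make precise where the $B_m(t)$ live. By construction in the inductive step (as in the proof of Theorem \ref{thm:use}), the connection matrix after the first $m$ changes of basis has the form $[A_0 + q^m B_m(t)]$ with $A_0$ constant and $B_m(t)$ having coefficients in $\mathcal{O}^{\rig}_{\le 1}(\mathcal{X}_{r'_1,r'_2})$ — indeed this is built into Lemma \ref{lem}, whose output is precisely a basis in which the connection matrix satisfies \eqref{growthcond} on $\mathcal{X}_{r'_1,r'_2}$, and each change of basis $I - q^kC_k(t)$ with $C_k$ bounded by $1$ preserves (a slightly shrunk version of) this property since $\mathcal{O}^{\rig}_{\le 1}$ is a ring. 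So each $B_m(t)$ has its coefficients bounded by $1$ on $\mathcal{X}_{r'_1,r'_2}$; hence so does $C_m(t)$, and $q^m C_m(t)$ has coefficients of $q$-adic absolute value at most $|q|^m$ on $\mathcal{X}_{r'_1,r'_2}$.

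Next I would pick radii $r''_1, r''_2$ with $r'_1 < r''_1 < 1 < r''_2 < r'_2$ (and $r''_i = 1$ on whichever side has $r'_i = 1$), and estimate the sup-norm of $q^m C_m(t)$ on $\mathcal{X}_{r''_1,r''_2}$. Writing $C_m(t) = \sum_i c_{m,i} t^i$ with $|c_{m,i}| r'^{\,i}_j \le 1$, the sup-norm on the smaller annulus is $\sup_i |c_{m,i}| \max(r''^{\,i}_1, r''^{\,i}_2) \le \sup_i \max\big((r''_1/r'_1)^i, (r''_2/r'_2)^i\big)$, which is a finite constant $c < \infty$ independent of $m$ (since $r''_1/r'_1 > 1$ forces the exponent negative to be bounded, and $r''_2/r'_2 < 1$ likewise for positive exponents — the usual compactness of the closed annulus). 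Therefore $\|q^m C_m(t)\| \le c\,|q|^m \to 0$, so $\prod_{m=1}^\infty (I - q^m C_m(t))$ converges uniformly on $\mathcal{X}_{r''_1,r''_2}$, and its inverse converges there too (the partial products are all invertible with uniformly bounded inverses, since $I - q^mC_m$ is invertible over the affinoid ring with inverse $\sum_{k\ge 0}(q^mC_m)^k$). This is exactly the claimed subannulus, with $r''_1 < 1$ whenever $r'_1 < 1$ and $r''_2 > 1$ whenever $r'_2 > 1$.

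**The main obstacle** is bookkeeping rather than a deep difficulty: one must verify that the inductive construction genuinely keeps the $B_m(t)$ bounded by $1$ on a \emph{fixed} annulus, rather than on an annulus that shrinks at every step. This is where it matters that each change of basis $I - q^mC_m(t)$ has coefficients bounded by $1$ and reduces to the identity modulo $q$, so conjugating $A_0 + q^mB_m$ by it — which is what produces $B_{m+1}$ — stays inside $M_n(\mathcal{O}^{\rig}_{\le 1}(\mathcal{X}_{r'_1,r'_2}))$ with no loss, because the relevant geometric series $\sum_k (q^mC_m)^k$ converges in $\mathcal{O}^{\rig}_{\le 1}$. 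Once this stability is nailed down, the convergence of the infinite product is the elementary non-archimedean estimate above.
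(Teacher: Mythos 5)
Your first step --- the inductive bookkeeping showing that each $q^mC_m(t)$ satisfies the bound $\eqref{growthcond}$ on the \emph{fixed} annulus $\mathcal{X}_{r'_1,r'_2}$, using that integration divides by integers (units for the $q$-adic absolute value) and that the changes of basis preserve the estimates --- matches the paper and is fine. The gap is in the second step, where you assert that $C_m(t)$ itself (rather than $q^mC_m(t)$) is bounded by $1$ on $\mathcal{X}_{r'_1,r'_2}$, so that $q^mC_m(t)$ has sup-norm at most $|q|^m$ on all of that annulus. This is false: the coefficients of $B_m(t)$, hence of $C_m(t)$, lie in $K$ (they are the $t$-coefficients of the $q^m$-term of the connection matrix), and the $q$-adic absolute value is trivial on $K$. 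So every nonzero coefficient $c_{m,i}$ has $|c_{m,i}|=1$, and as soon as $C_m$ has a nonzero coefficient in some negative degree $i$ one gets $|c_{m,i}|\,r_1'^{\,i}=r_1'^{\,i}>1$ (similarly at the outer radius for positive degrees). What the induction actually delivers is only: $\|q^mC_m\|\le |q|^m$ on the unit circle $|t|=1$, but merely $\|q^mC_m\|\le 1$ on $|t|=r'_1$ and $|t|=r'_2$. Note that if your premise were correct, the product would converge on all of $\mathcal{X}_{r'_1,r'_2}$ and there would be no need for the lemma to shrink to $\mathcal{X}_{r''_1,r''_2}$; the shrinking is forced precisely because the $|q|^m$ decay is available only on $|t|=1$.

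The missing ingredient is an interpolation between these two circles: the paper applies the non-archimedean Hadamard three-circle theorem (\cite[Proposition 7.2.3]{Kedlaya}), which from $\|q^mC_m\|\le|q|^m$ on $|t|=1$ and $\|q^mC_m\|\le 1$ on $|t|=r'_1$ yields $\|q^mC_m\|\le |q|^{c_\epsilon m}$ on $|t|=r'_1+\epsilon$, with $c_\epsilon>0$ independent of $m$. This uniform geometric decay is what makes $\prod_m(I-q^mC_m(t))$ converge on an annulus strictly containing $\X$, and the same argument handles the outer radius. You should replace your uniform bound on $\mathcal{X}_{r'_1,r'_2}$ by this log-convexity step; the rest of your argument (convergence of the product and of its inverse once the factors tend to $I$) then goes through.
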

\begin{proof}

We have that $\prod\limits_{m=1}^{\infty}(I - q^mC_m(t))$ converges on $\X$. We first prove that $q^mC_m(t)$ also satisfied the inequalities $\eqref{growthcond}$ with respect to the annulus with radii $r'_i$. Suppose that $_mA(t)$ is the connection matrix at the $m^{th}$ stage (i.e. with respect to the basis ${\bf f_m}$). If $q^mC_m(t)$ satisfies $\eqref{growthcond}$, then so do $D(q^mC_m(t))$ and $(I-q^mC_m(t))^{\pm 1}$. We have
\begin{equation*}
\displaystyle _{m+1}A(t) = (I-q^mC_m(t))^{-1} {_m}A(t)(I-q^mC_m(t)) + (I-q^mC_m(t))^{-1}D(q^mC_m(t))
\end{equation*}
Therefore, if $_mA(t)$ and $q^mC_m(t)$ satisfy $\eqref{growthcond}$, then $_{m+1}A(t)$ also does.

On the other hand, $_mA(t)$ doesn't have any non-constant terms where the power of $q$ is lower than $m$. Therefore, if $_mA(t)$ satisfies $\eqref{growthcond}$, so does $q^mB_m(t)$ , and so does $q^mC_m(t)$. As $_1A(t)$ clearly does satisfy $\eqref{growthcond}$, we have that all the $q^mC_m(t)$ do, too. 

To finish the proof, $q^nC_n(t)$ is bounded by $|q|^n$ on $|t|=1$, and by 1 on $|t| =r'_1$. The non-archimedian analogue of the Hadamard 3-circle theorem (\cite[Proposition 7.2.3]{Kedlaya}) implies that for every positive $\epsilon$, $q^mC_m(t)$ is bounded by $|q^{c_{\epsilon}}|^m$ on $|t|=r'_1+\epsilon$,  where $c_{\epsilon}$ is positive and independent of $m$. It follows that $\prod\limits_{m=1}^{\infty}(I - q^mC_m(t))$ converges on the subannulus with inner radius $r'_1 + \epsilon$ for any positive epsilon. The same argument works for $r'_2$ as well, and the lemma follows. 
\end{proof} 

\section{Families of holomorphic connections}
In this section, we give a rigid criterion for a holomorphic family of vector bundles with connections on certain families of annuli to have global solutions. 

Let $X$ and $E$ be a holomorphic annulus and disc with coordinates $t$ and $q$, with respect to which $X$ and $E$ are centered at $0$. All holomorphic functions on $X \times E$ can be expressed as series in $x, x^{-1}$ and $q$, which converge absolutely in any compact subset of $X \times E$. 

The field $\C((q))$ equipped with $\nu_q$, the $q$-adic valuation, is a complete non-archimedian field. As in \S 3, let $\X$ be the closed $q$-adic annulus over $\C((q))$ with inner and outer radii 1. 

Let $f \in \mathcal{O}^{\hol}(X\times E)$, $f(x,q) = \sum_{n \in \Z} a_n(q) x^n$ where the $a_n(q) \in \C[|q|]$. If $\nu_q(a_n(q)) \rightarrow \infty$ as $|n| \rightarrow \infty$, then $f$ converges on the rigid anulus $\X$, and we say that $f \in \bOr(\X)$ (notation as in section 3, with $R = \C[|q|]$). The main result we prove in this section is

\begin{Proposition}\label{csol} 
Let $V$ be a holomorphic rank $n$ vector bundle on $X \times E$, along with a connection $\nabla$ relative to $E$. Suppose that the connection matrix $A$ with respect to a basis $\textbf{e}$ has the following properties: 
\begin{enumerate}
\item $A$ has entries in $\bOr(\X)$. 
\item The vector bundle with connection on $\X$ whose connection matrix is $A$ has a full set of rigid solutions over $\bOr(\X)$. 
\end{enumerate}
Then $\nabla$ has a full set of solutions.
\end{Proposition}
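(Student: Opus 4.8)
The plan is to transfer the rigid solutions supplied by hypothesis (2) back to honest holomorphic solutions on a (possibly shrunk) polyannulus $X' \times E'$. The key point is that a solution over $\bOr(\X)$ is, by definition, a matrix of series $\sum_n c_n(q) x^n$ with $c_n(q) \in \C[|q|]$ and $\nu_q(c_n(q)) \to \infty$ as $|n| \to \infty$; I want to read such a matrix as a genuine holomorphic function on some $X' \times E'$. First I would record the setup: by (1) the connection matrix $A = \sum_n A_n(q) x^n$ has $A_n(q) \in \C[|q|]$ with $\nu_q(A_n(q)) \to \infty$, and a fundamental solution matrix $Y$ with $Y, Y^{-1} \in M_n(\bOr(\X))$ satisfies $D(Y) = A Y$ where $D = x \tfrac{d}{dx}$ (equivalently the connection, relative to $E$, kills the columns of $Y^{-1}\textbf{e}$, which is what "full set of solutions" means). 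Since $A$ is holomorphic on $X \times E$ by assumption, it suffices to show the entries of $Y$ (equivalently of $Y^{-1}$) define holomorphic functions on a neighborhood of the form $\{|x| = 1\} \times \{0 < |q| < \varepsilon\}$, and then extend over $q = 0$ and across the annulus.

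The main step is a convergence/estimate argument. For each coefficient series $c_n(q) = \sum_{k} c_{n,k} q^k \in \C[|q|]$, the rigid convergence condition $\nu_q(c_n(q)) \to \infty$ does not by itself control the growth of $c_{n,k}$ in $k$, so I cannot immediately conclude holomorphy in $q$. The way around this is not to take the rigid solution $Y$ as a black box but to reconstruct it by the same inductive $q$-adic procedure used in the proofs of Theorems \ref{thm:use} and \ref{thm:main}: solve $D(Y) = AY$ degree by degree in $q$. Concretely, writing $Y = \sum_{m \ge 0} Y_m(x) q^m$ with $Y_0$ the solution of the reduced equation on $\mathbb{G}_m$ over $\C$ (which is holomorphic on a genuine annulus $X'$ since $A_0(x)$ is), the equation at level $q^{m+1}$ reads $D(Y_{m+1}) - A_0 Y_{m+1} = (\text{polynomial in } A_1, \dots, A_{m+1}, Y_0, \dots, Y_m)$, which one solves by a holomorphic primitive just as $C_m$ was produced from $B_m$ in Theorem \ref{thm:use}. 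The estimates are then exactly the non-archimedean Hadamard three-circle estimates already carried out in the final lemma of \S3: each $Y_m$ (after the normalizing change of basis) satisfies the growth bound $\eqref{growthcond}$ on a fixed subannulus $X'$ and is bounded by $|q|^{c m}$ there for some $c > 0$ independent of $m$, so $\sum_m Y_m(x) q^m$ converges uniformly on compacta of $X' \times E'$ for $E'$ a small disc. Uniqueness of the $q$-adic solution forces this holomorphic $Y$ to agree with the given rigid one, so the rigid solution is holomorphic.

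Having a holomorphic invertible $Y$ on $X' \times E'$ with $D(Y) = AY$, the columns of $Y^{-1}\textbf{e}$ are flat for $\nabla$ relative to $E'$ and form a basis, so $\nabla$ has a full set of solutions on $X' \times E'$; a standard analytic continuation argument along the annulus $X$ (using that the monodromy obstruction is locally constant in $q$ and vanishes on $\{|x|=1\}$) promotes this to a full set of solutions on $X \times E$, possibly after shrinking $E$, which is all that is claimed. The hard part will be the bookkeeping in the convergence estimate: making sure that the three-circle bound on the $Y_m$'s — which in \S3 was applied to the specific factors $I - q^m C_m(t)$ — goes through verbatim for the fundamental matrix built here, and in particular that the reduced solution $Y_0$ and the primitives $C_m$ really are holomorphic on a common subannulus rather than merely rigid-analytic. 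Once that is in hand, everything else is formal.
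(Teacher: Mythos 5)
There is a genuine gap at the heart of your argument, namely the convergence step. The estimates in the final lemma of \S3 (and the three-circle bound from \cite[Proposition 7.2.3]{Kedlaya}) are \emph{non-archimedean}: saying that $q^mC_m(t)$ is ``bounded by $|q^{c}|^m$'' there is a statement about $q$-adic valuations of the coefficients, i.e.\ about divisibility by powers of $q$. Such bounds give no control whatsoever on the archimedean size of the complex numbers $c_{n,k}$ appearing in $c_n(q)=\sum_k c_{n,k}q^k$, so they cannot yield uniform convergence of $\sum_m Y_m(x)q^m$ on compacta of a complex polyannulus $X'\times E'$. To make your route work you would need a separate archimedean majorant/Cauchy-estimate argument of the form $\sup_{X'}\|Y_m\|\le CR^m$, which is not in the paper and is not sketched in your proposal; you would also need to justify that the $D$-primitives in your recursion exist (the vanishing of the relevant constant terms, which in the proof of Theorem \ref{thm:use} came from the $p$-curvature hypothesis via Lemma \ref{A_0}, here has to be re-extracted from the mere existence of the rigid solution), and that your normalized holomorphic solution coincides with the given rigid one rather than differing by a unit in $q$.

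The paper avoids all of this by never asserting that the rigid fundamental matrix is holomorphic. Instead it truncates: modulo $q^m$, an element of $\bOr(\X)$ has only finitely many nonzero $x$-coefficients, so the truncation $g^{\hol}_m$ of the rigid solution is a Laurent \emph{polynomial} in $x$ over $\C[q]/(q^m)$ and is trivially holomorphic and invertible on $X\times E$. Changing basis by $g^{\hol}_m$ makes the connection matrix $\equiv \textbf{0}\bmod q^m$, so by Lemma \ref{cong} the holomorphic monodromy matrix satisfies $\mathcal{M}\equiv \textbf{I}\bmod q^m$; since $m$ is arbitrary and $\mathcal{M}\in\GL_n(\mathcal{O}^{\hol}(E))$, this forces $\mathcal{M}=\textbf{I}$, and the earlier proposition of \S4.1 then produces the full set of holomorphic solutions on all of $X\times E$ (no shrinking of $E$, which your version would require but the statement does not allow). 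I would encourage you to rewrite your argument along these lines: the only analytic input needed is Lemma \ref{holex} together with the congruence Lemma \ref{cong}, and the convergence problem you correctly identified simply never arises.
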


\subsection{Preliminaries}
The data of $(V, \nabla)$ as in Proposition \ref{csol} is the same as a holomorphic family of connections $\nabla_{ X\times \{Q\}}$ on $V_{ X \times \{Q\}}$, parameterised by points $Q \in E$. A section $s$ is a solution if it is in the kernel of $\nabla$. The set of solutions  form an $\mathcal{O}^{\hol}(E)$-module. We first show the existence of local solutions. 

\begin{Lemma}\label{holex}
Let $Y$ be a simply connected open subset of $\C$, and $E_d$ be a holomorphic $d$-dimensional polydisc. Suppose that $(V, \nabla)$ is a vector bundle on $Y \times E_d$ equipped with a connection relative to $E_d$. There exists a full set of solutions for the connection $\nabla$. 
\end{Lemma}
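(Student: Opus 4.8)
The statement to prove is Lemma \ref{holex}: existence of a full set of solutions for a connection relative to a polydisc $E_d$ on $Y \times E_d$, with $Y \subset \C$ simply connected.

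\textbf{Approach.} The plan is to reduce to a standard existence-and-uniqueness theorem for linear ODEs with holomorphic parameter dependence. Since vector bundles on $Y \times E_d$ are trivial (Stein, or just shrink; in any case $Y \times E_d$ is a product of a simply connected domain and a polydisc), fix a trivialization and write the connection in matrix form: $\nabla s = ds - A s$, where $A = A(t, q_1, \dots, q_d)\, dt$ is a holomorphic matrix-valued $1$-form in the $Y$-direction (the connection being relative to $E_d$ means there are no $dq_i$ components). A solution is then a vector $s(t, q)$ of holomorphic functions solving the linear system $\partial s / \partial t = A(t,q) s$.

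\textbf{Key steps.} First I would fix a basepoint $t_0 \in Y$ and, for each fixed $q \in E_d$, invoke the classical existence theorem for linear ODEs in the complex domain: on a simply connected domain $Y$, the system $\partial s/\partial t = A(t,q)s$ with initial condition $s(t_0) = v$ has a unique holomorphic solution on all of $Y$ (this is where simple connectivity of $Y$ is used — monodromy-free continuation of the local power-series solution). Second, I would upgrade to joint holomorphy in $(t,q)$: taking $n$ initial conditions $v = e_1, \dots, e_n$, one gets a fundamental matrix $\Phi(t, q)$ with $\Phi(t_0, q) = I$; the standard Picard iteration
\begin{equation*}
\Phi(t,q) = I + \int_{t_0}^t A(\tau, q)\, \Phi(\tau, q)\, d\tau
\end{equation*}
converges locally uniformly in $(t,q)$ on compact subsets of $Y \times E_d$ (by the usual Gr\"onwall-type bound, uniform in $q$ on compacta), and each iterate is holomorphic in $(t,q)$; hence the limit $\Phi$ is holomorphic in $(t,q)$ by Weierstrass's theorem on locally uniform limits of holomorphic functions. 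Third, I would check that the columns of $\Phi$ are genuinely solutions of $\nabla$, i.e. that $\nabla(\Phi_j) = 0$: this holds because the connection is relative to $E_d$, so $\nabla$ only differentiates in the $t$-direction, and $\partial \Phi / \partial t = A \Phi$ by construction. Finally, $\det \Phi(t_0, q) = 1$ so the columns form a basis of the rank-$n$ solution space over $\mathcal{O}^{\hol}(E_d)$ (or over $\mathcal{O}^{\hol}(Y \times E_d)$), giving the full set of solutions.

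\textbf{Main obstacle.} The only genuinely substantive point is the uniform-on-compacta convergence of the Picard iteration \emph{with the parameter $q$ present}, ensuring holomorphy in $q$ rather than mere measurable/continuous dependence; but this is entirely standard once one bounds $\|A\|$ uniformly on a compact neighborhood in $Y \times E_d$ and iterates, so in the write-up I expect this to be a short citation to the standard theory of linear systems with holomorphic coefficients (e.g. the complex-analytic Cauchy–Lipschitz / Frobenius existence theorem) rather than a detailed argument. Everything else — triviality of $V$, the relative nature of $\nabla$ killing $dq_i$-terms, and invertibility of $\Phi$ — is bookkeeping.
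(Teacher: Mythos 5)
Your proposal is correct, and it proves the same standard fact by a slightly different mechanism than the paper. The paper first reduces to the case where $Y$ is a disc, writes the connection matrix as a power series $A(y,\mathbf{q})=\sum_n A_n(\mathbf{q})y^n$, and constructs the fundamental solution as a \emph{formal} power series $U(y,\mathbf{q})=\sum_i U_i(\mathbf{q})y^i$ with $U\equiv I \bmod y$ via the explicit recursion $U_{i+1}=-\sum_{j\le i}A_jU_{i-j}/(i+1)$; convergence is then deduced from the one-variable theory applied fiberwise at each fixed $\mathbf{q}_0$ (citing \cite[Theorem 6.2.1]{Kedlaya}), which yields convergence of the two-variable series on $Y\times E_d$. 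You instead run the Picard iteration $\Phi=I+\int_{t_0}^t A\Phi\,d\tau$ and prove locally uniform convergence in $(t,q)$ jointly, concluding holomorphy by Weierstrass. The two routes buy slightly different things: your iteration handles joint holomorphy in the base and fiber variables head-on (the uniform estimate on compacta does all the work), whereas the paper's deduction of two-variable convergence from fiberwise convergence is terser and implicitly rests either on Cauchy estimates for the $A_n(\mathbf{q})$ fed into the recursion or on a Hartogs-type argument; on the other hand the paper's recursion gives the coefficients $U_i(\mathbf{q})$ explicitly, which is convenient for the congruence-mod-$q^m$ manipulations used elsewhere in Section 4. Both correctly isolate where simple connectivity of $Y$ enters (monodromy-free continuation, or reduction to a disc), and both are complete modulo the same routine estimates.
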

\begin{proof}
It suffices to prove the lemma in the case where $Y$ is a disc. Suppose that the coordinate on $Y$ is $y$, with respect to which $Y$ is centered at 0. Let ${\bf q} = (q_1, \hdots, q_d)$ be a set of coordinates on $E_d$, with respect to which $E_d$ is centered at ${\bf 0} = (0,\hdots, 0)$.  Let $\textbf{e}$ be a basis of sections of $V$. We have that
$$\nabla(\frac{d}{dy})\textbf{e} = A(y,{\bf q})\textbf{e} = \sum_{i \ge 0} A_n({\bf q})y^n,$$
where the $A_n({\bf q})$ are power-series in the $q_i$ converging on $E_d$. Because $A(y,{\bf q})$ converges on $Y \times E_d$, the series $\sum_{i \ge 0} A_n({\bf q_0})y^n$ converges on $Y$ for every ${\bf q_0} \in E_d$. We claim that there exists a unique matrix $U(y,{\bf q})$ with entries in $\C[|y,q_1, \hdots ,q_d|]$ such that $U(y,{\bf q}) \equiv \textbf{I} \mod{y}$, and such that $U(y,{\bf q})\textbf{e}$ form a formal set of solutions for the connection. Indeed, this follows by writing $U(y,{\bf q}) = \sum_{i=0}^{\infty}U_i({\bf q})y^i$, and solving the equation $\nabla(\frac{d}{dy})U\textbf{e} = \textbf{0}$ recursively. The solution is seen to be 
$$U_{i+1}({\bf q}) = -\sum_{j=0}^{i}A_j(q)U_{i-j}({\bf q})/(i+1).$$
Note that if we specialized to a point ${\bf q} = {\bf q_0}$, $U(y,{\bf q_0})\textbf{e}$ would be a set of holomorphic solutions of $(V,\nabla)$ pulled back to $Y \times \{{\bf q_0}\}$ (this well known fact is also proved in \cite[Theorem 6.2.1]{Kedlaya}). This implies the convergence of $U(y,{\bf q})$ on all of $Y \times E_d$, proving the lemma. 
\end{proof}

For the rest of this section, we fix a point $P\in X$.  For every point $Q \in E$, the connection $\nabla_{X\times \{Q\}}$ gives the monodromy representation 
$$\rho_{(P,Q)}: \pi_1(X,P) \rightarrow \GL(V_{P \times Q})$$ 
Recall that we have fixed a basis $\textbf{e}$ of $V$, and so we can (and will) think of the monodromy representation as 
$$\rho_{(P,Q)}: \pi_1(X,P) \rightarrow \GL_n(\C).$$ 
Because $\pi_1(X,P) = \Z$, the data of $\rho_{(P,Q)}$ is the same as the data of a matrix $\mathcal{M}_{(P,Q)} \in \GL_n(\C)$, where $\mathcal{M}_{(P,Q)}$ is the image under $\rho_{(P,Q)}$ of the generator of $\pi_1$ given by the loop going around $0$ counterclockwise. By Lemma \ref{holex}, this gives a holomorphic map $\mathcal{M}_P$ from $E$ to $\GL_n(\C)$, or an element $\mathcal{M}_P \in \GL_n(\mathcal{O}^{\hol}(E))$. Notice that if we change basis by $U(x,q) \in \GL_n(\mathcal{O}^{\hol}(X \times E))$, $\mathcal{M}_P$ changes by conjugation by $U(P,q) \in \GL_n(\mathcal{O}^{\hol}(E))$. Henceforth, whenever we refer to monodromy matricies $\mathcal{M} \in \GL_n(\mathcal{O}^{\hol}(E))$, we will mean with respect to the basis $\textbf{e}$ and the point $P$ (unless explicitly stated otherwise). 
 
\begin{Proposition}
With the setup as above, the vector bundle with connection has a full set of holomorphic solutions if the monodromy matrix is the identity. 
\end{Proposition}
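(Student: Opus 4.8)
The plan is to reduce the global solvability of the relative connection to the vanishing of the monodromy obstruction, which by hypothesis is trivial. First I would recall from Lemma \ref{holex} that the connection $\nabla$ admits, locally on $X$, a full set of holomorphic solutions over a polydisc in the $q$-variable: concretely, around any point of $X$ there is a matrix $U(x,q)$, holomorphic on a product of a disc in $x$ with $E$ (after possibly shrinking $E$), normalized to $\textbf{I}$ at the center, whose columns applied to $\textbf{e}$ are horizontal. The only obstruction to gluing these local solution bases into a global one on $X\times E$ is the monodromy of analytic continuation around the generator of $\pi_1(X,P)=\Z$, which is exactly the matrix $\mathcal{M}_P\in\GL_n(\mathcal{O}^{\hol}(E))$ introduced above. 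The claim is that if $\mathcal{M}_P=\textbf{I}$ then the local solution bases are single-valued on all of $X$, hence patch to a global horizontal frame.

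The key steps, in order, are: (1) Fix a base point $P\in X$ and a local horizontal frame $\textbf{s}=U\textbf{e}$ near $P$ as produced by Lemma \ref{holex}, where $U$ is holomorphic on a neighborhood of $\{P\}\times E$. (2) Analytically continue $\textbf{s}$ along paths in $X$; since the relative connection is flat (it is a connection on a curve, so flatness is automatic) and the fibers $X$ are annuli with $\pi_1=\Z$, continuation is unobstructed except for the loop around $0$, and going once around multiplies the frame by $\mathcal{M}_P(q)$. One must check that the continued frame remains holomorphic in $q\in E$; this follows because continuation along a fixed path in $X$ is given by composing the local solution matrices $U$ from Lemma \ref{holex} along an overlapping chain covering the path, each of which is holomorphic in $q$, and the chain can be taken uniform in $q$ by compactness of the path and the fact that $E$ can be shrunk. (3) Since $\mathcal{M}_P=\textbf{I}$, the multivalued frame $\textbf{s}$ is in fact single-valued on $X$, i.e. it extends to a frame defined on all of $X\times E'$ for a possibly smaller disc $E'\subset E$; because being horizontal is a closed condition and the extension is canonical, one checks it is in fact defined over all of $E$ (the radius of convergence in $q$ does not shrink upon continuation around a fixed compact loop). (4) Conclude that the columns of this global $U$ give a full set of holomorphic solutions of $\nabla$ on $X\times E$.

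The main obstacle I expect is step (2)–(3): controlling the domain of holomorphy in the $q$-variable under analytic continuation in the $x$-variable, so that the glued frame really is holomorphic on $X\times E$ and not merely on $X$ times some shrunken or $x$-dependent subset of $E$. The clean way around this is to argue fiberwise first — for each fixed $Q\in E$ the hypothesis $\mathcal{M}_P(Q)=\textbf{I}$ gives a genuine holomorphic horizontal frame $U(\cdot,Q)$ on $X$ by the classical theory of linear ODEs with no monodromy — and then to observe that these fiberwise frames are the specializations of the formal/holomorphic family $U(x,q)$ built recursively as in the proof of Lemma \ref{holex}; the convergence of that family on all of $X\times E$ is then forced by its convergence on each slice $X\times\{Q\}$, exactly as in the last two sentences of the proof of Lemma \ref{holex}. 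This sidesteps any delicate uniformity estimate. A minor point to verify along the way is that changing the reference point $P$ or the basis $\textbf{e}$ only conjugates $\mathcal{M}_P$, so the hypothesis ``$\mathcal{M}$ is the identity'' is well-posed and the conclusion is independent of these choices.
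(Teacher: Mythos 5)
Your argument is correct and is essentially the paper's own proof, which simply observes that the statement follows from the existence of local solutions (Lemma \ref{holex}) together with the triviality of the monodromy, exactly the gluing argument you spell out. The extra care you take about holomorphy in $q$ under continuation is a reasonable elaboration but not a departure from the paper's route.
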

\begin{proof}
This follows trivially from the existence of local solutions, as posited by Lemma \ref{holex}. 
\end{proof}


\begin{Lemma}
Let $Y$ be a simply connected open subset of $\C$ and $V, \textbf{e}$ be a vector bundle on $Y \times E$ along with a basis $\textbf{e}$, equipped with a connection relative to $E$. Suppose that $\nabla_1$ and $\nabla_2$ are two connections whose connection matricies $A_i$ (with respect to the basis $\textbf{e}$) are congruent modulo $q^m$. Then, their solutions are also congruent modulo $q^m$. 
\end{Lemma}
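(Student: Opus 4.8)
The plan is to build the two solution matrices by the same recursive procedure used in Lemma \ref{holex}, and to show that the recursion preserves congruence modulo $q^m$. Writing the coordinate on $Y$ as $y$ (we may assume $Y$ is a disc centered at $0$), let $A_i(y,q) = \sum_{n\ge 0} A_{i,n}(q)\,y^n$ be the connection matrix of $\nabla_i$ with respect to $\textbf{e}$, and let $U_i(y,q) = \sum_{j\ge 0} U_{i,j}(q)\,y^j$ be the unique matrix with $U_i \equiv \textbf{I} \bmod y$ such that $U_i\textbf{e}$ is a fundamental system of solutions for $\nabla_i$. As recalled in the proof of Lemma \ref{holex}, the coefficients satisfy
\begin{equation*}
U_{i,j+1}(q) = -\frac{1}{j+1}\sum_{k=0}^{j} A_{i,k}(q)\,U_{i,j-k}(q),\qquad U_{i,0}(q) = \textbf{I}.
\end{equation*}

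First I would observe that the hypothesis $A_1 \equiv A_2 \bmod q^m$ means precisely that $A_{1,n}(q) \equiv A_{2,n}(q) \bmod q^m$ for every $n$ (the coefficients $A_{i,n}(q)$ lie in $\C[|q|]$, and congruence of power series in $y$ is checked coefficient-by-coefficient in $y$). Next I would prove, by induction on $j$, that $U_{1,j}(q) \equiv U_{2,j}(q) \bmod q^m$ for all $j$. The base case $j=0$ is trivial since both equal $\textbf{I}$. For the inductive step, in the displayed recursion each summand $A_{i,k}(q)\,U_{i,j-k}(q)$ has its two factors congruent mod $q^m$ by the hypothesis and the inductive hypothesis respectively; since congruence modulo $q^m$ is preserved under addition, multiplication, and multiplication by the scalar $1/(j+1)$ (note $j+1$ is a unit in $\C$, so no integrality issue arises here, unlike in \S 3), the whole sum is congruent mod $q^m$, hence so is $U_{1,j+1}(q) \equiv U_{2,j+1}(q)$. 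Summing over $j$ gives $U_1(y,q) \equiv U_2(y,q) \bmod q^m$ as formal power series, and since both $U_i$ converge on all of $Y\times E$ (again by Lemma \ref{holex}), the congruence holds as an identity of holomorphic matrices.

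Finally I would note that an arbitrary solution of $\nabla_i$ over a simply connected $Y$ is of the form $U_i(y,q)\,c(q)$ for some column vector $c(q)$ with entries in $\mathcal{O}^{\hol}(E)$ (the solution sheaf is free of rank $n$ over $\mathcal{O}^{\hol}(E)$, with basis the columns of $U_i$), so the congruence $U_1 \equiv U_2 \bmod q^m$ yields the congruence of the corresponding solution spaces, which is the assertion. I expect the only mildly delicate point — and the one worth stating carefully — to be the passage from formal congruence of the $U_i$ to a genuine congruence of holomorphic objects; this is immediate once one invokes the convergence already established in Lemma \ref{holex}, so there is no real obstacle. The argument is essentially a bookkeeping check that the Picard recursion is "functorial modulo $q^m$."
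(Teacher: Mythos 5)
Your proof is correct, but it takes a different route from the paper's. You construct both fundamental solution matrices $U_1,U_2$ via the explicit Picard recursion from Lemma \ref{holex} and check by induction on the $y$-degree that the recursion preserves congruence modulo $q^m$ (the key points being that products and sums of congruent matrices are congruent, and that $1/(j+1)$ causes no trouble over $\C$). The paper instead uses a gauge-transformation argument: it changes basis by the fundamental solution $g_1$ of $\nabla_1$, which reduces to the case $A_1=\textbf{0}$ and $A_2\equiv\textbf{0}\bmod q^m$; then the equation $-\partial_y g_2=A_2g_2$ forces $\partial_y g_2\equiv\textbf{0}\bmod q^m$, and after normalizing $g_2(P,q)=\textbf{I}$ one integrates in $y$ to get $g_2\equiv\textbf{I}\bmod q^m$ in one stroke. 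The paper's version tracks a single congruence rather than an induction over all Taylor coefficients, and sidesteps the (mild but real) point you flag about passing from a coefficientwise formal congruence to a congruence of holomorphic matrices — in your argument one should say explicitly that a holomorphic function on $Y\times E$ all of whose $y$-Taylor coefficients are divisible by $q^m$ is itself $q^m$ times a holomorphic function. Your version, on the other hand, is more self-contained and makes transparent exactly where the congruence is preserved; both arguments reduce to a disc in $Y$ in the same way and both rest on Lemma \ref{holex} for existence and convergence.
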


\begin{proof}
The question is local in $Y$, so we assume that $Y$ is a disc, with coordinate $y$. Fix a point $P \in Y$. Let $g_1(y,q) \in \GL_n(\mathcal{O}(Y \times E))$ satisfy $-D(g_1) = A_1g_1$ (the existence of $g_1$ is guaranteed by Lemma \ref{holex}). Then, $-D(g_1) \equiv A_2g_1 \mod{q^m}$. Therefore, changing basis by $g_1$, we may assume that $A_1 = \textbf{0}$, and $A_2 \equiv \textbf{0} \mod{q^n}$. 

Suppose that $g_2(y,q) \in \GL_n(\mathcal{O}(Y \times E))$ satisfies $-D(g_2) = A_2g_2$. Replacing $g_2$ by $g_2(y,q)g_2^{-1}(P,q)$, we may assume that $g_2(P,q) = \textbf{I}$. As $A_2 \equiv \textbf{0} \mod q^n$, $g_2(y,q) = C(q) + q^ng_3(y,q)$. Evaluating at $y=P$, we see that $C(q) \equiv \textbf{I} \mod q^m$, whence $g_2 \equiv \textbf{I} \mod q^m$, proving the lemma. 
\end{proof}

\begin{Lemma}\label{cong}
Let $V,\textbf{e}$ be a vector bundle along with a basis on $X \times E$. Suppose that $\nabla_1$ and $\nabla_2$ are connections on $V$ relative to $E$. If the connection matricies of $\nabla_i$ are congruent modulo $q^m$, then $\mathcal{M}_1$ is congruent to $\mathcal{M}_2$ modulo $q^m$, where $\mathcal{M}_i$ is the family of monodromy representations corresponding to $\nabla_i$. 
\end{Lemma}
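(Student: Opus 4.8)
The plan is to reduce the global (over $X$) statement to the local statement just proved — namely, that congruence of connection matrices mod $q^m$ forces congruence of solutions mod $q^m$ over simply connected opens — by cutting the annulus $X$ into finitely many simply connected pieces and tracking how the local solutions glue. First I would cover $X$ by two (or finitely many) simply connected open sets $U_1, U_2$ whose union is $X$ and whose intersection has two connected components, chosen so that the chosen base point $P$ lies in one of them, and so that a counterclockwise generator $\ell$ of $\pi_1(X,P)=\Z$ is realized by a path that crosses between the sheets in a controlled way. On each $U_j\times E$ the previous lemma applies: if $g_1^{(j)}, g_2^{(j)} \in \GL_n(\mathcal{O}^{\hol}(U_j\times E))$ are the fundamental solution matrices for $\nabla_1,\nabla_2$ respectively (normalized at a common point of $U_j$), then $g_1^{(j)}\equiv g_2^{(j)}\bmod q^m$ as holomorphic matrix-valued functions on $U_j\times E$.

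The key step is then purely a bookkeeping one: the monodromy matrix $\mathcal{M}_{i}$ is obtained from the local fundamental solutions by analytic continuation around $\ell$, i.e. it is a product of the transition matrices $(g_i^{(j)})^{-1} g_i^{(j')}$ on the overlaps $U_j\cap U_{j'}$, evaluated along the path, together with a final comparison back at $P$. Concretely $\mathcal{M}_i = T_i^{(1,2)}(z_1)\, T_i^{(2,1)}(z_2)$ (with appropriate normalizations), where $T_i^{(j,j')}$ is built out of $g_i^{(j)}$ and $g_i^{(j')}$ on the relevant component of the overlap and $z_1,z_2$ are points on $\ell$ in those components. Since each $g_1^{(j)}\equiv g_2^{(j)} \bmod q^m$, and since $\GL_n(\mathcal{O}^{\hol}(E))$ has the property that $A\equiv B$, $C\equiv D \bmod q^m$ implies $AC\equiv BD$ and $A^{-1}\equiv B^{-1}\bmod q^m$ (as $A$ is invertible and $q^m$ is in the radical of the relevant congruence), the product expressing $\mathcal{M}_1$ is congruent mod $q^m$ to the same product expressing $\mathcal{M}_2$. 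Hence $\mathcal{M}_1\equiv\mathcal{M}_2\bmod q^m$.

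I would carry out the steps in this order: (1) fix the cover $U_1,U_2$ of $X$ and the representative loop $\ell$; (2) invoke the previous lemma on each piece to get $g_1^{(j)}\equiv g_2^{(j)}\bmod q^m$; (3) write the monodromy matrix explicitly as a finite product of transition and evaluation matrices along $\ell$, noting this expression is identical in form for $\nabla_1$ and $\nabla_2$; (4) propagate the mod-$q^m$ congruences through that finite product using multiplicativity and stability under inversion of congruence mod $q^m$ in $\GL_n(\mathcal{O}^{\hol}(E))$. The main obstacle — really the only subtle point — is step (3): making precise and correct the identification of the monodromy matrix with a product of transition matrices for a solution defined only on the pieces of the cover, i.e. carefully choosing normalizations so that the same combinatorial formula computes $\mathcal{M}_i$ for both connections simultaneously. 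Once that formula is fixed, step (4) is immediate from the elementary fact that reduction mod $q^m$ is a ring homomorphism $\mathcal{O}^{\hol}(E)\to \mathcal{O}^{\hol}(E)/q^m$ compatible with taking inverses of matrices that are invertible after reduction.
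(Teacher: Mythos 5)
Your proposal is correct and follows essentially the same route as the paper: the paper also cuts the annulus into two simply connected slit pieces, invokes the preceding lemma to get mod-$q^m$ congruence of the local solution bases, and reads off the monodromy from how the two systems compare on the two components of the overlap (the paper normalizes the solutions to agree on one component so the monodromy is a single comparison matrix at $P$, rather than a product of two transition matrices, but this is only a difference in bookkeeping).
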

\begin{proof}
Let $\C_l = \C \setminus l$, where $l$ is some half-line originating at $0$. Define $X_0 \subset X$ and $X'_0 \subset X$ to be the open subsets obtained by intersecting $X$ with $\C_{l_0}$ and $\C_{l'_0}$ where $l_0$ and $l'_0$ are non-parallel half lines originating at $0$, and not passing through $P$. As $X_0$ and $X'_0$ are simply connected, the restriction of $\nabla_i$ to both these spaces have a full set of solutions. Let $P'$ be the point in the connected component of $X_0 \cap X'_0$ not containing $P$.   

Supppose that $\{s_{i}\}_{\alpha}$ ($\alpha$ ranging from 1 to $n$) form $\mathcal{O}(E)^{\hol}$-bases of solutions for the $\nabla_i$ pulled back to $X_0\times E$, such that the $\{s_1\}_{\alpha}$ are congruent to $\{s_2\}_{\alpha}$ modulo $q^m$. Similarly, let $\{s'_{i}\}$ be $\mathcal{O}^{\hol}$-bases of solutions for $\nabla_i$ pulled back to $X'_0 \times E$, which are congruent modulo $q^m$, and such that $\{s_{i}\} = \{s'_i\}$ on  the connected component of $X_0 \cap X'_0$ not containing $P$. The monodromy operator $\mathcal{M}_i$  corresponding to our chosen generator of $\pi_1(X)$ is precisely the element of $GL_n(\mathcal{O}(E))$ satisfying $\mathcal{M}(\{s'_i\}_{\{P\}\times E}) = \{s_i\}_{\{P\}\times E}$. By Lemma \ref{cong}, $\{s_1\}_{\{P\}\times E}$ is congruent to $\{s_2\}_{\{P\}\times E}$ and  $\{s'_1\}_{\{P\} \times E}$ is congruent to $\{s'_2\}_{\{P\} \times E}$, both modulo $q^m$. Therefore, it follows that $\mathcal{M}_1$ is congruent to $\mathcal{M}_2$ modulo $q^m$ as required. 
\end{proof}

\subsection{Constructing holomorphic solutions}
We now prove Proposition \ref{csol}, using the results proved in the previous subsection. 
\begin{proof}[Proof of Proposition \ref{csol}]
Let $\mathcal{M} \in GL_n(\mathcal{O}(E))$ be the monodromy matrix of $\nabla$. By hypothesis, there exists a matrix $g^{\rig} \in \GL_n(\bOr(\X))$ such that $D(g^{\rig}) + Ag = \textbf{0}$. For any $m \in \N$, there exists a matrix $g^{\hol}_m \in \GL(\mathcal{O}(E\times X))$, such that $g^{\hol}_m \equiv g^{\rig} \mod{q^m}$. Therefore, after changing basis of the vector bundle by $g^{\hol}_m$, the new connection matrix $A_m$ is congruent to $\textbf{0}$ modulo $q^m$. By Lemma \ref{cong}, $\mathcal{M}$ (with respect to the old basis) is conjugate to $\textbf{I}$  modulo $q^m$. Therefore, $\mathcal{M}$ equals $\textbf{I}$ modulo $q^m$. It follows that $\mathcal{M} = \textbf{I}$, and the proposition follows. 
\end{proof}

\subsection{Monodromy operators for more general families of connections}
Let $U$ be a path-connected complex manifold, and let $\A \rightarrow U$ be a family of annuli. Suppose that $(V,\nabla)$ is a rank-$n$ vector bundle on $\A$ with connection relative to $U$. Because we have not assumed that there is a section $U\rightarrow \A$, there needn't exist a global monodromy operator as in the case of \S 4.1. However, the following result does hold true: 
\begin{Lemma}\label{forfutureuse}
For every  $u \in U$, there exists a holomorphic map $\M: N_u \rightarrow \GL_n(\C)$  such that:
\begin{enumerate}
\item $N_u$ is a simply connected open neighbourhood of $u$. 
\item For every $u' \in N_u$, $\M(u')$ is the monodromy matrix of $(V, \nabla)$ restricted to $\A_{u'}$. 
\end{enumerate}
\end{Lemma}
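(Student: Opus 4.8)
The plan is to reduce the general family $\A\to U$ to the trivialized setting of \S4.1 by working locally on the base. First I would observe that, since a family of annuli is locally trivial over $U$ — any point $u\in U$ has a neighbourhood over which $\A$ is holomorphically isomorphic (over the base) to a product $X\times D$ for a holomorphic annulus $X$ and a polydisc $D\ni u$ — we may pick such a trivializing neighbourhood and shrink it to a simply connected open $N_u\subset U$ (a polydisc), so that $\A|_{N_u}\cong X\times N_u$. Here one uses that $U$ is a complex manifold, so simply connected polydisc neighbourhoods form a basis. Under this trivialization the vector bundle $V|_{X\times N_u}$, being a holomorphic vector bundle on $X\times N_u$, can be trivialized further after possibly shrinking $N_u$ (or simply restricted to a basis of sections, as in the setup of \S4.1 where no global triviality of $V$ was needed — only a chosen basis $\mathbf e$).

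Next I would fix a point $P\in X$ and a basis $\mathbf e$ of $V$ over $X\times N_u$, exactly as at the start of \S4.1, and apply Lemma \ref{holex} with $Y=X_0$ a simply connected slit subannulus of $X$ containing $P$ and $E_d=N_u$: this produces a full set of holomorphic solutions of $\nabla$ on $X_0\times N_u$, hence a well-defined monodromy matrix $\M_P(u')\in\GL_n(\C)$ for the generator of $\pi_1(X,P)=\Z$, depending holomorphically on $u'\in N_u$, i.e. an element $\M\in\GL_n(\mathcal O^{\hol}(N_u))$. This is precisely the construction preceding the Proposition in \S4.1; the only change is that the base disc $E$ is replaced by the polydisc $N_u$, and Lemma \ref{holex} is already stated for polydiscs. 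For each $u'\in N_u$, restricting to $X\times\{u'\}$ shows $\M(u')$ is the monodromy matrix of $(V,\nabla)|_{\A_{u'}}$, giving condition (2); condition (1) holds by choice of $N_u$.

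The one genuine point to check — and the step I expect to be the main (mild) obstacle — is the local triviality of the family of annuli $\A\to U$, since unlike families of discs a family of annuli could in principle have nontrivial topological monodromy of the annulus itself around loops in $U$; but this is harmless because we only need a \emph{simply connected} neighbourhood $N_u$, over which any annulus bundle is trivial (the structure group of an oriented annulus bundle is connected, or one simply notes that a fibre bundle over a contractible base is trivial). Once $\A|_{N_u}\cong X\times N_u$ is in hand, everything is a verbatim invocation of Lemma \ref{holex} and the discussion of \S4.1. I would therefore write the proof as: choose $N_u$ simply connected with $\A|_{N_u}\cong X\times N_u$; choose $P\in X$ and a basis $\mathbf e$; apply Lemma \ref{holex} on a slit subannulus times $N_u$ to get solutions depending holomorphically on $u'\in N_u$; read off the monodromy matrix $\M(u')$ and note its holomorphic dependence and that it computes the fibrewise monodromy. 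This completes the verification of both properties.
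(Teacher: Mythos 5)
Your reduction hinges on the claim that $\A\to U$ is \emph{holomorphically} locally trivial, i.e.\ that after shrinking one has $\A|_{N_u}\cong X\times N_u$ over $N_u$. That is a genuine gap: the justifications you offer (connected structure group, triviality of fibre bundles over contractible bases) give only topological or smooth triviality, whereas applying Lemma \ref{holex} on $X_0\times N_u$ requires an actual holomorphic product decomposition. Holomorphic families of annuli are in general not locally trivial, because the conformal modulus of the fibres can vary --- and the families this lemma is meant to serve are of exactly this type: near a degenerating node the family looks like $\{xy=q^a\}$, whose fibre over $q$ is the annulus $|q|^a<|x|<1$, and these fibres are pairwise non-isomorphic Riemann surfaces. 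So the trivializing neighbourhood you start from need not exist, and the rest of the argument cannot get off the ground as written.

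The paper's proof avoids this entirely. It uses the implicit function theorem only to produce product charts $N_a\times X_a$ with $X_a$ a \emph{disc} (such charts always exist for a submersion), applies Lemma \ref{holex} there to get local solutions, and then analytically continues a basis of solutions $\{e_i\}$ once around the fibre annulus; the matrix $\M$ comparing $\{e_i\}$ with its continuation $\{e'_i\}$ is a priori in $\GL_n(\mathcal{O}^{\hol}(N_u\times X_u))$ but, both families being flat sections, actually lies in $\GL_n(\mathcal{O}^{\hol}(N_u))$ --- this is the desired holomorphic monodromy matrix. Your approach could be repaired, e.g.\ by finding a holomorphic sub-product $X\times N_u\hookrightarrow\A|_{N_u}$ inducing an isomorphism on fibrewise $\pi_1$ (monodromy only sees the homotopy class of the loop), but justifying the existence of such a sub-product in general amounts to doing the chart-by-chart continuation anyway, so the honest fix essentially reproduces the paper's argument. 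The second half of your proposal (Lemma \ref{holex} on a slit annulus, holomorphic dependence of $\M$ on the base) is fine once a genuine product structure is available, as in \S4.1 where $E$ really is a disc factor.
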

Beause $N_u$ is simply connected, there exists a compatible choice of generators for $H_1(\A_{u'},\Z)$. We choose one such. For a vector bundle with connection on an annulus, the monodromy representation is only well defined up to conjugation. When we say that $\M_{u'}$ is the monodromy matrix of $(V, \nabla)$ restricted to $\A_{u'}$, we mean that the conjugacy class of $\M_{u'}$ equals the conjugacy class of the monodromy representation evaluated on the choice of generator of $H_1(\A_{u'},\Z)$. 

\begin{Proposition}\label{futureuse}
Setting as above. Suppose that there exists an open subset $U' \subset U$ such that for $u\in U'$, $(V,\nabla)$ pulled back to $\A_{u'}$ has finite monodromy, with the order $m$, which is independent of $u'$. Then for every $u \in U$, $(V,\nabla)$ pulled back to $\A_{u'}$ has finite monodromy. 
\end{Proposition}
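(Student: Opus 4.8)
\textbf{Proof proposal for Proposition \ref{futureuse}.}

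The plan is to promote the local monodromy maps $\M: N_u \to \GL_n(\C)$ of Lemma \ref{forfutureuse} to a global object by passing to an appropriate cover. First I would note that while there is no global monodromy operator on $U$ (since there may be no section $U \to \A$), the \emph{conjugacy class} of the monodromy is globally well-defined: for each $u$ one has a conjugacy class $[\M_u] \subset \GL_n(\C)$, and Lemma \ref{forfutureuse} says this class varies holomorphically in the sense that it is locally represented by a holomorphic matrix-valued function. The key invariant to track is the characteristic polynomial of $\M_u$, whose coefficients are genuinely well-defined holomorphic functions on all of $U$ (being conjugation-invariant), and which patch together from the local data. On $U'$, the hypothesis that the monodromy has order exactly $m$ means each eigenvalue of $\M_{u'}$ is an $m$-th root of unity; equivalently, the characteristic polynomial of $\M_{u'}$ divides $(z^m - 1)^n$, and in fact $\M_{u'}^m = \mathbf{I}$ so $\M_{u'}$ is semisimple with eigenvalues among the $m$-th roots of unity.

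Next I would argue by analytic continuation. The coefficients of the characteristic polynomial of $\M_u$ are holomorphic functions on the connected manifold $U$; on the open subset $U'$ they are constant (each equals the corresponding coefficient of a fixed product of cyclotomic factors, since the eigenvalues are $m$-th roots of unity and — after shrinking $U'$ to a connected component if necessary — the multiplicities are locally constant hence constant). By the identity theorem for holomorphic functions on a connected complex manifold, these coefficients are therefore constant on \emph{all} of $U$. Hence for every $u \in U$, every eigenvalue of $\M_u$ is an $m$-th root of unity. It then remains to upgrade this to $\M_u^m = \mathbf{I}$, i.e. to rule out non-trivial Jordan blocks. For this I would again use analyticity: the function $u \mapsto \M_u^m$ is (locally, via the $\M$ of Lemma \ref{forfutureuse}) holomorphic and matrix-valued, and it equals $\mathbf{I}$ on $U'$; but $\M_u^m - \mathbf{I}$, while only locally defined as a matrix, has entries whose vanishing locus is checked on overlaps to be consistent (conjugation by the transition matrices preserves the property ``$m$-th power is the identity''). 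So the set $\{u : \M_u^m = \mathbf{I}\}$ is both open and closed in $U$ and contains $U'$, hence is all of $U$ by connectedness.

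The main obstacle I anticipate is making the last step rigorous despite the absence of a global monodromy operator: $\M_u^m - \mathbf{I}$ is not a globally defined function, only a collection of local ones related by conjugation. I would handle this by working with a conjugation-invariant replacement — for instance, the rank of $\M_u^m - \mathbf{I}$ (an integer-valued, upper-semicontinuous, conjugation-invariant quantity) or, more cleanly, the holomorphic function $u \mapsto \operatorname{tr}\big((\M_u^m - \mathbf{I})(\M_u^m - \mathbf{I})^{*}\big)$ replaced by something holomorphic such as $\operatorname{tr}\big((\M_u^m - \mathbf{I})^k\big)$ for $k = 1, \dots, n$, all of which are well-defined holomorphic functions on $U$ by invariance under conjugation. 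Since $\M_u^m$ has all eigenvalues equal to $1$ (by the characteristic-polynomial argument), $\M_u^m = \mathbf{I}$ if and only if $\M_u^m - \mathbf{I}$ is nilpotent \emph{and} these trace functions vanish; the traces vanish on $U'$, hence on $U$ by the identity theorem, forcing $\M_u^m - \mathbf{I} = \mathbf{0}$ everywhere. One should also take a moment to reduce to the case that $U'$ is connected (replace it by a connected component, which is still open and nonempty) so that "multiplicity of each eigenvalue" is honestly constant on $U'$ before invoking analytic continuation; this is a routine point but worth stating explicitly.
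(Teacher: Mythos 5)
Your overall strategy --- the locally defined holomorphic monodromy matrices of Lemma \ref{forfutureuse}, conjugation-invariance of the condition ``$\M^m=\mathbf{I}$'', and propagation by connectedness --- is exactly the paper's; the paper phrases it as covering a path from $U'$ to $u$ by finitely many charts $N_i$ and applying the identity theorem on each chart in turn. The characteristic-polynomial step is a harmless detour the paper does not need. However, your proposed rigorization of the final step fails in two places. First, once you know every eigenvalue of $\M_u$ is an $m$-th root of unity, the matrix $\M_u^m-\mathbf{I}$ is nilpotent for \emph{every} $u\in U$, so $\operatorname{tr}\bigl((\M_u^m-\mathbf{I})^k\bigr)=0$ identically on all of $U$ for every $k$; these trace functions therefore carry no information and cannot detect a nontrivial Jordan block. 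Second, the rank of $\M_u^m-\mathbf{I}$ is lower semicontinuous, so $\{u:\M_u^m=\mathbf{I}\}$ is closed, but there is no general reason for it to be open --- the zero locus of a holomorphic function is not open --- so the assertion that this set is ``both open and closed'' is unjustified as written.

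The repair is the paper's argument and requires no conjugation-invariant scalar at all: on each chart $N_i$ furnished by Lemma \ref{forfutureuse}, $\M_i^m-\mathbf{I}$ is an honest holomorphic matrix-valued function on a connected open set; if it vanishes on a nonempty open subset of $N_i$ (because $N_i$ meets $U'$, or meets some $N_j$ on which vanishing is already established --- the condition transfers across the overlap precisely because it is conjugation-invariant), then it vanishes on all of $N_i$ by the identity theorem. Chaining finitely many such charts along a path from $U'$ to an arbitrary $u$ concludes. What connectedness buys is not that the locus $\{u:\M_u^m=\mathbf{I}\}$ is clopen, but that its \emph{interior} is clopen; your write-up conflates the two, and the trace-function device does not bridge the gap.
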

\begin{proof}
This is a direct application of Lemma \ref{forfutureuse}. Let $u\in U$ be any point, and let $\beta$ be a path connecting $u$ and $u'$ ($u' \in U'$ some point). There exists a finite open cover $\{N_i \}$ of $\beta$ where each $N_i$ satisfies the conclusion of Lemma \ref{forfutureuse}. Assume that if we omit any $N_i$, we no longer have a cover. Let $\M_i$ be the corresponding monodromy operators. 

By holomorphicity, if any $N_i$ intersects $U'$, then $\M_i^m$ is the constant function ${\bf I}$. Further, if $N_i$ and $N_j$ intersect, and if $\M_i^m$ is ${\bf I}$, then so is $\M_j$. Some $N_i$ must intersect $U'$ (as $u' \in \beta$). Therefore, the connectedness of $\beta$ and the assumptions on $N_i$ give us that each $\M_i^m$ must equal ${\bf I}$, and so $\A_u$ has finite monodromy. As $u$ was arbitrary, the result follows. 
\end{proof}

We conclude this section by proving Lemma \ref{forfutureuse}. 

\begin{proof}[Proof of Lemma \ref{forfutureuse}]
Fix $u \in U$, and let $a \in \A_u$ be any point. By the implicit function theorem, there exists a neighbourhood of $a \in \A$ which is biholomorphic to $N_a \times X_a$, where $N_a \subset U$ is open and equals a polydisc, and $X_a$ is an open disc inside $\C$. By Lemma \ref{holex}, $(V,\nabla)$ has a full set of solutions on $N_a \times X_a$. Therefore, there exist local solutions to $\nabla$. 

Now, fix a point $a \in \A_u$, and a neighbourhood $N_a \times X_a$ as above, and let $\{e_i \}$ be a basis of solutions. As there exist local solutions, we may analytically continue the $\{e_i\}$ about the annulus, until we come back to the point $a$. Suppose that the continuation of the $\{e_i \}$, in some neighbourhood $N'_a \times X'_a$ of $a$, are $\{e'_i\}$. Let $X_u = X_a \cap X_a'$, and $N_u \subset N_a \cap N'_a$ be any simply connected neighbourhood of $u$. Because the sections $\{e_i \}$ and $\{ e'_i \}$ are holomorphic, there exists $\M \in \GL_n(\mathcal{O}^{\hol}(N_u \times X_u))$ such that $\M \{ e_i\} = \{ e'_i\}$. Because the sections are solutions of $\nabla$, $\M$ actually lives in $\GL_n(\mathcal{O}^{\hol}(N_u))$. This gives the required $\M: N_u \rightarrow \GL_n(\C)$.

\end{proof}
\section{Proof of the main technical result}

\subsection{Setup}
Notation is as in Theorem \ref{thm:tech}. Let $d$ be the dimension of $B$. Let $Y = \overline{B} \setminus B$ be the nodal locus. We may assume that $Y$ is a generically smooth divisor (and so, has dimension $d-1$). Indeed, blowing up at the point $s_0$ allows us to assume that the nodal locus is divisorial, and by normalizing, we may assume that it is generically smooth. We also assume that $\gamma_s$ contracts to the node $P \in \mathcal{C}_{s_0}$, where $s_0$ can be assumed to be a smooth point of $Y$. 

Let $F$ be the function field of $B$, and $K$ be the function field of $Y$. Suppose that the rational function $q \in F$ is a local uniformising parameter for $Y$. Let $S \subset F$ be the corresponding discrete valuation ring - the maximal ideal is generated by $q$, and residue field is $K$. The special fiber $\mathcal{C} \times \Spec K$ is a nodal curve with node $P$, and the generic fiber $\mathcal{C} \times \Spec F$ is a smooth curve. 

Let $\mathcal{C}^{/P}$ be the complete local ring  of $\mathcal{C}$ at $P$, and let $\mathcal{C}^{h,P}$ be the henselization of the local ring of $\mathcal{C}$ at $P$. There exists a positive integer $a$, and functions $x,y \in \mathcal{C}^{h,P}$ such that $\mathcal{C}^{/P} \simeq K[|x,y,q|]/(xy - q^a)$. 

\begin{definition}
A $\C$-point of $Y$ is said to be generic enough if it is induced by an embedding $K \hookrightarrow C$. 
\end{definition}

Pick $x_1, \hdots x_{d-1}$, elements in $K$ that form a transcendence basis. Then the $x_i$ and $q$ give holomorphic coordinates in a holomorphic neighbourhood in $B$ of any generic enough point $s \in Y(\C)$. Let $s_q$ be the $1$-dimensional manifold obtained by fixing the values of the $x_i$ and allowing $q$ to vary (such that $|q|$ is small enough). Pulling back $\mathcal{C}$ to $s_q$ gives a holomorphic one-parameter family of complex curves. Call this family $\mathcal{C}_{s_q}$. We first establish the result for this family. 

\begin{Lemma}
Notation as above. Let $\gamma \subset \mathcal{C}_{s_{q'}}$ (for some non-zero value of $q = q'$) be a simple closed loop which contracts to the node at $q = 0$. Then $\gamma$ has finite order (independent of the point $s$) in the monodromy representation. 
\end{Lemma}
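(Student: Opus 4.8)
The plan is to reduce the holomorphic one-parameter family $\mathcal{C}_{s_q}$ to the rigid-analytic picture developed in \S3--\S4, and then feed the result into Proposition \ref{futureuse}. Concretely, near the node $P$ the completed local ring is $K[|x,y,q|]/(xy-q^a)$, so for each fixed small $q=q'\neq 0$ the local picture of $\mathcal{C}_{s_{q'}}$ about $P$ is a holomorphic annulus $\{|q'|^a \le |x| \le 1\}$ (say), and as $q'\to 0$ these annuli converge to a punctured disc pinching at the node; the vanishing loop $\gamma$ is isotopic to a core circle $|x| = c$ of such an annulus. The first step is therefore to isolate the restriction of $(V,\nabla)$ to a neighbourhood $\A$ of these vanishing annuli inside $\mathcal{C}_{s_q}$, realised as a holomorphic family of annuli $\A\to E$ over a small $q$-disc $E$ (with the annulus degenerating as $q\to 0$), together with the connection relative to $E$. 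After choosing a cyclic vector, the connection matrix with respect to the induced coordinate $t$ on the annulus has entries that are holomorphic in $(t,q)$ on $X\times E$.

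The second step is to pass to the rigid annulus $\X$ over $\C((q))$: the connection matrix, viewed $q$-adically, lies in $\Or(\X)$ a priori, and since almost all $p$-curvatures vanish (this hypothesis is inherited: $\mathcal{C}_{|B}$ carries $(V,\nabla)$ with vanishing $p$-curvatures, and restriction to the formal/rigid annulus along $q$ preserves this after spreading out over a finitely generated $\Z$-algebra $S$ with fraction field containing $K$), Corollary \ref{grow} shows the matrix actually satisfies \eqref{growthcond}, i.e. lies in $\bOr(\X)$. Then Theorem \ref{thm:use} (part 2) applies: after a finite pullback $t\mapsto t^N$ — which corresponds to passing to a finite cover of the annuli, hence does not affect the \emph{order} of the monodromy of the core loop up to a factor of $N$ — there is a basis of rigid solutions over $\bOr(\X)$. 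This is exactly hypothesis (2) of Proposition \ref{csol}, and hypothesis (1) is the $\bOr(\X)$-statement just obtained. Hence Proposition \ref{csol} yields a full set of holomorphic solutions of $\nabla$ on $X\times E$ (again after the finite pullback on $t$), which in particular forces the monodromy of $\nabla$ around the core circle of the annulus $\A_{q'}$, for all small $q'$, to be trivial — equivalently, the monodromy of $\gamma$ is of order dividing $N$.

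The third step is the globalisation: a priori the above only controls monodromy for $q'$ in a small disc near $0$. Here I would invoke Proposition \ref{futureuse} with $U$ the punctured $q$-disc (or the parameter space of the family of annuli $\A_{s_q}$), $U'$ the punctured sub-disc of small radius on which the monodromy of the core loop has order dividing the fixed integer $N$, and conclude that for every admissible $q'$ the monodromy of the vanishing loop has order dividing $N$ — in particular finite, and with order bounded independently of the point. Finally one checks that the core circle of the family of annuli is genuinely isotopic, inside $\mathcal{C}_{s_{q'}}$, to the given simple closed loop $\gamma$ (this is where "$\gamma$ contracts to the node" is used: by definition there is a contraction $J$, and the annulus neighbourhood of $P$ is precisely where $J(S^1\times a)$ lives for $a$ close to $1$; isotopy invariance of "contracts to the node" and of monodromy order finishes it). The independence of the bound on $s$ follows because $N$ and the whole rigid computation depend only on $K$ and the algebraic data over $S$, not on the chosen generic-enough complex embedding.

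The main obstacle I expect is the interface between the three formalisms — holomorphic family of degenerating annuli, rigid-analytic annulus over $\C((q))$ (or over $K((q))$ after spreading out), and the $p$-curvature/integrality bookkeeping over $S$. Specifically: (i) verifying that the entries of the connection matrix, which are honestly holomorphic functions on $X\times E$, genuinely define an element of $\Or(\X)$ with the integrality needed to apply Corollary \ref{grow} and Theorem \ref{thm:use}, i.e. that one may choose the finitely generated $\Z$-algebra $S$ and the cyclic vector compatibly; and (ii) tracking the finite pullback $t\mapsto t^N$ carefully so that the ``full set of rigid solutions'' really translates into a bound on the order of the monodromy of $\gamma$ itself rather than of some incommensurable loop. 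The topological bookkeeping (identifying the core circle of the pinching annulus with $\gamma$ up to isotopy, and applying Proposition \ref{futureuse} along a path in the $q$-disc) is comparatively routine once the analytic input is in place.
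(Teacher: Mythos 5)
Your architecture is the same as the paper's: choose a cyclic basis near the node, pass to the rigid annulus, invoke Theorem \ref{thm:use} to get bounded-by-one connection matrix and rigid solutions after finite pullback, feed this into Proposition \ref{csol}, and propagate over the punctured $q$-disc. The handling of the finite pullback $t\mapsto t^N$ (order of $\gamma$ divides $N$) and the isotopy of $\gamma$ with the core circle are fine, and your use of Proposition \ref{futureuse} in step 3 is harmless --- the paper defers that propagation to Lemma \ref{lem:nbd} and the completion of Theorem \ref{thm:tech}, since Proposition \ref{csol} already yields solutions on all of $X\times E$.

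However, the obstacle you flag in (i) is a genuine gap, and it is the crux of the proof; the paper closes it with a specific device you do not supply. First, the rigid computation must be carried out over $K((q))$ with $K$ the function field of the boundary divisor $Y$ (this is why only ``generic enough'' points $s\in Y(\C)$, i.e.\ those induced by embeddings $K\hookrightarrow\C$, are treated in this lemma), not over $\C((q))$: otherwise the $p$-curvature hypothesis cannot even be formulated, since $\C((q))$ does not spread out over a finitely generated $\Z$-algebra. Second, the cyclic vector is not chosen over the holomorphic functions or over $\Or(\X)$, but over the henselization: one identifies $\bOr(\X^o)$ with $\mathcal{C}^{/P}\simeq K[|x,y,q|]/(xy-q^a)$, notes that $(V,\nabla)$ descends to $\mathcal{C}^{h,P}[1/q]$, and applies the cyclic vector theorem there, obtaining a basis $\mathbf{e}$ and a function $f$ in $\mathcal{C}^{h,P}[1/fq]$, with $t=x/q^{\alpha}$ also in $\mathcal{C}^{h,P}[1/q]$. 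Because elements of the henselization are algebraic over the local ring of $\mathcal{C}$ at $P$, the resulting connection matrix simultaneously (a) converges on the rigid annulus $\X$, (b) converges holomorphically on a neighbourhood $X\times E^{\times}$ of the node with at worst a finite-order pole at $q=0$, and (c) reduces modulo $\mathfrak{p}$ for almost all primes. All three properties are needed --- (c) to apply Theorem \ref{thm:use} at all, (a) and (b) together to transfer its conclusion into hypothesis (2) of Proposition \ref{csol} --- and none of them follows from an arbitrary choice of cyclic vector. Note also that the paper then uses Theorem \ref{thm:use} part 1 not merely to place the matrix in $\bOr(\X)$ but to conclude that the connection has no pole at $q=0$, so that $(V,\nabla)$ extends across $q=0$ to $X\times E$, which is what Proposition \ref{csol} requires.
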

\begin{proof}
We set things up so as to apply Theorem \ref{thm:use} and Proposition \ref{csol}. Let $x,y$ be as in the beginning of this section. Consider the $q$-adic formal scheme over $K[|q|]$ (the completion of $S$ at $q$) obtained by completing $\mathcal{C}$ along its special fiber. Suppose that $\mathscr{C}$ is the rigid space over $K((q))$ associated to the generic fiber of the formal scheme. 

The formal tube of $\mathscr{C}$ over $P$ is an open rigid anulus $\mathcal{X}^o$. The inner and outer radii of $\X^o$ are $|q|^a$ and $1$. We have that $\bOr(\mathcal{X}^o)= \mathcal{C}^{/P}$. The pullback of $(V,\nabla)$ to $\X^o$ descends to the ring $\mathcal{C}^{h,P}[1/q]$. By the cyclic basis theorem (for instance, see \cite[Theorem 4.4.2]{Kedlaya}), there exists a function $f \in \mathcal{C}^{h,P}[1/q]$, such that $(V,\nabla)$ pulled back to $\mathcal{C}^{h,P}[1/fq]$ is the trivial vector bundle with a basis $\textbf{e}$ cyclic for $\nabla, D= x\frac{d}{dx}$. 

Let $\alpha < 1$ be a positive rational number such that $f$ has no zeros in the rigid subannulus $\mathcal{X}'$ given by $|x| = q^{\alpha}$ of $\mathcal{X}^o$. By pulling back by a map of the form $q \mapsto q^m$, we assume that $\alpha$ is a positive integer. We also have that $\alpha < a$. (Recall that the $\X^o$ has inner radius $|q|^a$)

Let $t = x /q^{\alpha}$. Note that $t \in \mathcal{C}^{h,P}[1/q]$. Then $1/f$ has a series expansion in terms of $t, t^{-1}$ which converges on the rigid annulus $\X \subset \X^o$, given by $|t| = 1$. Notice that all the above data can be reduced modulo $p$ for almost all primes, because the functions $x,t,f$ are elements of $\mathcal{C}^{h,P}[1/q]$ and the basis $\textbf{e}$ is defined over $\mathcal{C}^{h,P}[1/q]$. Further, almost all $p$-curvatures vanish. 

A holomorphic neighbourhood of the node is described locally by the conditions $xy = q^a$, $|x|,|y|,|q| < \epsilon$. Again, $x,y$ are as above, and $\epsilon > 0$ is some real number that is small enough. In terms of $t$, the neighbourhood is now given by $0 < |q| < \epsilon,$ $|q|^{a-\alpha}/\epsilon < |t| < \epsilon/|q|^{\alpha}$. Suppose that $E$ is the open disc centered at $q=0$ with radius $\epsilon$, and let $E^{\times}$ equal $E \setminus \{0\}$. Choose $X'$ to be a holomorphic annulus centered at $t=0$, such that $X' \times E^{\times}$ is a subset of the above neighbourhood, and the series $f$ converges on $X' \times E^{\times}$. Fixing an appropriate value of $q$ with $|q| = \epsilon' \le \epsilon$, the series expansion of $1/f$ converges on a complex analytic subannulus $X$ of $X'$. This series has only finite denominator in $q$, because it converges on $\X$. Therefore, the series of $1/f$ has to converge for any non-zero value of $q$ with $|q| < \epsilon'$. Replace $\epsilon$ with $\epsilon'$. 

We have set things up so that the (cyclic) basis $\textbf{e}$ of $V$, and the entries of the connection matrix with respect to $\textbf{e}$ converge both on the rigid annulus $\X$ and the family of holomorphic annuli $X \times E^{\times}$, with at worst a pole of finite order at $q=0$. We now apply Theorem \ref{thm:use}, part 2, to deduce the existence of rigid solutions (after finite pullback). By \ref{thm:use}, part 1, the entries of the connection matrix doesn't have poles in $q$, and so $(V,\nabla)$ extends to $X \times E$. We may apply Proposition \ref{csol} to deduce that $(V ,\nabla)$ has finite monodromy. The result follows.
\end{proof}

\begin{Lemma}\label{lem:nbd}
Notation and setting as above. Then, in a complex analytic neighbourhood $U$ of $s_0$, the vanishing loop in each smooth fiber acts with finite order (independent of the fiber) in the monodromy representation. 
\end{Lemma}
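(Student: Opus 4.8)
The plan is to upgrade the previous lemma, which established finiteness of monodromy for the one-parameter family $\mathcal{C}_{s_q}$ obtained by fixing a transcendence basis $x_1, \dots, x_{d-1}$ of $K$ and varying $q$, to a genuinely $d$-dimensional complex-analytic neighbourhood of $s_0$ in $B$. The essential point is that the previous argument, via Theorem \ref{thm:use} and Proposition \ref{csol}, was carried out over the rigid annulus $\mathcal{X}$ over $K((q))$, and all the data (the cyclic basis $\mathbf{e}$, the entries of the connection matrix, the function $f$) live in $\mathcal{C}^{h,P}[1/q]$, hence are defined over a ring whose points include an entire analytic neighbourhood of $s_0$, not just the curve $s_q$. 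So morally the conclusion should hold uniformly in a full neighbourhood; what needs checking is that the analytic convergence estimates can be made uniform as the $x_i$ vary in a small polydisc.

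Concretely, first I would choose holomorphic coordinates $(x_1, \dots, x_{d-1}, q)$ on a polydisc neighbourhood of $s_0$ in $B^{\mathrm{an}}$, as provided by the setup (this uses that $s_0$ is generic enough and a smooth point of $Y$). As in the previous lemma, near the node the total space $\mathcal{C}$ is cut out by $xy = q^a$, and after replacing $q$ by $q^m$ and setting $t = x/q^\alpha$ we obtain a family of holomorphic annuli; but now the base of this family is not $E^\times$ alone but $\Delta \times E^\times$, where $\Delta$ is a polydisc in the $x_i$-coordinates. I would invoke the holomorphic machinery of Section 4 in this form: Lemma \ref{holex} already handles solutions over a product $Y \times E_d$ with $E_d$ a polydisc, so local solutions exist over $X \times \Delta \times E$; the monodromy operator $\mathcal{M}$ then becomes an element of $\GL_n(\mathcal{O}^{\hol}(\Delta \times E))$ rather than $\GL_n(\mathcal{O}^{\hol}(E))$, and Lemma \ref{cong} generalizes verbatim (congruence of connection matrices mod $q^m$ implies congruence of monodromy matrices mod $q^m$) with $\Delta$ carried along as an inert parameter. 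Running the argument of Proposition \ref{csol}: the rigid solution over $\bOr(\mathcal{X})$ exists after finite pullback by Theorem \ref{thm:use} (applied over $K((q))$, i.e. at the generic point of $Y$, so it applies to the whole family), and approximating it mod $q^m$ by holomorphic matrices forces $\mathcal{M} \equiv \mathbf{I} \bmod q^m$ for all $m$, hence $\mathcal{M} = \mathbf{I}$ identically on $\Delta \times E$. Thus after finite pullback the connection has a full set of solutions on $X \times \Delta \times E$, so the vanishing loop acts trivially there, and on the original (un-pulled-back) family it acts with finite order bounded by the degree of the pullback, independent of the fiber.

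The main obstacle I anticipate is the uniformity of the convergence radii as the parameters $x_i$ vary. In the previous lemma, $\epsilon$ and $\epsilon'$ and the annulus $X$ were chosen depending on the single point $s$; here I must choose a single polydisc $\Delta$ and radii $\epsilon, \alpha$ that work simultaneously for all values of $(x_1, \dots, x_{d-1})$ in $\Delta$. This is a question of showing that the zero locus of $f$ stays away from $|x| = |q|^\alpha$ uniformly, and that the series expansions of $1/f$ and of the connection-matrix entries converge on a fixed annulus $X$ uniformly over $\Delta \times E$ — both of which follow from the fact that $f$ and the connection matrix are honest elements of $\mathcal{C}^{h,P}[1/q]$, hence holomorphic on a genuine (open) neighbourhood of $P$ in $\mathcal{C}^{\mathrm{an}}$, so by shrinking $\Delta$ and $\epsilon$ we obtain the needed uniform estimates by a compactness argument on the closure of a slightly smaller polydisc. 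Once this uniformity is in hand, the rest is a mechanical repetition of the Section 4 arguments with the passive parameters $x_i$ along for the ride, so I would not expect further difficulties.

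A cleaner alternative, which I would mention as a remark, is to deduce Lemma \ref{lem:nbd} formally from the previous lemma together with Proposition \ref{futureuse}: the family $\mathcal{C} \to B$ restricted to a neighbourhood of $s_0$ gives a family of annuli $\mathcal{A} \to U$ near the node (after the coordinate changes above), the previous lemma shows finite monodromy of order $m$ for the sub-family over the curve $s_q$ (which we may fatten slightly to an open $U' \subset U$ since the order is independent of the base point and, by holomorphicity of $\mathcal{M}$, $\mathcal{M}^m$ vanishing on $s_q$ forces it to vanish on a neighbourhood), and then Proposition \ref{futureuse} propagates finite monodromy to all of $U$. This shortcut avoids re-doing the convergence estimates entirely and is probably the route I would actually write up.
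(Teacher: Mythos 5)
Both routes you propose contain the same essential gap, and it is precisely the point the paper's one-line proof is designed to handle. In your ``cleaner alternative'' (the route you say you would actually write up), the fattening step fails: $s_q$ is a one-dimensional complex curve sitting inside the $d$-dimensional base, and a holomorphic function (here $\mathcal{M}^m - \mathbf{I}$) can perfectly well vanish on a positive-codimension analytic subset without vanishing on any open neighbourhood of it — think of $w$ vanishing on $\{w=0\}$ in $\C^2$. So you cannot produce the open set $U'$ needed to invoke Proposition \ref{futureuse} from a single curve $s_q$. The paper's fix is to use \emph{all} generic enough points at once: since generic enough points are dense in $Y(\C)$ (in the analytic topology), the union of the curves $s_q$ over generic enough $s$ is dense in a full neighbourhood $U$ of $s_0$; the previous lemma gives a single order $m$ working for every such curve; and then the local holomorphic monodromy matrices supplied by Lemma \ref{forfutureuse} satisfy $\mathcal{M}^m = \mathbf{I}$ on a dense subset, hence identically, by the identity principle. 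No fattening and no Proposition \ref{futureuse} are needed.

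Your main route (re-running the rigid and holomorphic arguments over a polydisc $\Delta\times E$) has a related problem that your compactness argument does not address: the cyclic basis $\mathbf{e}$, the function $f$, and the coordinates $x,y$ live in $\mathcal{C}^{h,P}[1/fq]$, the henselization at the node over the \emph{generic} point of $Y$. Such elements are defined only over a Zariski-open dense subset of $Y$, and nothing guarantees that this subset contains the given point $s_0$ or a full analytic neighbourhood of it — $s_0$ is an arbitrary smooth point of $Y$, not assumed generic enough. So ``shrinking $\Delta$ around $s_0$'' does not yield the uniform estimates; you would still have to excise a proper analytic subset, which is exactly the situation the density-plus-holomorphicity argument is built to circumvent. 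If you replace the fattening step by the density observation, your write-up collapses to the paper's proof.
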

\begin{proof}
The set of generic enough points of $Y$ are dense (in the analytic topology). Therefore, for an appropriate open neighbourhood $U$ of $s_0$, the set of points $s_q$ (where $s\in Y(\C)$ ranges over generic enough points in $Y$) are dense in $U$. By the above lemma, the vanishing loop acts with finite order (independent of the point) for a dense set of points in $U$. The proposition follows from the holomorphicity of $\nabla$, and Lemma \ref{forfutureuse}. 
\end{proof}

\subsection{Completion of the proof}
\begin{proof}[Proof of Theorem \ref{thm:tech}]
Let $\beta$ be a  path connecting $s$ to $s_0$ such that $\gamma_s$ can be pinched off along $\beta$. If $d$ (the dimension of $B$) is greater than $1$, we may assume that $\beta$ does not intersect itself. Otherwise, replace $\overline{B}$ with $\overline{B} \times \mathbb{P}^1$ and pull everything back by the projection to $B$ so that $d > 1$. 

Suppose that $U' \subset \mathcal{B}^{\an}$ is a contractible open set containing the intersection of $\beta$ with the interior of $B$. Then $\gamma_s$ defines a holomorphic family of annuli $X_{U'}$ over $U'$. Therefore, showing that $\gamma_s$ acts with finite order in the monodromy repsentation is the same as proving that $(V, \nabla)$ pulled back to $X_{U'}$ has finite monodromy. This is true by Lemma \ref{lem:nbd} and Proposition \ref{futureuse}. The theorem follows.
\end{proof}

\section{Applications}
\subsection{The universal family over $\Mod$}
 Let $\Mod$ be the moduli space of $d$ ordered points on $\p^1$, with $d>3$, and let $\mathcal{C} \rightarrow \Mod$ be the universal family. To account for the $\textrm{PSL}_2$ action on $\p^1$, we may assume that the first three marked points are $\{0,1,\infty\}$ in that order. $\Mod$ can be then identified with ${\p^1} ^{d-3} \setminus \Delta$, the locus $(Q_4,Q_5, \hdots Q_{d}) \in {\p^1} ^{d-3}$ where the $Q_i$ are all distinct and different from $\{0,1,\infty\}$. Let the ordered marked points on $\mathcal{C}$ be $Q_1, Q_2, \hdots, Q_d$ where $Q_1, Q_2, Q_3$ equal $0,1,\infty$ respectively. Note that $\mathcal{C}\rightarrow \Mod$ is defined over $\Z[1/m]$ for some appropriate $m$. Let $\Co$ denote the open subscheme $\mathcal{C}\setminus \{Q_1, \hdots Q_d\}$. Every simple closed loop $\gamma$ induces a partition of the set of $\{Q_1, \hdots, Q_d\} = S_1 \cup S_2$ into 2 subsets. Conversely, every partition into two sets is induced by some simple closed loop $\gamma$.  
\begin{definition}
Call a partition of $\{Q_1, \hdots Q_d\} = S_1 \cup S_2$ nice if $Q_1, Q_2, Q_3 \in S_1$ and $S_2 \neq \phi$. 
\end{definition}
We now prove Theorem \ref{P1}
\begin{proof}[Proof of Theorem \ref{P1}]
As in the proof of Theorem \ref{gen}, it suffices to prove the result in the case of {\it the} generic family. Suppose that $\Modbar$ is the compactification of $\Mod$ constructed in \cite{Knudsen}. The family $\mathcal{C} \rightarrow \Mod$ extends to a family $\Cbar \rightarrow \Modbar$. In words, the compactification can be described as follows: the non-compactness of $\Mod$ arises from the fact that the points $Q_i$ are not allowed to be the same. Therefore, to ``approach" the boundary of $\Mod$ is to ``let some of the points $Q_i$ collide" (also see \cite[Chapter 3, G]{Harris}. The boundary divisors correspond to 2-set partitions of the $\{Q_1,\hdots, Q_d\} = S_1 \cup S_2$ with $|S_i|>1$. If the partition is not nice assume without loss of generality that $|S_2 \cap \{0,1,\infty\}| = 1$. Regardless of the niceness of $S_2$, one can approach the boundary divisor corresponding to $S_1 \cup S_2$ by allowing the points of $S_2$ to collide (in case the partition isn't nice, allow the $Q_i$, $i>3$ to move towards the remaining $Q_2 \in S_2$). The family of curves degenarates with the loop inducing this partition getting pinched to a point. 

With this description in hand, we prove the theorem. Look at the partition induced by $\gamma$. If $|S_2| = \{1\}$, then Katz's original theorem applies. Otherwise, the discussion above shows that $\gamma$ can be pinched off to a point by approaching the boundary divisor corresponding to the partition. Therefore, Theorem \ref{thm:tech} applies to the family $\Co \rightarrow \Mod$, and the result follows. 

\end{proof}

\subsection{The generic $d$-pointed curve}
Suppose that $\Mgd$ denotes the moduli space of curves with $d$-marked points. If $g \geq 2$ or $d \geq 2$, there exists an open subscheme $U \subset \Mgd$ over which the moduli problem is fine. In this case let $\Mgdbar$ be the compactification of $\Mgd$.
In case $g = d = 1$, we consider the Legendre family of elliptic curves $y^2 = x(x-1)(x- \lambda)$ over $\mathbb{P}^1 \setminus \{0,1,\infty\}$. In either case, $\mathcal{C}$ be the family of curves over either $\Mgd$ or $\mathbb{P}^1 \setminus \{0,1,\infty\}$, with the marked points removed.  We now prove Theorem \ref{genpts} using Theorem \ref{thm:tech}

\begin{proof}[Proof of Theorem \ref{genpts}]
As in the case of Theorem \ref{gen}, it suffices to prove the result for {\it the} generic family, or the Legendre family (depending on whether $(g,d) = (1,1)$ or not). The proof mainly consists of combining the arguments used to prove Theorems \ref{P1} and \ref{gen}. We will therefore content ourselves with simply sketching the salient points of the proof. 

The mapping class group of a genus one curve with a single puncture acts transitively on the set of isotopy classes of non-trivial separating simple closed loops, and on the set of isotopy classes of non-separating simple closed loops. Therefore, by choosing a appropriate path to $0, 1$ or $\infty$, every simple closed loop in a smooth fiber of the Legnedre family can by contracted to a point.

Therefore, we assume that $d>1$, or $g>1$. Let $s \in U(\C)$ and $\gamma \subset \mathcal{C}^{\an}_s$ be a simple closed loop. Supppose first that the image of $\gamma$ in the fundamental group of the compactified curve is trivial. This means that $\gamma$ is separating, and one of the connected components of $\mathcal{C}_s \setminus \gamma$ is homeomorphic to the complex unit disc with finitely many punctures given by $Q_1 \hdots Q_i$. If $i = 0$ then $\gamma$ is contractible. If $i = 1$, then Katz's theorem applies. If $i> 1$, then approach the boundary of $\Mgd$ by allowing the points $Q_1 \hdots Q_i$ to ``come together". This boundary component of $\Mgdbar$ is fine. The family of curves degenerates with $\gamma$ getting pinched to a point, and the result follows by applying Theorem \ref{thm:tech}.

Otherwise, we proceed as in the proof of Theorem \ref{gen}. The results about the transitivity of the fundamental group acting on the isotopy classes of simple closed loops still hold - i.e. the action on non-separating simple closed loops is transitive, and two separating loops are in the same orbit if and only if homeomorphism class of the components for the first loop is the same as those for the second loop. In case $\gamma$ is non-separating, we degenerate to the boundary divisor $D_{\irr}$ consisting of irreducible nodal $d$-pointed curves, which is generically fine. Using the same method as in Theorem \ref{gen}, we may apply Theorem \ref{thm:tech} to prove the result for non-separating loops. 

 If $\gamma$ is separating, we use the boundary divisors $D_{g_1,d_1,g_2,d_2}$ given by reducible nodal curves, with components being two irreducible smooth $d_i$ pointed curves with genera $g_i$, satisfying $d_1 + d_2 = d$ and $g_1 + g_2 = g$ and $g_1 \leq g_2$. Unless $g_1 = 1$ and $d_1 = 0$, the generic point of this divisor is fine and the result follows by applying Theorem \ref{thm:tech}. 

Finally, if $g_1 = 1$ and $d_1 = 0$, we pull back to an appropriate cover of $\Mgdbar$ until the moduli problem becomes fine just as in Theorem \ref{gen}, and then apply Theorem \ref{thm:tech} appropriately. 
\end{proof}

\AtEndDocument{{\footnotesize%
  \textit{E-mail address}: \texttt{ashankar@math.harvard.edu} \par
  \textsc{Department of Mathematics, Harvard University, Cambridge, MA 02138,} \par \textsc{USA}
}}

\end{document}